\newtheorem{thmintr}{Theorem}
\newtheorem{prop}{Proposition}[subsection]
\newtheorem{thm}[prop]{Theorem}
\newtheorem{lem}[prop]{Lemma}
\newtheorem{defn}[prop]{Definition}
\theoremstyle{definition}
\newtheorem{rem}[prop]{Remark}
\newtheorem{exwith}[prop]{Example}
\newtheorem*{ack}{Acknowledgement}
\newtheorem*{coi}{Conflict of interest}
\newtheorem{remintr}{Remark}
\def\co{\colon\thinspace}
\newcommand{\bfa}{\mathbf a}
\newcommand{\bfb}{\mathbf b}
\newcommand{\C}{\mathbb C}
\newcommand{\CC}{\mathcal C}
\newcommand{\rmd}{\mathrm d}
\newcommand{\D}{\mathbb D}
\newcommand{\rme}{\mathrm e}
\newcommand{\EE}{\mathcal E}
\newcommand{\GG}{\mathcal G}
\newcommand{\HH}{\mathcal H}
\newcommand{\rmi}{\mathrm i}
\newcommand{\JJ}{\mathcal J}
\newcommand{\KK}{\mathcal K}
\newcommand{\MM}{\mathcal M}
\newcommand{\N}{\mathbb N}
\newcommand{\NN}{\mathcal N}
\newcommand{\bfp}{\mathbf p}
\newcommand{\Q}{\mathbb Q}
\newcommand{\bfq}{\mathbf q}
\newcommand{\R}{\mathbb R}
\newcommand{\RR}{\mathcal R}
\renewcommand{\SS}{\mathcal S}
\newcommand{\UU}{\mathcal U}
\newcommand{\bfu}{\mathbf u}
\newcommand{\VV}{\mathcal V}
\newcommand{\WW}{\mathcal W}
\newcommand{\XX}{\mathcal X}
\newcommand{\Z}{\mathbb Z}
\newcommand{\ZZ}{\mathcal Z}
\newcommand{\DB}{\bar{\partial}}
\newcommand{\CR}[1]{\bar{\partial}_{#1}}
\newcommand{\lra}{\longrightarrow}
\newcommand{\ra}{\rightarrow}
\DeclareMathOperator{\cut}{\mathrm{cut}}
\DeclareMathOperator{\ev}{\mathrm{ev}}
\DeclareMathOperator{\Gl}{\mathrm{Gl}}
\DeclareMathOperator{\Hom}{\mathrm{Hom}}
\DeclareMathOperator{\id}{\mathrm{id}}
\DeclareMathOperator{\im}{\mathrm{Im}}
\DeclareMathOperator{\ind}{\mathrm{ind}}
\DeclareMathOperator{\Int}{\mathrm{Int}}
\DeclareMathOperator{\loc}{\mathrm{loc}}
\DeclareMathOperator{\ot}{\mathrm{ot}}
\DeclareMathOperator{\PSl}{\mathrm{PSl}}
\DeclareMathOperator{\PSO}{\mathrm{PSO}}
\DeclareMathOperator{\Sl}{\mathrm{Sl}}
\DeclareMathOperator{\SO}{\mathrm{SO}}
\newcommand{\boldxi}{\mbox{\boldmath $\xi$}}
\begin{document}

\author{Wolfgang Schmaltz, Stefan Suhr, and Kai Zehmisch}
\address{Fakult\"at f\"ur Mathematik, Ruhr-Universit\"at Bochum,
Universit\"atsstra{\ss}e 150, D-44801 Bochum, Germany}
\email{Wolfgang.Schmaltz@rub.de, Stefan.Suhr@rub.de, Kai.Zehmisch@rub.de}

\title[Non-fillability of overtwisted contact manifolds via polyfolds]
{Non-fillability of overtwisted contact manifolds via polyfolds}

\date{08.08.2024}

\begin{abstract}
  We prove that any weakly symplectically fillable contact manifold is tight.
  Furthermore we verify the strong Weinstein conjecture for contact manifolds
  that appear as the concave boundary of a directed symplectic cobordism
  whose positive boundary satisfies the weak-filling condition and is overtwisted.
  Similar results are obtained in the presence of bordered Legendrian open books
  whose binding--complement has vanishing second Stiefel--Whitney class.
  The results are obtained via polyfolds.
\end{abstract}

\subjclass[2010]{53D42; 53D40, 57R17, 53D45, 37J55, 34C25, 37C27}
\thanks{This research is supported by the {\it Institute for Advanced Study} (Princeton, NJ)
in 2015, by the {\it Mathematisches Forschungsinstitut Oberwolfach} in 2020, and is part of
a project in the SFB/TRR 191 {\it Symplectic Structures in Geometry, Algebra and Dynamics},
funded by the DFG}

\maketitle


\section{Introduction\label{sec:intro}}

In \cite{eli89} Eliashberg introduced a dichotomy of closed contact $3$-manifolds,
the tight and overtwisted contact structures.
He established in \cite{eli89} an $h$-principle
in the sense of Gromov \cite{gro86} for overtwisted contact structures.
The higher dimensional analogue
was defined by Borman--Eliashberg--Murphy \cite{bem15}.
One way to detect tight contact structures on a $3$-manifolds is
to find a weak symplectic filling.
In view of the filling--by--holomorphic--discs technique
such fillable contact manifolds cannot be overtwisted,
see \cite{eli90,gro85} and cf.\ \cite[Corollary 3.8]{gz13}.
In higher dimensions
obstructions to overtwistedness in terms of
semi-positive weak symplectic fillings
were obtained by Niederkr\"uger \cite{nie06} and
Massot--Niederkr{\"u}ger--Wendl \cite{mnw13}.
The aim of this work is to remove the assumption
of being semi-positive.

We consider not necessarily connected
$(2n-1)$-dimensional contact manifolds $(M,\xi)$ and assume
that there is a contact form $\alpha$ on $M$
defining $\xi$, i.e.\ $\xi$ is the kernel of $\alpha$.
The restriction of $\rmd\alpha$ to $\xi$ is a symplectic form
providing $\xi$ with the symplectic orientation via $(\rmd\alpha)^{n-1}$.
The contact manifold $(M,\xi)$ is oriented by $\alpha\wedge(\rmd\alpha)^{n-1}$.
These notions are independent of the choice of contact form
as long as the contact form equals $f\alpha$
for a positive smooth function $f$ on $M$.

A compact $2n$-dimensional symplectic manifold $(W,\Omega)$
provided with the symplectic orientation $\Omega^n$
is called a {\bf weak symplectic filling} of a given
$(2n-1)$-dimensional contact manifold $(M,\xi)$,
if $\partial W=M$ as oriented manifolds,
where $\partial W$ carries the boundary orientation,
such that the following condition is satisfied:
For all choices of positive contact forms $\alpha$ for $\xi$
the differential forms
\[
\alpha\wedge\omega^{n-1}
\quad\text{and}\quad
\alpha\wedge\big(\rmd\alpha+\omega\big)^{n-1}\,,
\quad\text{where}\quad
\omega:=\Omega|_{TM}\,,
\]
are positive volume forms on $M$,
see \cite{dg12,mnw13}.
Fixing a contact form $\alpha$ for $\xi$ the latter is equivalent to
\[
\alpha\wedge\big(f\rmd\alpha+\omega\big)^{n-1}>0
\]
for all non-negative smooth functions $f$ on $M$.
A contact manifold $(M,\xi)$ is {\bf weakly symplectically fillable}
if it admits a weak symplectic filling.

If $(M,\xi)$ contains an overtwisted disc,
then $(M,\xi)$ is called {\bf overtwisted};
otherwise $(M,\xi)$ is called {\bf tight},
see \cite{bem15} and Section \ref{subsec:overtwistedness}.

\begin{thmintr}
 \label{fillableimpliestight}
 Any weakly symplectically fillable contact manifold is tight.
\end{thmintr}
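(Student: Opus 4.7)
The plan is to argue by contradiction. Suppose $(M,\xi)$ admits a weak filling $(W,\Omega)$ and, at the same time, contains an overtwisted disc $\Delta\subset M$ in the sense of Borman--Eliashberg--Murphy. Following the filling--by--holomorphic--discs scheme that goes back to Gromov \cite{gro85} and Eliashberg \cite{eli90} in dimension three and was generalised to higher dimensions by Niederkr\"uger \cite{nie06} and Massot--Niederkr\"uger--Wendl \cite{mnw13} under a semi-positivity assumption, I would produce a moduli space $\MM$ of $J$-holomorphic discs in the symplectic completion $\widehat{W}$ whose compactification is an oriented compact $1$-dimensional manifold with exactly one boundary point, contradicting the parity of ends of a compact $1$-manifold.

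First I would fix a contact form $\alpha$ for $\xi$ and, using the weak filling condition
\[
\alpha\wedge(f\,\rmd\alpha+\omega)^{n-1}>0\quad\text{for all }f\geq 0,
\]
modify $\Omega$ near $M$ so as to obtain a symplectic form that is cylindrical in a collar of $M$ while remaining compatible with $\omega$ on $TM$. I would then choose an almost complex structure $J$ on $\widehat{W}$ which is tamed by this modified form, cylindrical at infinity, and for which the family of meridional discs in the overtwisted disc $\Delta$ is totally real. Close to the centre of $\Delta$ one has the classical Bishop family of small embedded $J$-holomorphic discs whose boundaries foliate a neighbourhood of the singular locus. Let $\MM$ denote the connected component of the moduli space of unparametrised $J$-holomorphic discs with boundary on this family that contains the Bishop family. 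The weak-filling positivity provides a uniform energy bound and rules out boundary escape by the usual maximum-principle argument, so that every sequence in $\MM$ has a Gromov-convergent subsequence to a stable nodal configuration.

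The technical heart, where polyfolds enter, is to promote $\MM$ to a smooth oriented cobordism of dimension one without assuming semi-positivity. The moduli problem of discs with totally real boundary on $\Delta$ inside a weak symplectic cobordism fits into the polyfold framework of Hofer--Wysocki--Zehnder as an sc-Fredholm section of a strong polyfold bundle of Fredholm index one. The abstract perturbation theorem then provides an sc-smooth multisection $p$ such that the perturbed moduli space $\MM^{p}$ is an oriented compact topological manifold with boundary. Crucially, this holds without any semi-positivity hypothesis, because the abstract polyfold perturbations are local in the ambient polyfold and do not rely on classical transversality of somewhere-injective curves versus multiply covered spheres.

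The hard part, which I expect to be the main obstacle, is to show that after perturbation the only boundary stratum of $\MM^{p}$ is the Bishop end, so that $\MM^{p}$ becomes a compact $1$-manifold with a single boundary point -- the desired contradiction. This requires two ingredients. First, one must identify the Bishop family with a half-open collar of $\MM^{p}$ near the singular locus of $\Delta$, surviving the polyfold perturbation by a uniqueness argument at sufficiently small energy and hence contributing a single, odd boundary point. Second, one must show that every nodal degeneration arising in the Gromov compactification -- disc bubbles, sphere bubbles, and SFT-type breaking at infinity in $\widehat{W}$ -- either sits in positive polyfold-virtual codimension and is therefore avoided by a generic perturbation, or is excluded altogether by the weak-filling positivity of $\alpha\wedge(f\,\rmd\alpha+\omega)^{n-1}$ together with the total-reality of the boundary condition. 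The second item is precisely where the polyfold machinery by-passes the semi-positivity hypothesis of \cite{nie06,mnw13}, and once it is in place the theorem follows from the usual parity obstruction.
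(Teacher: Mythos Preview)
Your overall scheme---contradiction via a Bishop family of holomorphic discs, Gromov compactness, polyfold perturbation to a compact oriented $1$-manifold with one boundary point---is exactly the paper's strategy. But several steps are missing or mis-stated, and without them the argument does not go through.

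\textbf{Reduction to a plastikstufe.} You work directly with ``the overtwisted disc $\Delta$'' of Borman--Eliashberg--Murphy and speak of Bishop discs near its centre. In dimension three this is fine, but in higher dimensions the BEM overtwisted disc is not the object on which the Bishop filling runs. The paper first invokes the BEM theorem (their Theorem~\ref{thm:bemthmonplastik}) that any overtwisted $(M,\xi)$ contains a \emph{small plastikstufe} with stably parallelisable core $Q$; this is the bordered Legendrian open book $N$ that supports the local Bishop family, the semi-global uniqueness near the binding, and the blocking lemma near $\partial N$. Without this reduction you have no totally real boundary condition with the needed geometry.

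\textbf{Disc bubbles are excluded topologically, not by codimension.} You say disc bubbles ``sit in positive polyfold-virtual codimension and are therefore avoided by a generic perturbation.'' That is not how the paper proceeds, and it is not clear your claim is true. The paper observes (Remark~\ref{rem:gromovcomp}) that every non-constant disc component has \emph{positive} winding number by the Hopf boundary lemma, while the total winding number of a Gromov limit is $1$; hence any limit has \emph{exactly one} disc component, which is simple. This is what justifies restricting the entire polyfold construction to \emph{boundary un-noded} nodal discs (Sections~\ref{sec:adelignemumfordtypespace}--\ref{sec:polyfoldperturbations}). If you allowed boundary nodes you would face a genuinely different and harder polyfold problem.

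\textbf{Orientation requires the spin condition.} To orient the Cauchy--Riemann section (and hence the perturbed solution set) the paper needs either $w_2(TN^*)=0$ or a relative spin structure (Theorem~\ref{thm:maindirectsyplcoborthm}(ii), Remarks~\ref{rem:homotopunitrivw2=0}--\ref{rem:crisorienbyrelspinstr}). This is precisely why one takes the plastikstufe core $Q$ stably parallelisable: then the pages $I_\theta\times Q$ are spin. You do not mention this, and without it your ``oriented compact $1$-manifold'' need not be oriented.

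\textbf{No SFT breaking in the filling case.} For Theorem~\ref{fillableimpliestight} one has $M_-=\emptyset$, so the completion $\hat W$ has no negative cylindrical end and ``SFT-type breaking at infinity'' simply does not occur. Your listing it among the degenerations to be excluded suggests you are conflating the filling statement with the cobordism statement (Theorem~\ref{strongWConnegend}); in the latter, breaking is exactly what produces the Reeb link, and one argues by contradiction that \emph{absence} of periodic orbits would give $C^0$-bounds.

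In short: the architecture is right and matches the paper, but you must (a) pass from overtwistedness to a small plastikstufe with spin core via BEM, (b) rule out boundary disc bubbling by the winding-number argument before building the polyfold, and (c) use the spin hypothesis to orient the Fredholm section.
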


Potentially, Theorem \ref{fillableimpliestight} can be obtained
with Pardon's \cite{par19} rigorously defined contact homology.
An argument is indicated in Remark \ref{remintr:contacthomology}
below.
We will prove Theorem \ref{fillableimpliestight}
along the classical line of reasoning
due to Gromov \cite{gro85} and Eliashberg \cite{eli90},
cf.\ \cite{zehm03,nie06,mnw13}.
In fact, Theorem \ref{fillableimpliestight},
will follow as a special case of Theorem \ref{thm:maindirectsyplcoborthm} (ii).
For that we remark, that a contact manifold,
which contains an overtwisted disc,
also contains a parallelisable small plastikstufe
whose core is a torus,
see the discussion in Section \ref{subsec:overtwistedness}
and Theorem \ref{thm:bemthmonplastik}.
A plastikstufe is an example of a
bordered Legendrian open book
such that the book fibration is trivial,
the page is a product of an interval with the binding
and the binding is the core,
see Section \ref{subsec:legopenbooks}.
Whenever a bordered Legendrian open book
in an ambient closed contact manifold is {\bf small},
i.e.\ has a contractible neighbourhood,
it was shown in \cite[Theorem 4.4]{mnw13}
that no {\it semi-positive} weak symplectic filling can exists.
The way in which the theorem is formulated suggests
the conjecture that the statement should be true
even without the assumption of semi-positivity.
Here we prove:

\begin{thmintr}
 \label{fillableimpliesnospinblob}
 A contact manifold is not weakly symplectically fillable
 provided that it contains
 a small bordered Legendrian open book
 such that the complement of the binding has
 vanishing second Stiefel--Whitney class.
\end{thmintr}

Theorem \ref{fillableimpliesnospinblob} implies
Theorem \ref{fillableimpliestight} and directly follows from
Theorem \ref{thm:maindirectsyplcoborthm} (ii).
The examples of small bordered Legendrian open books
given in \cite[Proposition 5.9]{mnw13} all have
vanishing second Stiefel--Whitney class
though they are sometimes not orientable and, hence, are not spin,
see Example \ref{ex:kleinbottleexample}.
This leaves the question,
whether there are contact manifolds
that are (i) weakly symplectically fillable
(and therefore tight with Theorem \ref{fillableimpliestight})
and
(ii) that admit a bordered Legendrian open book,
whose binding--complement could be orientable, but is not spin.
Note that non of the potential weak symplectic fillings can be semi-positive. 

Restricting to weak symplectic fillings that are semi-positive for a moment
Theorem \ref{fillableimpliesnospinblob} holds true also
if the second Stiefel--Whitney class does not vanish.
The reason is, that a compact $1$-dimensional manifold
has an even number of boundary components
which is used in a typical Gromov--Witten--invariant type argument
performed in a potential weak symplectic filling.
Taking holomorphic discs with boundary
on the bordered Legendrian open book
that intersect a given path connecting
the binding with the boundary inside a page
yields a $1$-dimensional moduli space.
At the end of the path on the binding
there is a foliation by boundary circles of Bishop discs;
at the other end, which corresponds to the boundary of the page,
no holomorphic disc homotopic
to a Bishop disc does exist.
After perturbing the almost complex structure
no bubbling off takes place
for the relevant moduli space in a semi-positive setting.
In other words, the $1$-dimensional moduli space
is compact with an odd number of boundary components.
This is not possible thus contradicts the presence of a weak symplectic filling.

Note that this is in contrast to the $1$-dimensional branched manifolds
that appear in the non semi-positive setting.
Namely, in general, the solution space
of a perturbed Cauchy--Riemann operator branches,
because of the need of multisections
near nodal holomorphic discs
with multiply covered sphere bubbles of negative first Chern number.
The vanishing assumption for the second Stiefel--Whitney class
in the non semi-positive setting
allows to orient the solution space
(see Remark \ref{rem:homotopunitrivw2=0})
resulting in an oriented compact
$1$-dimensional weighted branched manifold,
which has an even number of boundary components,
i.e.\ yields the desired contradiction to weak symplectic fillability.
We remark, that moduli spaces of holomorphic discs
in general are not orientable in contrast to the case of spheres,
see \cite{fooo09b}.

As a consequence of Theorem \ref{fillableimpliesnospinblob}
we can verify the conjecture stated in \cite[Theorem 5.13 (a)]{mnw13}
involving circular contactisations of Liouville domains also
called {\bf Giroux domaines}:

\begin{thmintr}
 \label{fillableimpliesnodoublegd}
 A contact manifold is not weakly symplectically fillable
 provided that it contains a domain that is obtained from a Giroux domain
 with disconnected boundary,
 where one boundary component is blown down via a contact cut.
\end{thmintr}

The construction in Example \ref{ex:kleinbottleexample}
yields a small bordered Legendrian open book
whose binding--complement has trivial second Stiefel--Whitney class.
Hence, Theorem \ref{fillableimpliesnodoublegd} follows from
Theorem \ref{fillableimpliesnospinblob}.

Theorem \ref{thm:maindirectsyplcoborthm} also verifies instances
of the Weinstein conjecture,
which asks for the existence of periodic Reeb orbits
for all closed contact manifolds, see \cite{wein79}.
For a short historical review see \cite[Section 1]{dgz14}.
Our approach, besides the usage of polyfolds,
is based on the work of Hofer \cite{hofe93}, Albers--Hofer \cite{ah09}
and Niederkr{\"u}ger--Rechtman \cite{nr11},
and yields so-called Reeb links:
A {\bf Reeb link} is a finite collection of parametrised periodic Reeb orbits
each of which is oriented by the corresponding Reeb vector field
and possibly multiply covered, see \cite{ach05}.
A Reeb link is called {\bf null-homologous} if the link components counted multiplicity
add up to zero in homology.
The {\bf strong Weinstein conjecture}
as formulated by Abbas--Cieliebak--Hofer in \cite{ach05}
asserts the existence of a null-homologous
Reeb link for all contact forms on all closed contact manifolds.

\begin{thmintr}
 \label{strongWConnegend}
 The strong Weinstein conjecture holds true
 for all contact manifolds
 that appear as the concave boundary
 of a directed symplectic cobordism
 whose positive end satisfies the weak-filling condition and
 that are at least one of the following:
 \begin{enumerate}
 \item [(i)]
  an overtwisted contact manifold,
 \item [(ii)]
  a contact manifold that
  contains a small bordered Legendrian open book
  such that the complement of the binding has
  vanishing second Stiefel--Whitney class,
 \item [(iii)]
  a contact manifold that
  contains a domain that is obtained from a Giroux domain
  with disconnected boundary,
  where one boundary component is blown down via a contact cut.
 \end{enumerate}
\end{thmintr}

The relevant notions related to directed symplectic cobordisms
can be found in Section \ref{subsec:adirsymplcobor}.
Theorem \ref{strongWConnegend} follows from 
Theorem \ref{thm:maindirectsyplcoborthm}
together with the remarks made for 
Theorem \ref{fillableimpliestight} and
\ref{fillableimpliesnodoublegd} above.

For the proof of Theorem \ref{thm:maindirectsyplcoborthm},
which implies Theorem \ref{fillableimpliesnospinblob} and \ref{strongWConnegend},
we will use the following alternative characterisation of 
weak symplectic fillability from \cite{mnw13}:
A compact symplectic manifold $(W,\Omega)$ is a weak symplectic filling
of a contact manifold $(M,\xi=\ker\alpha)$ if $\partial W=M$ as oriented manifolds
and if there exists an $\Omega$-tamed almost complex structure $J$ on $W$
such that $\xi$ is $J$-invariant and the restriction of $\rmd\alpha$ to $\xi$ tames $J$.
In this situation $(W,\Omega,J)$ is called a {\bf tamed pseudo-convex}
manifold, see \cite{eli90} and cf.\ Section \ref{sec:tamedpseidpconv}.
This point of view allows the use of holomorphic disc fillings in the sense of Bishop,
see Section \ref{sec:holomdiscs}.

It turns out that fillability questions can be perfectly described
in the language of symplectic cobordisms,
see Section \ref{sec:symplcobord}.
Assuming non-existence of Reeb links of the Reeb flows
that appear on the negative ends of the symplectic cobordism
the Gromov--Witten--invariant type polyfolds
can be defined in the sense of Hofer--Wysocki--Zehnder
\cite{hwz98,hwz03,hwz-I07,hwz-II09,hwz-III09,hwz-int10,hwz-dm12,hwz-gw17}.
This was observed in \cite{sz17} in the context of holomorphic spheres.
Necessary modifications for the usage of holomorphic discs instead
are worked out in Section \ref{sec:adelignemumfordtypespace} and
\ref{sec:polyfoldperturbations}.
Special attention we pay to orientability questions.
Similar to the polyfold version of the Deligne--Mumford space
we introduce a Riemann moduli space of boundary un-noded stable discs
with $3$ ordered boundary marked points
in Section \ref{sec:adelignemumfordtypespace}.
In Section \ref{sec:polyfoldperturbations}
we define the relevant polyfold of stable boundary un-noded disc maps
motivated by the absence of boundary disc bubbling
in the Gromov compactification of the
appearing moduli space of holomorphic discs.

\begin{remintr}
\label{remintr:contacthomology}
Contact homology, as a formal concept, was introduced
by Eliashberg--Givental--Hofer in \cite{egh00}
as contact-manifold-invariant having functorial properties.
Symplectic cobordisms,
which are directed from the negative to the positive end,
induce structure preserving maps (e.g.\ unital) from the contact homology
at the positive end to the contact homology at the negative end.
To incorporate non-exact cobordisms and weak-filling boundary conditions
a change of coefficients to a Novikov completion of the group ring
of the second homology (or an adapted quotient thereof)
of the symplectic cobordism resp.\ contact manifold
is necessary by compactness reasons,
see Bourgeois--van Koert \cite[Section 1.1]{bvk10},
Latschev--Wendl \cite[Section 2]{lw11},
and Niederkr{\"u}ger--Wendl \cite[Section 2.5]{nw11}.
Applying this, one gets that
a contact manifold with vanishing contact homology
(i.e.\ with $1$ being a boundary)
cannot be weakly symplectically fillable
(i.e.\ symplectically null-cobordant with empty negative end),
as the contact homology of the empty contact manifold equals the coefficient ring
and, therefore, never vanishes meaning $1\neq0$.

Combining Casals--Murphy--Presas' \cite[Theorem 1.1]{cmp19} and
Bourgeois--van Koert's \cite[Theorem 1.3]{bvk10} shows
that on every overtwisted contact manifold
there exists a non-degenerate, defining contact form
that admits a periodic Reeb orbit
that bounds precisely one finite energy plane,
which additionally is Fredholm regular,
implying the vanishing of the contact homology.
As Pardon rigorously defined contact homology in \cite{par19},
this implies, as stated on \cite[p.~835/6]{par19}, the vanishing of the contact homology
of overtwisted contact manifolds.
Furthermore, after reworking \cite{par19} with group ring coefficients,
this yields symplectic non-fillability even in the weak sense,
i.e.\ Theorem \ref{fillableimpliestight}.

Similarly and removing the word `strong',
part (i) of Theorem \ref{strongWConnegend}
could follow along the same line of reasoning
because the vanishing of the contact homology 
implies the Weinstein conjecture for the underlying contact manifold.
To obtain the strong Weinstein conjecture
as verified in part (i) of Theorem \ref{strongWConnegend}
one could argue as above in the case of non-degenerate contact forms.
In order to handle degenerate contact forms one could use
an approximation argument as in \cite[Section 6]{sz17}.
A filtered version of contact homology might yield the required energy, resp.,
action bounds.

On \cite[p.~836]{par19} Pardon addresses the vanishing of contact homology
in the presence of a small bordered Legendrian open book
based on an idea of Bourgeois--Niederkr{\"u}ger \cite[p.\ 69]{bou09}.
It would be of interest,
whether Pardon's approach to
broken homolorphic discs with boundary
via orientation local systems could remove
the assumption on the second Stiefel--Whitney class,
which we made in Theorem \ref{fillableimpliesnospinblob} and
in part (ii) of Theorem \ref{strongWConnegend}
in order to make our approach via boundary un-noded
stable holomorphic discs feasible.
The Deligne--Mumford space elaborated
in Section \ref{sec:adelignemumfordtypespace}
might represent a first step in this direction.
\end{remintr}


\section{Singular Legendrian foliations}
\label{sec:singlegfol}


\subsection{Legendrian open books}
\label{subsec:legopenbooks}

Following \cite{mnw13,nie13} we define:

\begin{defn}
\label{defn:relativeOB}
A {\bf relative open book decomposition}
$(B,\vartheta)$ of a connected manifold $N$
with boundary $\partial N$ consists of
  \begin{itemize}
    \item
    a non-empty codimension $2$ submanifold $B$
    of $\Int N$, called the {\bf binding},
    \item
    and a smooth, locally trivial fibration
    $\vartheta\co N\setminus B\ra S^1$,
    whose fibres $\vartheta^{-1}(\theta)$,
    $\theta\in S^1$, are called the {\bf pages},
  \end{itemize}
such that the following conditions are satisfied:
  \begin{itemize}
    \item[(i)]
    All pages intersect $\partial N$ transversally.
    \item[(ii)]
    The binding $B$ has a
    trivial tubular neighbourhood $B\times D^2$ in $N$
    in which $\vartheta$ is given by the angular coordinate
    in the $D^2$-factor.
  \end{itemize}
\end{defn}

\noindent
The pages $\vartheta^{-1}(\theta)$ in $N\setminus B$
are co-oriented by the orientation of $S^1$,
i.e.\ the linearisation $T\vartheta$ maps positive normal vectors
to positive tangent vectors of $S^1$.

As in \cite[Section 4]{mnw13} and 
\cite[Section I.4]{nie13} we define:

\begin{defn}
\label{defn:bLOB}
  A connected compact $n$-dimensional submanifold $N$
  with boundary $\partial N$ 
  of a $(2n-1)$-dimensional contact manifold $(M,\xi)$
  carries a
  {\bf bordered Legendrian open book} $(B,\vartheta)$
  if $(B,\vartheta)$ is a relative open book decomposition of $N$
  such that
  \begin{itemize}
    \item
    the pages of $(B,\vartheta)$
    are Legendrian submanifolds of $(M,\xi)$ and
    \item
    the {\bf singular set} of $N\subset(M,\xi)$,
    i.e.\ the set of all points $p\in N$
    such that $T_pN\subset\xi_p$,
    is equal to $B\cup\partial N$.
  \end{itemize}
\end{defn}

\noindent
In particular, the binding of a bordered Legendrian open book
is an isotropic submanifold of $(M,\xi)$; the boundary $\partial N$
is Legendrian.
The complement
\[
N^*:=N\setminus\big(B\cup\partial N\big)
\]
of $B\cup\partial N$ in $N$
is the set of {\bf regular points} of $N\subset(M,\xi)$,
i.e.\ the set of all points $p\in N$
such that $T_pN$ and $\xi_p$ intersect transversally.
The {\bf characteristic distribution} $TN^*\cap\xi$ integrates by
the Frobenius theorem to the so-called {\bf characteristic foliation} on $N^*$.
The {\bf characteristic leaves}, which by definition
are the leaves of the characteristic foliation, coincide with the pages
of the open book $(B,\vartheta)$.

If, in addition, $\xi$ is co-oriented,
then $\xi|_{N^*}$ puts a co-orientation
to the pages $\vartheta^{-1}(\theta)$ in $N\setminus B$.
We will assume that this co-orientation coincides with
the co-orientation induced by $\vartheta$
by possibly composing $\vartheta$
with a reflection on $S^1=\partial D$.

It follows from
\cite[Theorem I.1.3]{nie13}
or \cite[Theorem 1.4]{hua15}
that the germ of a contact structure $(M,\xi)$ is unique
near a submanifold $N\subset(M,\xi)$
(with boundary $\partial N$)
that carries a bordered Legendrian open book $(B,\vartheta)$.
The germ is uniquely determined
by the {\bf singular characteristic distribution} $\xi\cap TN$
given by the open book decomposition on $N$
determined by $(B,\vartheta)$.

A bordered Legendrian open book $(B,\vartheta)$
is called {\bf small} if the supporting submanifold
$N\subset(M,\xi)$ is contained in a ball inside $M$.

\begin{exwith} {\bf (A non-spin bordered Legendrian open book)}
\label{ex:kleinbottleexample}
  In \cite[Proposition 5.9]{mnw13} examples of contact manifolds $(M,\xi)$
  are constructed that contain a submanifold $N$,
  which carries a small bordered Legendrian open book.
  Some of the in \cite[Proposition 5.9]{mnw13}
  constructed examples are indeed non-spin.
  In order to see this, we repeat the essential construction steps here.
    
  The construction starts with a cylindrical Lagrangian submanifold $L$
  of an ideal Liouville domain $V$ with disconnected boundary
  $\partial V=\partial_+ V\cup\partial_- V$
  (see \cite[Theorem C]{mnw13})
  such that $L$ has disconnected boundary
  $\partial L=\partial_+ L\cup\partial_- L$
  with $\partial_{\pm} L\subset\partial_{\pm} V$.
  A perturbation of $L\times S^1$ inside the interior of the
  circular contactisation $V\times S^1$ of the ideal Liouville domain $V$
  -- a so-called Giroux domain --
  followed by a contact cut along $\partial_-V\times S^1$
  (see \cite[Section 5.1]{mnw13}), say,
  yields a bordered Legendrian open book $N$.
  Gluing a Giroux domain along $\partial_+V\times S^1$ with \cite[Lemma 5.1]{mnw13}
  and eventually cutting remaining boundary components
  yields a contact embedding of $N$ into a closed contact manifold $(M,\xi)$. 
  
  In the process, $L$ is the result of Polterovich surgery (see \cite{pol91}) along,
  say, two transverse intersection points of a Hamiltonian deformation of
  two boundary parallel Lagrangian discs.
  If the dimension of $L$ is even,
  then the Polterovich surgery result $L$ necessarily is homotopy equivalent
  to a $n$-dimensional Klein bottle with two points removed,
  see \cite[Paragraph 7]{pol91}.
  Therefore, $N^*$ is not orientable with $w_2(N^*)=0$.
  If the dimension of $L$ is odd,
  one can choose orientations such that $L$
  is homotopy equivalent
  to a $n$-dimensional Klein bottle
  or to $S^1\times S^{n-1}$ each time with two points removed.
\end{exwith}


\subsection{Overtwistedness}
\label{subsec:overtwistedness}

A $(2n-1)$-dimensional contact manifold $(M,\xi)$
is called {\bf overtwisted},
if $(M,\xi)$ contains an overtwisted disc,
see \cite{bem15}.
For example 
$\R^3$ equipped with the contact structure
$\ker\alpha_{\ot}$,
\[
\alpha_{\ot}:=\cos r\,\rmd z+r\sin r\,\rmd\theta\;,
\]
is overtwisted,
as $D_{\ot}^2:=\{z=0,r\leq\pi\}$
is an overtwisted disc.

For a $(n-2)$-dimensional
closed smooth manifold $Q$
we consider the contact manifold
$\R^3\times T^*Q$ equipped with contact structure
$\xi_Q:=\ker(\alpha_{\ot}+\lambda_{T^*Q})$,
where we identify $Q$
with the zero section in $T^*Q$
and denote the Liouville $1$-form of
$T^*Q$ by $\lambda_{T^*Q}$.
Following \cite[Section 10]{bem15}
we define the {\bf model plastikstufe with core} $Q$
to be the subset
$P_Q:=D_{\ot}^2\times Q$ of $\big(\R^3\times T^*Q,\xi_Q\big)$.

We will say that $(P_Q,\xi_Q)$
admits a contact embedding into a
$(2n-1)$-dimensional contact manifold $(M,\xi)$
if a neighbourhood of $P_Q$
in $\big(\R^3\times T^*Q,\xi_Q\big)$ does.
In this case
the image $N$ of the model $P_Q$
is called a {\bf plastikstufe with core} $Q$
and
carries the structure of a bordered Legendrian open book
with binding $Q$ and
pages corresponding to $I_{\theta}\times Q$,
where $I_{\theta}$ is the straight line segment
in $D_{\ot}^2\subset\R^2$
connecting $0$ and $\pi\rme^{\rmi\theta}$.
For the original definition of a plastikstufe
and the relation to
bordered Legendrian open books
we refer to \cite{nie06,nie13}.

\begin{thm}
  [Borman--Eliashberg--Murphy \cite{bem15}]
  \label{thm:bemthmonplastik}
  Let $Q$ be a $(n-2)$-dimensional
  closed smooth manifold,
  whose complexified tangent bundle is trivial.
  Then any $(2n-1)$-dimensional
  overtwisted contact manifold
  admits a small plastikstufe with core $Q$.
\end{thm}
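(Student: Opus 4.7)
The plan is to apply the $h$-principle of Borman--Eliashberg--Murphy \cite{bem15}, which implies that any two overtwisted contact structures in the same formal homotopy class of almost contact structures are contact isotopic. Consequently, if one can construct a model Darboux ball $(B^{2n-1},\xi_{\mathrm{mod}})$ that is overtwisted, contains an embedded plastikstufe with core $Q$, and whose underlying almost contact structure is the standard one, then such a ball can be inserted into any overtwisted $(M,\xi)$, producing a small plastikstufe with core $Q$ inside $M$.

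For the model construction I start from the reference contact manifold $(\R^3\times T^*Q,\xi_Q)$ and the standard subset $P_Q=D_{\ot}^2\times Q$. The hypothesis that $T_\C Q=TQ\otimes\C$ is trivial implies that $T^*Q$, viewed as a symplectic (equivalently, complex) vector bundle over $Q$, is trivializable. Fixing such a trivialization, a tubular neighbourhood of $P_Q$ in $\R^3\times T^*Q$ is contactomorphic to a neighbourhood of $P_Q$ inside $\R^3\times\C^{n-2}$ equipped with the contact form $\alpha_{\ot}+\lamst$, where $\lamst$ denotes the standard Liouville form on $\C^{n-2}\cong T^*\R^{n-2}$. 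Truncating and applying a contracting contact isotopy in both factors places the whole neighbourhood inside a single Darboux ball $(B,\xi_{\mathrm{mod}})\subset(\R^{2n-1},\xist)$. By construction $(B,\xi_{\mathrm{mod}})$ contains the overtwisted disc $D_{\ot}^2\times\{q\}$ for every $q\in Q$ and, since $B$ is contractible, its ambient almost contact structure is automatically homotopic to the standard one.

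The final step uses the $h$-principle of \cite{bem15}: fix a small Darboux ball $D\subset M$, replace $\xi|_D$ by a rescaled copy of $\xi_{\mathrm{mod}}$ to obtain a new contact structure $\xi'$ on $M$. Both $\xi$ and $\xi'$ are overtwisted and formally homotopic rel the complement of $D$, so by the uniqueness part of the $h$-principle they are contact isotopic; the plastikstufe built into $\xi'$ then survives the isotopy and becomes a small plastikstufe with core $Q$ in $(M,\xi)$. The main obstacle in this argument is the model step: without the triviality of $T^*Q$ as a complex bundle one cannot embed the standard neighbourhood of $P_Q$ contactomorphically into a Darboux ball, because of non-vanishing characteristic class obstructions on the conformal symplectic normal bundle of the binding $Q$. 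The hypothesis on $T_\C Q$ is exactly what eliminates these obstructions and makes the $h$-principle applicable to the constructed model.
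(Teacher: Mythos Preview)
The paper does not give its own proof of this statement; it is quoted as a result of Borman--Eliashberg--Murphy \cite[Section~10]{bem15} and used as a black box. Your overall architecture---build an overtwisted model containing $P_Q$ whose underlying almost contact structure is standard, then invoke the $h$-principle of \cite{bem15} to transplant it into $(M,\xi)$---is indeed the strategy used there.

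There is, however, a genuine gap in your model step. From the triviality of $T_\C Q$ you deduce that a neighbourhood of $P_Q$ in $(\R^3\times T^*Q,\xi_Q)$ is \emph{contactomorphic} to a neighbourhood of $P_Q$ inside $(\R^3\times\C^{n-2},\ker(\alpha_{\ot}+\lamst))$. Bundle triviality does not yield this. What it gives is that the conformal symplectic bundle $(\xi_Q,\rmd(\alpha_{\ot}+\lambda_{T^*Q}))$ restricted to the core $Q$ is isomorphic to $\underline{\C}\oplus T_\C Q\cong\underline{\C}^{\,n-1}$, i.e.\ the \emph{almost} contact structure on the neighbourhood (which retracts onto $Q$) is trivial. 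To obtain the contactomorphism you assert, you would need a Lagrangian \emph{embedding} $Q\hookrightarrow\C^{n-2}$ and then Weinstein's neighbourhood theorem; the hypothesis $T_\C Q$ trivial only produces a Lagrangian \emph{immersion} (Gromov--Lees), and Lagrangian embeddings into $\C^{n-2}$ are obstructed in general. Without this embedding your sentence ``places the whole neighbourhood inside a single Darboux ball $(B,\xi_{\mathrm{mod}})\subset(\R^{2n-1},\xist)$'' is unjustified, and the subsequent replacement step does not produce an honest contact structure $\xi'$ on $M$.

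The correct argument bypasses this by working at the formal level one step earlier. Embed $P_Q$ smoothly into a ball $B\subset M$; triviality of $T_\C Q$ guarantees that the germ of almost contact structure determined by $\xi_Q$ along $P_Q$ extends to an almost contact structure on $B$ that is standard near $\partial B$. Now apply the \emph{existence} part of the $h$-principle of \cite{bem15} relative to a slightly smaller neighbourhood of $P_Q$ (on which the structure is already genuinely contact and overtwisted) to obtain a contact structure $\xi_{\mathrm{mod}}$ on $B$ containing $P_Q$. Only then does the uniqueness part of the $h$-principle, applied as in your final paragraph, finish the proof. In short: replace the unjustified contactomorphism by an almost-contact homotopy, and invoke the $h$-principle once more to promote it to a genuine contact structure on the ball.
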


The assumption on $Q$ is satisfied
for any stably parallelisable manifold $Q$,
cf.\ \cite[Section 1.1]{alp94}.
For the converse of Theorem \ref{thm:bemthmonplastik} we note:

\begin{thm}
  [Huang \cite{hua17}]
  \label{thm:huangthmonplastik}
  If a contact manifold $(M,\xi)$ contains a plastikstufe,
  then $(M,\xi)$ is overtwisted.
\end{thm}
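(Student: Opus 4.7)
The plan is to construct, inside any neighbourhood of the plastikstufe, a BEM overtwisted disc in the ambient $(2n-1)$-dimensional manifold. Let $N \subset (M,\xi)$ denote the plastikstufe with core $Q$. The first step is a reduction to the standard model: by the germ uniqueness statements quoted in Section \ref{subsec:legopenbooks}, the contact germ of $\xi$ along $N$ is determined by its singular characteristic distribution $\xi \cap TN$, and is therefore contactomorphic to the germ of $\xi_Q = \ker(\alpha_{\ot} + \lambda_{T^*Q})$ along $P_Q = D_{\ot}^2 \times Q \subset \R^3 \times T^*Q$. It thus suffices to exhibit an overtwisted disc inside an arbitrary neighbourhood of $P_Q$ in this standard model.

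For the main construction I would work in a Darboux ball around a point $(x_0,q_0) \in D_{\ot}^2 \times Q$ with $x_0$ in the interior of $D_{\ot}^2$. Picking cotangent coordinates $(q_i,p_i)_{i=1}^{n-2}$ on $T^*Q$ near $q_0$ so that $\lambda_{T^*Q} = \sum p_i\,\rmd q_i$, the contact form becomes
\[
\cos r\,\rmd z + r\sin r\,\rmd\theta + \sum_{i=1}^{n-2} p_i\,\rmd q_i\,.
\]
This is the product of the $3$-dimensional overtwisted chart and a Darboux chart for standard contact $\R^{2n-4}$, and inside such a product one can realise the BEM overtwisted disc model: one assembles a $(2n-2)$-ball by combining the two-dimensional singular disc $D_{\ot}^2$ with a Lagrangian filling in the $(q,p)$-directions, chosen so that its characteristic foliation matches the one prescribed by BEM. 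A parametric Moser argument on a shell around the resulting disc should then put the contact form into the BEM normal form.

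The main obstacle is this last step, producing a contact embedding of the entire BEM shell rather than merely the singular disc itself. The BEM definition specifies not only a singular Legendrian foliation on a distinguished disc but also intricate transversality and tangency data on a neighbourhood, and matching these requires a parametric contact normal form along a singular submanifold going beyond the germ uniqueness theorems cited in Section \ref{subsec:legopenbooks}. A conceivable shortcut is to invoke the BEM $h$-principle and argue that $(M,\xi)$ must lie in the overtwisted isotopy class of its formal homotopy class, but making this rigorous appears to reduce back to the same normal form analysis.
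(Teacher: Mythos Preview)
The paper does not prove this theorem at all; it is quoted from Huang \cite{hua17} as an external result, with the remark that a forerunner version appeared in \cite{cmp19}. There is nothing to compare your proposal against.

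That said, your sketch has a genuine gap, which you yourself identify: producing the full BEM overtwisted shell from the plastikstufe germ. The BEM model is not characterised by a singular Legendrian foliation on a single disc but by a specific contact germ on a $(2n-1)$-ball, and the germ uniqueness results from Section~\ref{subsec:legopenbooks} only control the contact structure near the $n$-dimensional submanifold $N$, not on a full-dimensional shell. Your proposed ``parametric Moser argument on a shell'' is exactly the hard step, and it does not follow from anything available in this paper. Huang's actual argument in \cite{hua17} proceeds quite differently, via loose Legendrians and the stabilisation/destabilisation operations introduced in \cite{cmp19}, rather than by directly building a BEM disc inside the plastikstufe neighbourhood.
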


A forerunner version of this result, which is \cite[Theorem 1.2]{hua17},
was given in \cite{cmp19}.
Further,
it is shown in \cite[Theorem 1.3]{hua17}
that if $(M,\xi)$ admits a
bordered Legendrian open book and $\dim M=5$,
then $(M,\xi)$ is overtwisted.
In fact,
a contact manifold $(M,\xi)$ is overtwisted precisely
if $(M,\xi)$ contains a bordered Legendrian open book
with pages diffeomorphic to $P\times\Sigma$,
where $P$ is a closed manifold and
$\Sigma$ a compact surface with boundary,
see \cite[Corollary 1.4]{hua17}.


\subsection{Local model near the binding}
\label{subsec:locmodnearthebind}

Let $(M,\xi)$ be a contact manifold.
We consider a
bordered Legendrian open book decomposition
$(B,\vartheta)$ of a submanifold $N\subset(M,\xi)$.
By Definition \ref{defn:relativeOB}
the binding $B\subset N$
admits a tubular neighbourhood $B\times D^2$
on which the fibre projection $\vartheta$ is given by
$(b,z=r\rme^{\rmi\theta})\mapsto\theta$.

By \cite[Proposition 4]{nie06}
a neighbourhood of $B\times D^2$ in $(M,\xi)$
is contactomorphic to a neighbourhood of
$\{0\}\times D^2\times B$ in
$\big(\R\times\C\times T^*B,\ker\alpha_o\big)$,
where
\[
\alpha_o:=
\rmd t
+\tfrac12\big(x\rmd y-y\rmd x\big)
+\lambda_{T^*B}\;,
\]
denoting by $t,z=x+\rmi y$ the coordinates on $\R\times\C$
and by $\lambda_{T^*B}$ the Liouville $1$-form on $T^*B$.
The contactomorphism restricts to
$(b,z)\mapsto(0,z,b)$ on $B\times D^2$.
Again we identify $B$ with the zero section in $T^*B$.


\subsection{Local model near the boundary}
\label{subsec:locmodnearthebdary}

Consider a contact manifold $(M,\xi)$.
Let $N\subset(M,\xi)$ be a submanifold
that supports a
bordered Legendrian open book
$(B,\vartheta)$.
By Definition \ref{defn:relativeOB}
the restriction of $\vartheta$ to $\partial N$
induces a locally trivial fibration over $S^1$
with fibre $F$.
Denoting the monodromy diffeomorphism
by $\varphi\co F\ra F$
this fibration is equivalent to the mapping torus
\[
M(\varphi)=
\frac
{[0,2\pi]\times F}
{(2\pi,f)\sim\big(0,\varphi(f)\big)}
\]
of $\varphi$.
The induced diffeomorphism
\[
\varphi^*:=(T\varphi^{-1})^*\co T^*F\ra T^*F
\]
naturally preserves the Liouville $1$-form $\lambda_{T^*F}$
so that the mapping torus
\[
M(\varphi^*)=
\frac
{T^*[0,2\pi]\times T^*F}
{\big((r,2\pi),u\big)\sim\big((r,0),(T\varphi^{-1})^*(u)\big)}
\]
carries the Liouville form $r\rmd\theta+\lambda_{T^*F}$
and can be identified with $T^*\big(M(\varphi)\big)$.
The corresponding Liouville vector field is of the form
$r\partial_r+Y_{T^*F}$ and $M(\varphi^*)$ fibres naturally
over $(T^*S^1,r\rmd\theta)$
with fibre projection map
$[(r,\theta),u]\mapsto[(r,\theta)]$.

We equip $\R\times M(\varphi^*)$ with the contact form
$\alpha_{\varphi}$ induced by
\[
\alpha_{\varphi}\equiv\rmd t+r\rmd\theta+\lambda_{T^*F}
\;.
\]
By \cite[Lemma 4.6]{mnw13}
a neighbourhood of $\partial N\subset(M,\xi)$
is contactomorphic to a neighbourhood of
\[
\{0\}\times M(\varphi)
\subset
\big(\R\times M(\varphi^*),\ker\alpha_{\varphi}\big)
\;,
\]
so that
\[
\big(\{0\}\times\{r=0,u=0\}\big)
\equiv
\{0\}\times M(\varphi)
\]
corresponds to $\partial N$ and
a neighbourhood of $\partial N$ in $N\subset M$
corresponds to the quotient of the set
$\{0\}\times\{r\leq0,u=0\}$
in $\{0\}\times M(\varphi^*)$.
We orient $T^*S^1\equiv\R\times S^1$ by $\rmd r\wedge\rmd\theta$.
This matches the co-orientation conventions
for the singular distribution determined by $\xi$
and the pages of $(B,\vartheta)$ in Section \ref{subsec:legopenbooks}.


\section{Holomorphic discs}
\label{sec:holomdiscs}


\subsection{A germ of Bishop disc filling}
\label{subsec:agermofbishopdisfill}

Motivated by Section \ref{subsec:locmodnearthebind}
we define a natural almost complex structure $J$
on $\R\times(\R\times\C\times T^*B)$
that allows a lifting
of obvious holomorphic discs similar to
cf.\ \cite[Section 2]{gz16b}.

For that choose a Riemannian metric $g_B$ on $B$.
Denote by $J_{T^*B}$
the almost complex structure on $T^*B$
that is induced by the Levi-Civita connection of $g_B$,
see \cite[Appendix B]{nie06} or \cite[Section 5]{kwz22}.
Observe that $J_{T^*B}$ is compatible
with the symplectic form $\rmd\lambda_{T^*B}$.
Furthermore
denoting by $g_B^{\flat}$
the dual metric of $g_B$
the kinetic energy function on $T^*B$ is defined by
$k(u)=\frac12g^{\flat}(u,u)$, $u\in T^*B$,
and satisfies
$\lambda_{T^*B}=-\rmd k\circ J_{T^*B}$.
In other words,
$k$ is a strictly plurisubharmonic potential
in the sense of \cite[Section 3.1]{gz12}.

On the Liouville manifold
\[
  (V,\lambda_V):=
  \Big(
    \C\times T^*B,\tfrac12\big(x\rmd y-y\rmd x\big)
    +\lambda_{T^*B}
  \Big)
\]
we consider the almost complex structure
\[
J_V=\rmi\oplus J_{T^*B}
\;,
\]
which is compatible with the symplectic form $\rmd\lambda_V$.
The function
\[
\psi(z,u):=\tfrac14|z|^2+k(u)
\]
is a strictly plurisubharmonic potential $\psi$
on $(V,\lambda_V,J_V)$
satisfying
$\lambda_V=-\rmd\psi\circ J_V$.

The contactisation
$(\R\times V,\rmd t+\lambda_V)$
of $(V,\lambda_V)$ is given by
$(\R\times\C\times T^*B,\alpha_o)$
and the contact structure $\ker\alpha_o$
is spanned by vectors of the form
$v-\lambda_V(v)\partial_t$, $v\in TV$.
Using coordinates $s$ on the first $\R$-factor
of $\R\times(\R\times V)$
we define $J$ by requiring
$J(\partial_s)=\partial_t$ and
\[
J\big(v-\lambda_V(v)\partial_t\big)
=
J_Vv-\lambda_V(J_Vv)\partial_t
\]
for all $v\in TV$.
In other words,
$J$ is a $s$-translation invariant almost complex structure
on $\R\times(\R\times V)$
that preserves the contact distributions $\ker\alpha_o$
on all slices $\{s\}\times\R\times V$.

\begin{rem}
\label{rem:symplecticrem}
 The form $\rmd(s\alpha_o)=\rmd s\wedge\alpha_o+s\rmd\alpha_o$
 is symplectic on $\{s>0\}$ and compatible with $J$.
 Therefore,
 the function $(s,t,z,u)\mapsto\tfrac12s^2$
 is strictly plurisubharmonic on $\{s>0\}$
 because $\alpha_o=-\rmd s\circ J$.
\end{rem}

By \cite[Proposition 5]{nie06} the {\bf Niederkr\"uger map}
\[
\Phi(s,t,z,u)=\big(s-\psi(z,u)+\rmi t,z,u\big)
\]
is a biholomorphic map
\[
\Phi\co
(\R\times\R\times\C\times T^*B,J)
\lra
(\C^2\times T^*B,\rmi\oplus J_{T^*B})
\;,
\]
which maps the hypersurface $\{s=0\}$ onto $\{s\circ\Phi^{-1}=0\}=\{x_1=-\psi(z,u)\}$.
As in \cite[Proposition 3.2]{nie06} we consider a $(n-1)$-dimensional family of holomorphic
discs
\[
\{-\varepsilon^2\}\times\D_{2\varepsilon}\times\{b\}
\]
in $(\C^2\times T^*B,\rmi\oplus J_{T^*B})$ with parameters $\varepsilon\in\R^+$, $b\in B$.
Here, we denote by $\D_r\subset\C$ the closed disc with radius $r$ and centre $0$.
Writing $\D$ for the closed unit disc $\D_1$ the disc family can be parametrised by
\[
v_{\varepsilon,b}(z)=
\big(
-\varepsilon^2,2\varepsilon\cdot z,b
\big)
\]
for $z\in\D$.
The lifts $u_{\varepsilon,b}=\Phi^{-1}\circ v_{\varepsilon,b}$ via the Niederkr\"uger map
are holomorphic maps
\[
(\D,\partial\D)\lra
\Big(
  \R\times\R\times\C\times T^*B,
  \{0\}\times\{0\}\times\C^*\times B
\Big)
\]
given by
\[
u_{\varepsilon,b}(z)=
\Big(
\varepsilon^2\big(|z|^2-1\big),0,2\varepsilon\cdot z,b
\Big)
\;.
\]
We will refer to the $u_{\varepsilon,b}$ as {\bf local Bishop discs}.

From \cite[Proposition 6]{nie06} we get {\bf local uniqueness}:

\begin{lem}
\label{lem:locunique}
 For all simple $J$-holomorphic disc maps
 $u\co\D\ra\R\times\R\times\C\times T^*B$
 such that $u(\partial\D)\subset\{0\}\times\{0\}\times\C^*\times B$
 there exist $\varepsilon\in\R^+$, $b\in B$ and
 a M\"obius transformation $\varphi\co(\D,\partial\D)\ra(\D,\partial\D)$
 such that $u=u_{\varepsilon,b}\circ\varphi$.
\end{lem}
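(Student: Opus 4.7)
The plan is to transfer the problem via the Niederkr\"uger biholomorphism $\Phi$ to the standard product structure $\rmi\oplus J_{T^*B}$ on $\C^2\times T^*B$ and then to extract rigidity from pluripotential theory. Setting $v:=\Phi\circ u\co\D\to\C^2\times T^*B$ and writing $v=(v_1,v_2,w)$ with respect to the product splitting, the hypothesis $u(\partial\D)\subset\{0\}\times\{0\}\times\C^*\times B$, combined with $\Phi(s,t,z,u)=(s-\psi(z,u)+\rmi t,z,u)$ and the fact that $k$ vanishes on the zero section, translates into the three boundary conditions $\im v_1\equiv0$, $w(\partial\D)\subset B$, and $\mathrm{Re}\,v_1=-\tfrac14|v_2|^2$ on $\partial\D$.

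Next I would pin down the $v_1$- and $w$-components independently. Since $v_1$ is holomorphic on $\D$ with $\im v_1\equiv0$ on $\partial\D$, the Schwarz reflection principle extends $v_1$ to a bounded holomorphic function on the Riemann sphere, forcing $v_1\equiv c$ for some $c\in\R$; writing $c=-\varepsilon^2$ with $\varepsilon\geq0$, the third boundary condition yields $|v_2|\equiv2\varepsilon$ on $\partial\D$. For the $T^*B$-component, $k$ is a nonnegative strictly plurisubharmonic potential on $(T^*B,J_{T^*B})$, so $k\circ w$ is subharmonic on $\D$ and vanishes on $\partial\D$; the maximum principle then forces $k\circ w\equiv0$, i.e.\ $w(\D)\subset B$. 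Since the zero section $B$ is Lagrangian, hence totally real in $(T^*B,J_{T^*B})$, any holomorphic disc mapping into $B$ must be constant, so $w\equiv b$ for some $b\in B$.

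It remains to analyze $v_2\co\D\to\C$, which is holomorphic with $|v_2|\equiv2\varepsilon$ on $\partial\D$. If $\varepsilon=0$ then $v_2\equiv0$ by the maximum modulus principle and $u$ is constant, contradicting simplicity; so $\varepsilon>0$, and $v_2/(2\varepsilon)\co\D\to\D$ is proper and holomorphic, hence a finite Blaschke product. Thus $v_2=2\varepsilon\cdot\varphi$ for a proper self-map $\varphi\co(\D,\partial\D)\to(\D,\partial\D)$ of some degree $k\geq1$, and comparing with $v_{\varepsilon,b}(z)=(-\varepsilon^2,2\varepsilon z,b)$ gives $v=v_{\varepsilon,b}\circ\varphi$, hence $u=u_{\varepsilon,b}\circ\varphi$. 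Since $u$ is simple, $\varphi$ must have degree $k=1$, which forces $\varphi$ to be a M\"obius automorphism of $(\D,\partial\D)$.

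I expect the main technical content to lie in the two rigidity statements of the middle step: Schwarz reflection for $v_1$ and the combination of the maximum principle with totally real rigidity for $w$. Both rely essentially on the fact that $\psi=\tfrac14|z|^2+k$ decomposes into strictly plurisubharmonic potentials that vanish precisely on the part of the boundary geometry imposed by the Legendrian data, so that the boundary condition encodes enough positivity to pin down $v_1$ and $w$ completely and reduce the classification of $v_2$ to the classical Blaschke dichotomy.
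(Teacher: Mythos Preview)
Your proof is correct and follows essentially the same route as the paper: transform via $\Phi$, decompose into components $(v_1,v_2,w)$, force $v_1$ and $w$ to be constant, and classify $v_2$ as a disc automorphism using simplicity. The only differences are in the tools you pick for the two rigidity steps---Schwarz reflection for $v_1$ where the paper uses the maximum/minimum principle on the harmonic function $\im v_1$, and the subharmonicity of $k\circ w$ where the paper uses Stokes' theorem on the symplectic energy $\int_{\D}w^*\rmd\lambda_{T^*B}$---but these are interchangeable standard arguments and do not alter the structure of the proof.
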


\begin{proof}
  Consider the Niederkr\"uger transform $v=\Phi(u)$ of $u$.
  The projection to the $T^*B$-factor is a $J_{T^*B}$-holomorphic disc with boundary on the zero
  section $B$ and is therefore constant as by Stokes theorem the symplectic energy vanishes.
  So we are left with a holomorphic disc $v=(f+\rmi g,v_2)$ in $\C^2$
  such that the restriction to $\partial\D$ satisfies
  $f+\tfrac14|v_2|^2=0$ and $g=0$,
  because
    \[
    \Phi\big(\{0\}\times\{0\}\times\C^*\times B\big)
    =\big\{x_1=-\tfrac14|z|^2,y_1=0,u=0\}
    \;.
    \]
  The maximum and minimum principle
  implies that the harmonic function $g$ vanishes identically
  so that $f$ must be constant
  according to the classical Cauchy--Riemann equations.
  Write $f=-\varepsilon^2$ for some $\varepsilon>0$.
  Observe, that $\varepsilon$ indeed cannot vanish
  as a constant holomorphic disc is never simple.
  Hence, $\frac{1}{2\varepsilon}v_2$ is a holomorphic
  self-map of $(\D,\partial\D)$.
  Again by the simplicity assumption
  the degree of the restriction of $\frac{1}{2\varepsilon}v_2$
  to the boundary must be $1$.
  The argument principle implies
  that $\frac{1}{2\varepsilon}v_2$ is an automorphism which is given by a M\"obius transformation.
\end{proof}

We remark that the boundary circles
\[
u_{\varepsilon,b}(\partial\D)=
\{0\}\times\{0\}\times\partial\D_{2\varepsilon}\times\{b\}
\]
of the local Bishop discs
foliate $\{0\}\times\{0\}\times\C^*\times B$.
Recall that a neighbourhood of
$\{0\}\times\{0\}\times\{0\}\times B$
corresponds to a neighbourhood of the binding $B$ in $N^*$.


\subsection{Pseudo-convexity}
\label{subsec:psicon}

We consider a $2n$-dimensional
almost complex manifold $(W,J)$
with non-empty boundary.
Denote by $M$ a boundary component $M\subset\partial W$
and by $\xi=TM\cap JTM$ the $J$-invariant hyperplane distribution
along $M$.
Assume that there exists a smooth $1$-form $\alpha$
on $M$ such that $\xi=\ker\alpha$ and that $\rmd\alpha$
is $J$-{\bf positive} on complex lines in $\xi$
in the sense that
$\rmd\alpha(v,Jv)>0$ for all $v\in\xi$, $v\neq0$.
In other words,
$(M,\xi)$ is a $J$-{\bf convex hypersurface} of $(W,J)$
as defined in \cite{eli90}.

In this situation,
$\xi$ is a contact structure with contact form $\alpha$,
cf. Remark \ref{positivoncpxlinesimpliessympl}.
Denoting by $R$ the Reeb vector field of $\alpha$
we additionally assume that $-JR$ is outward pointing.
In other words,
$(M,\xi)$ is a $J$-{\bf convex boundary component} of $(W,J)$.
Observe, that $(W,J)$ is naturally oriented by the $n$-th power of any
$J$-positive (and hence non-degenerate) $2$-form on $W$.
Therefore,
the contact orientation
$\alpha\wedge(\rmd\alpha)^{n-1}$ on $M$
and the boundary orientation
on $M\subset\partial W$ coincide.

\begin{exwith}
\label{ex:tobeshifted}
The almost complex manifold
$\big((0,1]\times\R\times\C\times T^*B,J\big)$
as constructed in Section \ref{subsec:agermofbishopdisfill}
has $J$-convex boundary with strictly plurisubharmonic function $\tfrac12s^2$,
see Remark \ref{rem:symplecticrem}.
\end{exwith}

In fact, if $M$ is compact, then $M\subset\partial W$ is the regular zero-level set
of a strictly plurisubharmonic function $\varrho$ defined near $M$ that is negative
on the complement of $M$, see \cite[Lemma 2.7]{ce12}.
The precomposition $\varrho\circ u$ with a holomorphic map
$u\co G\ra(W,J)$, $G\subset\C$ open domain,
is subharmonic where defined, and therefore satisfies the strong
{\bf maximum principle}.
So, for example,
non-constant holomorphic spheres in $(W,J)$
are uniformly bounded away from
the boundary component $M\subset\partial W$,
which we assumed to be compact.
Furthermore,
in the case of a non-constant $J$-holomorphic disc
$u\co(\D,\partial\D)\ra(W,M)$,
we get $u(\Int\D)\subset W\setminus M$ and
the radial derivative
\[
0<
\rmd\big(\varrho\circ u\big)(\partial_r)
=
-\big(\rmd\varrho\circ J\big)
\Big(Tu(\partial_{\theta})\Big)
\]
is positive along $\partial\D$
by the {\bf boundary lemma of E.\ Hopf}.
Because
\[
\xi=
\ker(\rmd\varrho)\cap
\ker\big(\!-\rmd\varrho\circ J\big)
\]
is co-oriented by the Reeb vector field
of any contact form defining $\xi$
as a co-oriented hyperplane distribution
this means that the curve $u(\partial\D)\subset M$
is an immersion {\bf positively transverse} to $\xi$.
In particular, any such holomorphic disc $u$
such that $u|_{\partial\D}$ is an embedding
must be simple,
see \cite[Lemma 4.5]{gz10}.


\subsection{Uniqueness of the germ near the binding}
\label{subsec:uniquegermnerrbind}

Consider a $2n$-dimensional almost complex manifold $(W,J)$
that has a compact $J$-convex boundary component $(M,\xi)$
as described in Section \ref{subsec:psicon}.
Assume that $(M,\xi)$ contains a submanifold $N$
supporting a bordered Legendrian open book $(B,\vartheta)$.
By Section \ref{subsec:locmodnearthebind}
a neighbourhood of the binding
$B\subset W$ is diffeomorphic to a neighbourhood
of $\{0\}\times\{0\}\times\{0\}\times B$
in $(-\infty,0]\times\R\times\C\times T^*B$
such that the restriction to the boundaries
$M$ and $\{0\}\times\R\times\C\times T^*B$
induces a contactomorphism.
In addition,
assume that the almost complex structure $J$ of $W$
corresponds to the one constructed in Section
\ref{subsec:agermofbishopdisfill} under the diffeomorphism.

From \cite[Proposition 7]{nie06}
we get {\bf semi-global uniqueness}:

\begin{lem}
\label{lem:semiglobunique}
 There exists a neighbourhood $U_B\subset W$ of $B$
 such that for all simple $J$-holomorphic disc maps
 $u\co(\D,\partial\D)\ra(W,N^*)$
 with $u(\D)\cap U_B\neq\emptyset$
 we have that $u(\D)$ is contained in $U_B$ and
 there exist $\varepsilon\in\R^+$, $b\in B$ and
 a M\"obius transformation $\varphi\co(\D,\partial\D)\ra(\D,\partial\D)$
 such that $u=u_{\varepsilon,b}\circ\varphi$.
\end{lem}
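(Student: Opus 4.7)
The plan is to reduce the semi-global uniqueness to the local uniqueness already established in Lemma~\ref{lem:locunique} by confining any disc that enters a sufficiently small neighborhood of $B$ to the local model chart. First I would fix the identification of a neighborhood of $B \subset W$ with a neighborhood of $\{0\}\times\{0\}\times\{0\}\times B$ in $(-\infty,0]\times\R\times\C\times T^*B$ given in Section~\ref{subsec:uniquegermnerrbind}, under which $N^*$ corresponds to $\{0\}\times\{0\}\times\C^*\times B$ and $J$ to the almost complex structure constructed in Section~\ref{subsec:agermofbishopdisfill}. Let $\Omega_B$ denote this local model chart, and let $U_B \Subset \Omega_B$ be a small open neighborhood of $B$ whose size is to be determined.

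The main step is to show that $U_B$ can be shrunk so that every simple $J$-holomorphic disc $u\co(\D,\partial\D)\ra(W,N^*)$ with $u(\D)\cap U_B\neq\emptyset$ satisfies $u(\D)\subset\Omega_B$. I would prove this by contradiction using Gromov compactness: otherwise there exists a sequence of simple $J$-holomorphic discs $u_n$ meeting ever-smaller neighborhoods $U_{B,n}\searrow B$ whose images leave a fixed smaller shell inside $\Omega_B$. The strict $J$-convexity of $M$ and the plurisubharmonic potential $\psi(z,u_3)=\tfrac14|z|^2+k(u_3)$ available via the Niederkr\"uger transform $\Phi$ provide a uniform energy bound on the portion of $u_n$ inside $\Omega_B$ together with positive transversality of $u_n(\partial\D)$ to $\xi$ along $M$ (Hopf boundary lemma, cf.\ Section~\ref{subsec:psicon}); together with the local chart compactness, this gives a Gromov-type limit whose non-constant components are either $J$-holomorphic discs with boundary on $N^*$ confined to $\Omega_B$, or sphere/disc bubbles. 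The Stokes-and-exactness argument (applied to $\lambda_{T^*B}$ on the zero section, exactly as in the proof of Lemma~\ref{lem:locunique}) excludes bubbles touching $B$, while every non-constant limiting disc must, by Lemma~\ref{lem:locunique}, be a reparametrized local Bishop disc $u_{\varepsilon,b}$, whose image lies in an arbitrarily small neighborhood of $B$ once $\varepsilon$ is small. This contradicts the assumed escape.

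Once $u(\D)\subset\Omega_B$, the statement follows directly by transporting $u$ into $\R\times\R\times\C\times T^*B$ through the chart and invoking Lemma~\ref{lem:locunique}: the $T^*B$-component of $\Phi\circ u$ is $J_{T^*B}$-holomorphic with boundary on the Lagrangian zero section and hence constant by Stokes' theorem, the imaginary part of the first $\C$-coordinate vanishes on the boundary and so vanishes identically by the maximum principle, the Cauchy--Riemann equations force the real part to be a negative constant $-\varepsilon^2$, and the argument principle applied to $\tfrac{1}{2\varepsilon}v_2$ yields the M\"obius reparametrization $\varphi$. This produces the required $\varepsilon\in\R^+$, $b\in B$, and $\varphi\co(\D,\partial\D)\ra(\D,\partial\D)$ with $u=u_{\varepsilon,b}\circ\varphi$.

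The hard part is unambiguously the confinement step. The tension is that the chart is only local and the boundary condition $N^*$ leaves $N$ at $\partial N$; one must simultaneously prevent escape into $W\setminus M$ (handled by $J$-convexity and the strong maximum principle of Section~\ref{subsec:psicon}) and escape along $M$ away from $B$ (handled by the Bishop disc asymptotics forced by Lemma~\ref{lem:locunique} on any compactness limit). The technical simplification that makes this feasible is that all bubbles can be ruled out near $B$ by the vanishing symplectic energy on the zero section, so Gromov compactness delivers a clean reparametrized local Bishop disc in the limit.
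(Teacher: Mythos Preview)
Your proposal and the paper's proof take genuinely different routes, and your confinement step has a real gap.

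The paper does \emph{not} use Gromov compactness at all. Instead it constructs $U_B$ explicitly as a union of complex hypersurfaces $H_{x_1,y_1}=\Phi^{-1}\big(\{x_1+\rmi y_1\}\times\C\times T^*B\big)\cap\{s\le 0\}$ for $|x_1|,|y_1|<\delta$, which foliate the model chart. For a disc $u$ with $G=u^{-1}(U_B)\neq\emptyset$, write $\Phi(u)=(f+\rmi g,v_2,v_3)$ on $G$. Either $g$ has a locally constant germ, in which case the identity theorem forces $f+\rmi g$ constant on $G$, so $G=u^{-1}(H_{f_o,g_o})$ is both open and closed and therefore $G=\D$; or $\{\rmd g=0\}$ is nowhere dense, in which case $u(\D)$ meets some $H_{x_1^*,y_1^*}$ with $y_1^*\neq 0$ in finitely many interior points, giving a strictly positive local intersection number that equals the homological intersection $[u]\cdot[\partial c]=0$ for a suitable $(2n-1)$-chain $c$ --- a contradiction. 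This is a direct open--closed plus positivity-of-intersections argument; no limiting sequence ever appears.

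The gap in your approach is the appeal to Gromov compactness. The lemma is stated for \emph{all} simple $J$-holomorphic discs $(\D,\partial\D)\to(W,N^*)$, with no homological constraint whatsoever, so there is no a~priori global energy bound on your sequence $u_n$. Your claim that the plurisubharmonic potential $\psi$ yields ``a uniform energy bound on the portion of $u_n$ inside $\Omega_B$'' does not give what Gromov compactness needs: even granting a local area bound, the preimages $u_n^{-1}(\Omega_B)$ are uncontrolled open subsets of $\D$ with no Lagrangian or totally real boundary condition on their interior boundary components, so there is no compactness theorem that delivers a limiting stable disc from this data. Without a global limit you cannot conclude that the $u_n$ themselves fail to escape $\Omega_B$. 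The energy bounds in Section~\ref{subsec:gromovcomp} only become available \emph{after} the homology class is fixed, and that section in turn relies on the present lemma; invoking compactness here would be circular even if one restricted to that class.
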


\begin{proof}
 Using the above diffeomorphism
 we describe such a neighbourhood $U_B$
 as subset of $(-\infty,0]\times\R\times\C\times T^*B$:
 For $x_1<0$, $y_1\in\R$ consider
 the complex hypersurfaces
 $\{x_1+\rmi y_1\}\times\C\times T^*B$
 in $\big(\C^2\times T^*B,\rmi\oplus J_{T^*B}\big)$.
 The intersection with the real hypersurface
 $\{x_1=-\psi(z,u)\}$ is the sphere bundle
 in $\underline{\C}\oplus T^*B$
 given by $|x_1|=\tfrac14|z|^2+\frac12g^{\flat}(u,u)$;
 the intersection with $\{x_1\leq-\psi(z,u)\}$,
 therefore, is the corresponding disc bundle in 
 $\underline{\C}\oplus T^*B$.
 Here $\underline{\C}$ denotes the trivial
 complex line bundle over $B$. 
 Hence, the complex hypersurfaces
   \[
   H_{x_1,y_1}:=
   \Phi^{-1}\Big(\{x_1+\rmi y_1\}\times\C\times T^*B\Big)
   \cap\{s\leq0\}
   \]
 foliate the complement of
 $\{0\}\times\R\times\{0\}\times B$
 in $\big((-\infty,0]\times\R\times\C\times T^*B,J\big)$
 including a foliation by their real boundaries.
 Then, by definition,
 $U_B$ corresponds to
 \[
 U_B:=
 \Big(\{0\}\times(-\delta,\delta)\times\{0\}\times B\Big)
 \cup\bigcup_{|x_1|,|y_1|<\delta}H_{x_1,y_1}
 \]
 for $\delta>0$ sufficiently small,
 under the above mentioned identifying diffeomorphism.
 
 Consider a simple $J$-holomorphic disc
 $u\co(\D,\partial\D)\ra(W,N^*)$
 and suppose that $G=u^{-1}(U_B)$ is not empty.
 Observe that $G\subset\D$ is open.
 Restricting to $G$ we write
 $\Phi(u)=(f+\rmi g, v_2,v_3)$.
 If $g$ is constant in a neighbourhood of a point in $G$,
 then so is the holomorphic function $f+\rmi g$.
 With the identity theorem
 this implies that $f+\rmi g$ is constant on $G\setminus\partial\D$
 and, hence, on $G$.
 Denoting the constant by $f_o+\rmi g_o$
 this translates into $G=u^{-1}(H_{f_o,g_o})$
 so that $G\subset\D$ is closed also. 
 Hence, $G=\D$ by an open--closed argument,
 i.e. $u(\D)\subset U_B$ and
 the claim follows with Lemma \ref{lem:locunique}.
 
 It remains to show
 that the complementary case,
 namely that $\{\rmd g=0\}$ has no interior points,
 cannot occur.
 Indeed, otherwise
 the holomorphic disc $u(\D)$ and
 the complex hyperplane $H_{x_1^*,y_1^*}$ intersect
 along finitely many points
 for $x_1^*<0$, $y_1^*\neq0$.
 For that observe with Section \ref{subsec:psicon}
 and Example \ref{ex:tobeshifted} (suitably shifted in the $s$-direction)
 that the intersection is along interior points of $u(\D)$.
 This follows because
 $H_{x_1^*,y_1^*}$ and the boundary condition $N^*$
 for the holomorphic disc $u$ are disjoint
 as
 \[
 \Phi\Big(\{0\}\times\{0\}\times\C^*\times B\Big)
 \subset
 \R_-\times\{y_1=0\}\times\C^*\times B
 \;.
 \]
 By positivity of local intersection numbers
 the total intersection number
 $u\bullet H_{x_1^*,y_1^*}$ is positive.
 On the other hand
 this total intersection number
 is equal to the homological intersection
 of $[u]\in H_2(W,M)$ and $[\partial c]\in H_{2n-2}W$,
 where the $(2n-1)$-chain $c$ is given by
 \[
 c=\bigcup_{x\in[x_1^*,0]}H_{x,y_1^*}
 \;.
 \]
 Indeed, the maximum principle implies
 that $u(\Int\D)$ does not intersect $M$
 so that $u(\D)$ and $\partial c\cap M$ are disjoint.
 Hence,
 \[
 u\bullet H_{x_1^*,y_1^*}=[u]\cdot [\partial c]=0
 \;.
 \]
 This is a contradiction.
 In other words,
 $\{\rmd g=0\}$ has to have an interior point.
\end{proof}


\subsection{Holomorphic model near the boundary}
\label{subsec:holommodelnearbound}

As in Section \ref{subsec:agermofbishopdisfill}
we define an almost complex structure on $M(\varphi^*)$:
Choose a Riemannian metric $g_{\theta}$ on $F$
that smoothly depends on $\theta\in\R$
such that $g_{2\pi}=g_{\theta}=\varphi^*g_{\theta-2\pi}$
for all $\theta\in(2\pi-\varepsilon,2\pi+\varepsilon)$
and $\varepsilon>0$ small.
For each $\theta\in\R$
we define the kinetic energy on $T^*F$ by
\[
k_{\theta}(u)=\frac12g_{\theta}^{\flat}(u,u)
\;,\quad u\in T^*F\;,
\]
denoting by $g_{\theta}^{\flat}$
the dual metric of $g_{\theta}$.
For each $\theta\in[0,2\pi]$ we
construct an almost complex structure $J_{\theta}$
on $T^*F$ as on \cite[Section 5.3]{kwz22}
that is compatible with $\rmd\lambda_{T^*F}$ and
turns $k_{\theta}$ into a strictly plurisubharmonic potential
on $T^*F$ in the sense of \cite[Section 3.1]{gz12}
meaning that
\[
\lambda_{T^*F}=-\rmd k_{\theta}\circ J_{\theta}\;.
\]
The almost complex structure $J_{\theta}$
is uniquely determined by $g_{\theta}$ and $\rmd\lambda_{T^*F}$;
therefore, $J_{\theta}$ depends smoothly on $\theta$
and satisfies $J_{2\pi}=J_{\theta}=\varphi^*J_{\theta-2\pi}$
for $\theta\in(2\pi-\varepsilon,2\pi+\varepsilon)$.

The metric on $[0,2\pi]\times F$
obtained by taking the sum of the Euclidean metric
and $g_{\theta}$ for each $\theta\in[0,2\pi]$
descents to a metric $g$ on the mapping torus $M(\varphi)$.
The induced kinetic energy function on $M(\varphi^*)$
is denoted by
\[
\psi(r,\theta,u)=\tfrac12r^2+k_{\theta}(u)
\;.
\]
The almost complex structure
$\rmi\oplus J_{\theta}$ on $T^*[0,2\pi]\times T^*F$
descents to a compatible almost complex structure
$J_g$ on $M(\varphi^*)$ and
\[
r\rmd\theta+\lambda_{T^*F}\equiv-\rmd\psi\circ J_g
\]
modulo second order terms in $|u|_{\theta}$, $u\in T^*F$,
in the sense of \cite[Appendix E.5]{mcsa04}.
Indeed, the derivative of $k_{\theta}(u)$ in $\theta$-direction
contributes a term that locally is a quadratic form in the
coordinates of $u\in T^*F$.
Consequently,
the restriction of $\rmd r\wedge\rmd\theta+\rmd\lambda_{T^*F}$
to $\{u=0\}$ in $M(\varphi^*)$ is equal to
$-\rmd\big(\rmd\psi\circ J_g\big)$.
As 
$\rmd r\wedge\rmd\theta+\rmd\lambda_{T^*F}$
is positive on $(\rmi\oplus J_{\theta})$-complex lines
and equals $-\rmd\big(\rmd\psi\circ J_g\big)$
modulo first order terms in $|u|_{\theta}$, $u\in T^*F$,
we conclude that $\psi$ is strictly plurisubharmonic
in a neighbourhood of $\{u=0\}$ in $M(\varphi^*)$.

We consider the almost complex manifold
$(\C\times M(\varphi^*),\rmi\oplus J_g)$
provided with the Liouville form
$s\rmd t+r\rmd\theta+\lambda_{T^*F}$,
where we denote the coordinates on $\C$ by $s+\rmi t$.
Observe that the almost complex structure
\[
J=\rmi\oplus J_g
\]
is compatible with
$\rmd s\wedge\rmd t+\rmd r\wedge\rmd\theta+\rmd\lambda_{T^*F}$
and that the function
\[
\Psi(s+\rmi t,r,\theta,u)=\tfrac12s^2+\psi(r,\theta,u)
\]
is strictly plurisubharmonic
in a neighbourhood of $\{u=0\}$ in $\C\times M(\varphi^*)$.
This holds because
\[
s\rmd t+r\rmd\theta+\lambda_{T^*F}\equiv-\rmd\Psi\circ J
\]
modulo second order terms in $|u|_{\theta}$, $u\in T^*F$.
Therefore, as above, the restriction of
$\rmd s\wedge\rmd t+\rmd r\wedge\rmd\theta+\rmd\lambda_{T^*F}$
to $\{u=0\}$ in $\C\times M(\varphi^*)$ is equal to
$-\rmd\big(\rmd\Psi\circ J\big)$.

By rescaling the metric $g_{\theta}$ on $F$ by a constant
if necessary
we can assume that $\Psi$ is strictly plurisubharmonic
in a neighbourhood of $\{\Psi=\tfrac12\}$.
The hypersurfaces
$\{\Psi=\tfrac12\}$ and $\{s=1\}$
are transverse to $s\partial_s+r\partial_r+Y_{T^*F}$,
which is a Liouville vector field w.r.t.\ the symplectic form
$\rmd s\wedge\rmd t+\rmd r\wedge\rmd\theta+\rmd\lambda_{T^*F}$.
Therefore,
the contraction into the symplectic form induces contact forms on both
hypersurfaces.
The induced contact form on 
$\{s=1\}=\{1\}\times\rmi\R\times M(\varphi^*)$
is $\alpha_{\varphi}$; the one on $\{\Psi=\tfrac12\}$ is given by
$-\rmd\Psi\circ J$ along $\{u=0\}$.

A reparametrisation of the flow of the Liouville vector field
$s\partial_s+r\partial_r+Y_{T^*F}$ yields a contact embedding
of $\{\Psi=\tfrac12,s>0\}$ onto $\{s=1\}$ w.r.t.\ the induced contact structures,
see \cite[Appendix A.1]{bschz19}.
The hypersurfaces $\{\Psi=\tfrac12\}$ and $\{s=1\}$
intersect along $\{s=1,r=0,u=0\}$, on which the flow is stationary.
Moreover, as we flow along the Liouville vector field
$s\partial_s+r\partial_r+Y_{T^*F}$
we observe that the multi level set
$\{\Psi=\tfrac12,s>0, t=0, r\leq0, u=0\}$
corresponds to $\{1\}\times\{0\}\times\{r\leq0, u=0\}$
in $\{1\}\times\rmi\R\times M(\varphi^*)$
under the contact embedding.
The latter set was used in Section \ref{subsec:locmodnearthebdary}
to describe the germ of contact structure near the boundary
of a bordered Legendrian open book;
the boundary being $\{\Psi=\tfrac12,s>0, t=0, r=0, u=0\}$,
which corresponds to $\{1\}\times\{0\}\times\{r=0, u=0\}$.

The hypersurface $\{\Psi=\tfrac12\}$ carries a second contact structure
given by
\[
\ker\big(\rmd\Psi\big)\cap\ker\big(\!-\rmd\Psi\circ J\big)
\,,
\]
which turns $\{\Psi=\tfrac12\}$ into a $J$-convex boundary of
$\{\Psi\leq\tfrac12\}$.
The induced singular characteristic foliation on 
$\{\Psi=\tfrac12,s>0, t=0, r\leq0, u=0\}$
coincides with the one described in the preceding paragraph,
where the contact distribution this time is taken w.r.t.\
$s\rmd t+r\rmd\theta+\lambda_{T^*F}\equiv-\rmd\Psi\circ J$
along $\{u=0\}$. 
By uniqueness of the germ of a contact structure
formulated in \cite[Lemma 4.6]{mnw13}, a neighbourhood of
$\{\Psi=\tfrac12,s>0, t=0, r\leq0, u=0\}$ is contactomorphic to
the contact structure we considered first.
In other words,
given a contact manifold $(M,\xi)$ containing a submanifold $N$
that supports a bordered Legendrian open book,
we obtain an alternative description
of the germ of contact structure near the boundary $\partial N$
presented in Section \ref{subsec:locmodnearthebdary}.


\subsection{Holomorphically blocking boundary}
\label{subsec:holomblockbound}

Let $(W,J)$ be a $2n$-dimensional almost complex manifold 
so that a given contact manifold $(M,\xi)$
is a compact $J$-convex boundary component of $W$,
see Section \ref{subsec:psicon}.
Let $N$ be a submanifold of $(M,\xi)$ 
that supports a bordered Legendrian open book $(B,\vartheta)$.
In view of Section \ref{subsec:holommodelnearbound} we assume
that a neighbourhood of the boundary $\partial N\subset W$
is diffeomorphic to a neighbourhood
of $\{1\}\times\{0\}\times\{r=0, u=0\}$
in $\{\Psi\leq\tfrac12, s>0\}$
such that $N\subset W$ and
$\{\Psi=\tfrac12,s>0, t=0, r\leq0, u=0\}$
correspond to each other diffeomorphically.
Furthermore
we assume that under the diffeomorphism
the almost complex structure $J$ of $W$
corresponds to $\rmi\oplus J_g$ on $\{\Psi\leq\tfrac12\}$
in $\C\times M(\varphi^*)$
inducing contactomorphisms on the boundary.

Similarly to \cite[Lemma 4.7]{mnw13}
we obtain the {\bf blocking lemma}:

\begin{lem}
\label{lem:blockinglemma}
 There exists a neighbourhood $U_{\partial N}\subset W$ of $\partial N$
 such that for all $J$-holomorphic disc maps
 $u\co(\D,\partial\D)\ra(W,N^*)$
 with $u(\D)\cap U_{\partial N}\neq\emptyset$
 are constant.
\end{lem}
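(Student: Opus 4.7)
The plan is to work inside the local holomorphic model from Section~\ref{subsec:holommodelnearbound}. There a neighbourhood of $\partial N$ in $W$ is identified with an open set in $\{\Psi\leq\tfrac12,\,s>0\}\subset\C\times M(\varphi^*)$ equipped with $J=\rmi\oplus J_g$, such that $\partial N$ corresponds to $\{s=1,\,t=0,\,r=0,\,u=0\}$ and the portion of $N^*$ near $\partial N$ to $\{s^2+r^2=1,\,t=0,\,u=0,\,s>0,\,r<0\}$. Two holomorphic functions on the model will carry the whole argument: the coordinate $w:=s+\rmi t$, and the projection
\[
\pi\co\C\times M(\varphi^*)\lra T^*S^1\cong\C^*,
\qquad
\big(w,[r,\theta,u]\big)\longmapsto e^{r+\rmi\theta},
\]
which is well defined because the gluing $\big((r,2\pi),u\big)\sim\big((r,0),\varphi^*u\big)$ respects $e^{r+\rmi\theta}$, and is $(J,\rmi)$-holomorphic.

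I would take $U_{\partial N}\subset W$ to be a small neighbourhood of $\partial N$ contained in the model and foliated by the complex hypersurfaces $\{w=c\}$ with $|c-1|<\delta$; for $\im(c)\neq0$ such a hypersurface avoids $N^*\subset\{t=0\}$. Given a non-constant $J$-holomorphic disc $u\co(\D,\partial\D)\to(W,N^*)$ with $u(\D)\cap U_{\partial N}\neq\emptyset$, I would reproduce the dichotomy of Lemma~\ref{lem:semiglobunique} with $w\circ u$ in the role of $f+\rmi g$. Setting $G:=u^{-1}(\text{local model})$: either $t\circ u\equiv 0$ on an open subset of $G$, in which case the identity theorem forces $w\circ u$ constant on each component of $G$ meeting $\partial\D$, and an open--closed argument then yields $u(\D)\subset\text{local model}$ with $w\circ u\equiv c\in(0,1)$; or $u(\D)$ meets some generic $\{w=c'\}$ with $\im(c')\neq 0$ transversely, producing a positive algebraic intersection that contradicts the vanishing of the relative intersection $[u]\cdot[\{w=c'\}]=0$ (the chain $\bigcup_{c''}\{w=c''\}$ has its trace on $M$ inside $\{t\neq 0\}$, so is disjoint from $u(\partial\D)\subset N^*$).

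Once $w\circ u\equiv c\in(0,1)$ is established, the boundary condition forces $(r\circ u)|_{\partial\D}\equiv r_0:=-\sqrt{1-c^2}<0$. The disc $\pi\circ u\co\D\to\C^*$ is then holomorphic with $|\pi\circ u|=e^{r_0}$ on $\partial\D$; since $\log|\pi\circ u|=r\circ u$ is harmonic with constant boundary value $r_0$, the maximum and minimum principles give $r\circ u\equiv r_0$ on $\D$, and then $\pi\circ u$ has constant modulus, is itself constant, and forces $\theta\circ u\equiv\theta_0$ for some $\theta_0$. The disc $u$ therefore maps into the fibre $\{s=c,\,t=0,\,r=r_0,\,\theta=\theta_0\}\cong T^*F$, is $J_{\theta_0}$-holomorphic there, and has boundary on the Lagrangian zero section, so Stokes' theorem applied to $\lambda_{T^*F}$ gives
\[
\int_\D u^*\rmd\lambda_{T^*F}=\int_{\partial\D}u^*\lambda_{T^*F}=0,
\]
and the vanishing of the symplectic energy makes $u$ constant, contradicting the non-constancy hypothesis.

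The principal obstacle is the confinement dichotomy in the second paragraph: one must carefully arrange the complex hypersurfaces $\{w=c'\}$ and choose $U_{\partial N}$ so that the relative intersection number $[u]\cdot[\{w=c'\}]$ vanishes and positivity of intersections can be brought to bear, mirroring the most delicate part of the proof of Lemma~\ref{lem:semiglobunique}.
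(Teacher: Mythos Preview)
Your confinement step (the open--closed dichotomy driven by positivity of intersections with the complex hypersurfaces $\{w=c'\}$) is essentially identical to the paper's, and it is the crux of the lemma in either approach.

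Where you diverge is in the endgame once $u(\D)$ is trapped in the model with $w\circ u\equiv c\in(0,1)$. The paper does \emph{not} use the holomorphicity of the projection $M(\varphi^*)\to T^*S^1$; it treats the projected map $v\co\D\to T^*S^1$ merely as a smooth map with $v(\partial\D)\subset\{r\leq 0\}$, observes that the boundary loop is null-homotopic in $S^1$ and therefore must somewhere be tangent to a fibre $\{\theta=\text{const}\}$, and concludes that $u(\partial\D)$ is tangent to a page of the Legendrian open book at that point. This contradicts the positive transversality furnished by Hopf's boundary lemma (Section~\ref{subsec:psicon}) unless $u$ is constant. Your route instead exploits that the projection is $(J_g,\rmi)$-holomorphic (true, since $J_g=\rmi\oplus J_\theta$ is block-diagonal), so $r\circ u$ is harmonic with constant boundary value $r_0$, hence constant; then $\pi\circ u$ has constant modulus, hence is constant, forcing $\theta\circ u\equiv\theta_0$; finally a Lagrangian energy argument in the fibre $(T^*F,J_{\theta_0})$ finishes. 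This is correct and arguably more self-contained---it avoids invoking Hopf---but it leans on a structural fact (holomorphicity of the fibration) that you assert without justification; you should add one line verifying $d\pi\circ J_g=\rmi\,d\pi$ from the block form of $J_g$. Incidentally, once you know $s,t,r,\theta$ are all constant with $c^2+r_0^2=1$, the ambient constraint $\Psi\leq\tfrac12$ already forces $k_{\theta_0}\leq 0$, i.e.\ the $T^*F$-component sits on the zero section throughout, so the Stokes step is not even needed.
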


\begin{proof}
 For $|s|<1$, $t\in\R$ consider
 the complex hypersurfaces
   \[
   H_{s,t}:=
   \Big(\{s+\rmi t\}\times M(\varphi^*)\Big)
   \cap\{\Psi\leq\tfrac12\}
   \]
 of $\{\Psi\leq\tfrac12\}$ in
 $(\C\times M(\varphi^*),\rmi\oplus J_g)$.
 Modulo the above identifying diffeomorphism
 $U_{\partial N}$ corresponds to
 \[
 U_{\partial N}:=
 \Big(\{1\}\times(-\delta,\delta)\times M(\varphi)\Big)
 \cup\bigcup_{1-s,|t|<\delta}H_{s,t}
 \]
 for $\delta>0$ sufficiently small.
 
 Let $u\co(\D,\partial\D)\ra(W,N^*)$ be a $J$-holomorphic disc
 such that the open set $G=u^{-1}(U_{\partial N})$ is not empty.
 Write the restriction of $u$ to $G$ as
 $u=(u_1=f+\rmi g, u_2)$ w.r.t.\ $(\C\times M(\varphi^*),\rmi\oplus J_g)$.
 An argument similar to the last paragraph
 of the proof of Lemma \ref{lem:semiglobunique}
 (that utilises $J$-convexity and
 positivity of intersections with the complex hyperplanes $H_{s,t}$)
 shows that $\{\rmd g=0\}$ has an interior point.
 As in the second paragraph
 of the proof of Lemma \ref{lem:semiglobunique}
 this shows that $u_1=f_o+\rmi g_o$ is constant
 and that $G=\D$, i.e. $u(\D)\subset U_{\partial N}$.
 
 The projection map $M(\varphi^*)\ra T^*S^1$
 sends $u_2$ to a smooth map $v\co\D\ra T^*S^1$
 such that $v(\partial\D)\subset\{r\leq0\}\simeq S^1$.
 In particular,
 the degree of $v|_{\partial\D}$ must be zero
 so that $v(\partial\D)$ is tangent to a fibre of $T^*S^1$.
 Therefore,
 $u(\partial\D)$ admits a point of tangency
 with a page of the Legendrian open book on $N$.
 In view of the maximum principle by E.\ Hopf,
 which implies the positive transversality property
 formulated in Section \ref{subsec:psicon},
 this implies that $u$ must be constant.
\end{proof}


\section{Tamed pseudo-convexity}
\label{sec:tamedpseidpconv}

Let $(W,J)$ be a $2n$-dimensional almost complex manifold
that admits a compact $J$-convex boundary component $(M,\xi)$,
see Section \ref{subsec:psicon}.
We assume that there exists a symplectic form $\Omega$ on $W$
that is $J$-positive on complex lines in $TW$,
i.e.\ $J$ is tamed by $\Omega$, cf.\ \cite{eli90}.
Define an odd-symplectic form on $M$ by setting
$\omega:=\Omega|_{TM}$.


\subsection{Magnetic symplectisation}
\label{subsec:magsympl}

Let $\alpha$ be a defining contact form for $\xi$ on $M$ such that
$\rmd\alpha$ is positive on complex lines in $(\xi,J)$.

\begin{rem}
 \label{positivoncpxlinesimpliessympl}
 As $\omega$ is positive on complex lines in $(\xi,J)$, it follows that for all
 non-zero $v\in\xi$ the contraction $\iota_v\omega$ does not vanish.
 In other words, $\omega$ restricts to a symplectic form on $\xi$.
 Choosing a symplectic basis for $(\xi,\omega)$ of the form
 $v_1,Jv_1,\ldots,v_{n-1},Jv_{n-1}$ yields
 that $\omega^{n-1}$ is a positive volume form on $(\xi,J)$.
 Furthermore the contact form $\alpha$ does not vanish
 on the characteristic foliation given by $\ker\omega$,
 i.e.\ $\xi$ and $\ker\omega$ intersect transversally.
 In fact, according to our orientation conventions,
 $\alpha\wedge\omega^{n-1}$ is a positive volume form on $M$.
 
 Observe, that the same reasoning applies to 
 all $2$-forms that are positive on complex lines in $(\xi,J)$. 
 This shows positivity of
 $\alpha\wedge(\rmd\alpha)^{n-1}$ which was used implicitly
 in Section \ref{subsec:psicon}.
\end{rem}

The symplectic neighbourhood theorem for hypersurfaces implies
that there exist $\varepsilon>0$ and a symplectic embedding
\[
\Big((-\varepsilon,0]\times M,\rmd(s\alpha)+\omega\Big)
\lra(W,\Omega)
\]
whose restriction to $\{0\}\times M$ is the inclusion of $M=\partial W$
into $W$,
cf.\ \cite[Exercise 3.36]{mcsa98} and \cite[Remark 2.7]{mnw13}.
Observe that
\[
\Big(\rmd(s\alpha)+\omega\Big)^n=
\Big(\rmd s\wedge\alpha+s\rmd\alpha+\omega\Big)^n=
n\rmd s\wedge\alpha\wedge\big(s\rmd\alpha+\omega\big)^{n-1}
\,.
\]
Therefore, we find $\varepsilon>0$ and a large positive constant $s_o$
such that, considered on $(-\varepsilon,\infty)\times M$,
$\rmd(s\alpha)+\omega$ is symplectic on $(-\varepsilon,\varepsilon)\times M$
and $(s_o,\infty)\times M$.
In fact, using Remark \ref{positivoncpxlinesimpliessympl},
$\alpha\wedge\big(s\rmd\alpha+\omega\big)^{n-1}$
is a positive volume form on $M$ for all positive $s$
because $\rmd\alpha$ and $\omega$ are positive on complex lines in $(\xi,J)$.
Therefore,
$\rmd(s\alpha)+\omega$ is a symplectic form on $(-\varepsilon,\infty)\times M$.
Via gluing along $\{0\}\times M\equiv M\subset\partial W$
using the above symplectic embedding
we build a symplectic manifold,
the so-called {\bf magnetic completion},
\[
\big(\widetilde{W},\widetilde{\Omega}\big):=
(W,\Omega)\cup
\Big([0,\infty)\times M,\rmd(s\alpha)+\omega\Big)
\,.
\]


\subsection{Truncating the magnetic completion}
\label{subsec:truncatiingmagcompl}

We continue the considerations in Section \ref{subsec:magsympl}.
In addition, let $N\subset(M,\xi)$ be a submanifold that carries the structure
of a bordered Legendrian open book decomposition $(B,\vartheta)$.
Choose contact embeddings of the model neighbourhood
of the binding $B$ (see Section \ref{subsec:locmodnearthebind})
and of the alternative model neighbourhood of the boundary $\partial N$
(see Section \ref{subsec:holommodelnearbound}) into $(M,\xi=\ker\alpha)$.
The push forward of the respective contact forms $\alpha_o$ and
the restriction of $-\rmd\Psi\circ J$ to the tangent spaces of $\{\Psi=\tfrac12\}$
are equal to $\rme^h\alpha$ for a smooth function $h$
on the images of the model neighbourhoods. 
Alter the contact form $\alpha$ by cutting off $h$ to $0$
near the boundary of these images,
so that the considered contact embeddings -- 
after shrinking the model neighbourhoods a bit -- 
are in fact strict.
Observe that this conformal change of the contact form $\alpha$
does not effect the property of $\rmd\alpha$
to be positive on complex lines in $(\xi,J)$.

Let $s_o$ be a positive real number and consider the truncation
\[
\big(\hat{W},\hat{\Omega}\big):=
(W,\Omega)\cup
\Big([0,2s_o]\times M,\rmd(s\alpha)+\omega\Big)
\;.
\]
The resulting contact embeddings of the model neighbourhoods
into $\{2s_o\}\times M$,
which are strict up to conformal factor $2s_o$
w.r.t.\ the contact form $2s_o\alpha$,
extend along collar directions to embeddings
of the neighbourhood $U_B$ constructed in
Lemma \ref{lem:semiglobunique} and of the neighbourhood
$U_{\partial N}$ constructed in Lemma \ref{lem:blockinglemma}
into $(-\infty,2s_o]\times M$ in the following way:
For the embedding of $U_B$ simply cross with the identity in
the $s$-direction;
for the embedding of $U_{\partial N}$ denote by $Y$ the vector field
on $\{\Psi\leq\tfrac12\}$ obtained by multiplying the Reeb vector field
of the contact form $-\rmd\Psi\circ J$ on the level sets of $\Psi$ with $-J$
and follow the flow lines of $Y$ in backward time
(taking time logarithmically).
Observe that $Y$ coincides with the Liouville vector field
$s\partial_s+r\partial_r+Y_{T^*F}$ along $\{u=0\}$,
see Section \ref{subsec:holommodelnearbound}.

The images of the neighbourhoods are again denoted by $U_B$ and
$U_{\partial N}$.
Taking $s_o$ sufficiently large we achieve that $U_B\cup U_{\partial N}$
fit into $[s_o,2s_o]\times M$.
Push forward yields an almost complex structure $J_o$ on
$U_B\cup U_{\partial N}$ that allows the conclusions
of Lemmata \ref{lem:semiglobunique} and \ref{lem:blockinglemma}.
We remark that
$J_o$ is invariant under translations in $s$-direction that shift off $U_B$;
along $[s_o,2s_o]\times\partial N$ the almost complex structure $J_o$ is
independent of $s$.

Moreover, the symplectic form $\rmd(s\alpha)$ is compatible
with $J_o$ on $U_B$ and on $U_{\partial N}$,
see Remark \ref{rem:symplecticrem} and Section \ref{subsec:holommodelnearbound},
resp.
Furthermore $J_o$ sends $\partial_s$ to the Reeb vector field and
preserves the contact distribution.
Applying \cite[Proposition 2.63(i)]{mcsa98} to the symplectic bundle
\[
\big(\xi,\rmd(s\alpha)\big)\lra[s_o,2s_o]\times M
\]
we extend $J_o$ to an almost complex structure on
$[s_o,2s_o]\times M$ keeping the properties just listed.
In particular, $\{2s_o\}\times M$ is a $J_o$-convex boundary
independently of the conformal factor $2s_o$,
see Section \ref{subsec:psicon}.

Placing the whole scenario to $[s_o,2s_o]\times M$ for $s_o$
sufficiently large we can additionally assume that $J_o$ is also tamed by
the symplectic form $\rmd(s\alpha)+\omega$ on $[s_o,2s_o]\times M$.
This is possible because for $s_o$ sufficiently large
$\rmd(s\alpha)=\rmd s\wedge\alpha+s\rmd\alpha$
dominates
$\rmd(s\alpha)+\omega=\rmd s\wedge\alpha+(s\rmd\alpha+\omega)$
on $J_o$-complex lines.
With \cite[Proposition 2.51]{mcsa98}
we extend $J_o$ to a tamed almost complex structure
$\hat{J}$ on $\big(\hat{W},\hat{\Omega}\big)$ that restricts to $J$ on $W$
and to $J_o$ on $[s_o,2s_o]\times M$.
Moreover, $\{2s_o\}\times M\subset\partial\hat{W}$ is a $\hat{J}$-convex
boundary component.
We will refer to the construction of $\big(\hat{W},\hat{\Omega},\hat{J}\big)$
as {\bf magnetic collar extension}.


\subsection{Deforming the Truncation}
\label{subsec:deformthetrunc}

Assuming exactness of $\omega|_{TN}$ in the situation of Sections
\ref{subsec:magsympl} and \ref{subsec:truncatiingmagcompl} the
symplectic form in the magnetic collar extension
$\big(\hat{W},\hat{\Omega},\hat{J}\big)$ can be assumed to satisfy
\[
\hat{\Omega}=2s_o\rmd\alpha
\qquad\text{on}\quad
T\big(\{2s_o\}\times N\big)
\]
after deformation:

Write $\omega=\rmd\beta$ in a neighbourhood of $N$
taking a neighbourhood that strongly deformation retracts to $N$. 
Extend $\beta$ to a $1$-form on $M$ that vanishes outside
a larger neighbourhood.
Define a $2$-form $\eta:=\omega-\rmd(\varrho\beta)$
on $[s_o,2s_o]\times M$, where $\varrho$ is a smooth function
that vanishes on $\{s\leq s_o\}$, equals $1$ on $\{s\geq2s_o\}$,
and satisfies $0\leq\varrho'\leq2/s_o$.
We claim that for $s_o$ sufficiently large
\[
\Big([s_o,2s_o]\times M,\rmd(s\alpha)+\eta\Big)
\]
is symplectic.
Indeed, spelling out the $n$-th power of $\rmd(s\alpha)+\eta$ we find
\[
n\rmd s\wedge\big(\alpha-\varrho'\beta\big)
\wedge\Big(s\rmd\alpha+\omega-\varrho\rmd\beta\Big)^{n-1}
\,.
\]
For $s_o$ sufficiently large $\alpha-\varrho'\beta$ will be a contact form
on all slices $\{s\}\times M$, $s\in[s_o,2s_o]$, so that, restricted to the
related contact structures, $(s\rmd\alpha)^{n-1}$ is a positive volume form.
Making $s_o$ even larger $(s\rmd\alpha)^{n-1}$ dominates lower order terms
in the $(n-1)$-st power of $s\rmd\alpha+\omega-\varrho\rmd\beta$
restricted to the mentioned contact structures.

Starting with the deformed symplectic structure corresponding
to $\rmd(s\alpha)+\eta$ the tamed complex structure $\hat{J}$
can be constructed as in Section \ref{subsec:truncatiingmagcompl}.
In order to achieve that $\rmd(s\alpha)$ dominates
\[
\rmd(s\alpha)+\eta=
\rmd s\wedge\big(\alpha-\varrho'\beta\big)
+\Big(s\rmd\alpha+\omega-\varrho\rmd\beta\Big)
\]
on $J_o$-complex lines on $[s_o,2s_o]\times M$
simply choose $s_o$ sufficiently large.

We will refer to the construction of
$\big(\hat{W},\hat{\Omega},2s_o\alpha,\hat{J}\big)$
as {\bf deformed magnetic collar extension}.


\subsection{Gromov compactness}
\label{subsec:gromovcomp}

We consider a deformed magnetic collar extension of the $\Omega$-tamed
almost complex structure $(W,J)$ as in Section \ref{subsec:deformthetrunc}.
The resulting tamed almost complex manifold together with the choice
of contact form on the $J$-convex boundary component is denoted by
$(W,\Omega,\alpha,J)$.
In particular, the restriction of the $2$-forms
$\Omega$ and $\rmd\alpha$ to the tangent spaces
of the bordered Legendrian open book
$(N,B,\vartheta)\subset(M,\xi=\ker\alpha)$
are equal as $2$-form on $N$.

Notice, that the regular set $N^*$ of $N$
(see Section \ref{subsec:legopenbooks})
is a {\bf totally real} submanifold of $(W,J)$,
i.e.\ $TN^*\cap JTN^*$ is the zero section.
Indeed, denoting by $E$ the real linear span of $v,Jv$
for a given tangent vector $v\in T_pN^*$
the only possibility for the complex line $E$ to be tangent to $N^*\subset M$
is to be tangent to the page of $(B,\vartheta)$ through $p$,
which is Legendrian w.r.t.\ $\xi=TM\cap JTM$.
Positivity of $\rmd\alpha$ on complex lines in $(\xi,J)$
implies the vanishing of $v$ because the pages of $(B,\vartheta)$
are Legendrian submanifolds, and, hence, have Lagrangian tangent spaces
inside $(\xi,\rmd\alpha)$.

Via the neighbourhood $U_B\subset W$ of the binding $B\subset N$
constructed in Lemma \ref{lem:semiglobunique} we obtain an embedding
relative boundary, a so-called {\bf local Bishop filling},
\[
F\co
\Big(
(0,\delta)\times\D\times B,
(0,\delta)\times\partial\D\times B
\Big)\lra(W,N^*)
\]
for some $\delta>0$ such that for all $(\varepsilon,b)\in(0,\delta)\times B$
the maps $u_{\varepsilon,b}=F(\varepsilon,\,.\,,b)$ are $J$-holomorphic discs
$(\D,\partial\D)\ra(W,N^*)$.
Furthermore $F$ extends smoothly to a map defined on
$[0,\delta)\times\D\times B$ that maps $\{0\}\times\D\times B$ to $B$ via
$(\varepsilon,z,b)\mapsto b$.

We consider the {\bf moduli space} $\MM$ of $J$-holomorphic discs
$u\co(\D,\partial\D)\ra(W,N^*)$ that are homologous in $W$ relative $N^*$
to one of the Bishop discs $u_{\varepsilon,b}$.
For all $u\in\MM$ we have the following:

\begin{enumerate}
\item The degree of the map $\vartheta\circ u\co\partial\D\ra S^1$,
the so-called {\bf winding number} of $u$, is equal to $1$.
The boundary lemma of E.\ Hopf implies,
that $u(\partial\D)$ is an embedded curve in $N^*$
positively transverse to $\xi$.
Hence, $u$ is simple, see Section \ref{subsec:psicon}.
\item We find a $2$-chain $C$ in $N$ with boundary $-u(\partial\D)$
such $u(\D)+C$ is null-homologous in $W$.
Therefore, by applying Stokes theorem twice, we get that the
{\bf symplectic energy} of $u$ satisfies
\[
0<\int_{\D}u^*\Omega=-\int_{C}\Omega=\int_{\partial\D}u^*\alpha\,,
\]
as $\Omega=\rmd\alpha$ on $TN$.
Denote by $\rmd\vartheta$ the pullback of $\rmd\theta$ along
$\vartheta\co N\setminus B\ra S^1$.
According to our co-orientation convention in
Section \ref{subsec:legopenbooks} and the local models in
Sections \ref{subsec:locmodnearthebind} and
\ref{subsec:locmodnearthebdary}
we observe that $\alpha|_{TN}=f\rmd\vartheta$ for a non-negative function
$f$ on $N$, that vanishes precisely along the singular set $B\cup\partial N$.
Consequently, we get {\bf uniform energy bounds}
\[
0<\int_{\D}u^*\Omega=
\int_{\partial\D}u^*f\cdot(\vartheta\circ u)^*\rmd\theta\leq
2\pi\max f\,.
\]
\item The restriction of $u$ to $\partial\D$ is {\bf uniformly bounded away
from the boundary} $\partial N$ as the intersection of $u(\D)$ with
$U_{\partial N}$ is empty by Lemma \ref{lem:blockinglemma}.
Recall that if $u(\D)$ intersects the neighbourhood $U_B$ of the binding
then $u$ is a reparametrisation of a local Bishop disc $u_{\varepsilon,b}$,
see Lemma \ref{lem:semiglobunique}.
We {\bf truncate} the moduli space $\MM$ (keeping the notation) by removing
all holomorphic discs that are reparametrisations of $u_{\varepsilon,b}$ for
$(\varepsilon,b)\in(0,\delta/2)\times B$.
\end{enumerate}

\begin{rem}
 \label{rem:gromovcomp}
 Under the assumption of {\bf uniform} $C^0${\bf -bounds} for $\MM$, i.e.\
 that there exists a compact subset of $W$ that contains all holomorphic discs
 $u(\D)$, $u\in\MM$, we obtain:
 Any sequence of holomorphic discs in $\MM$ admits a Gromov converging
 subsequence, see \cite{fz15}.
 Observe that by $J$-convexity (see Section \ref{subsec:psicon}) no sequence of
 holomorphic discs $u(\D)$, $u\in\MM$, can escape the boundary component $M$ of $W$.

 The total winding number of the Gromov limit $\bfu$ must be $1$
 according to the properties of Gromov convergence, see \cite{fz15}.
 Moreover, all individual winding numbers of non-constant disc bubbles of $\bfu$
 are positive by positive transversality, see the boundary lemma to E.\ Hopf
 in Section \ref{subsec:psicon}.
 Therefore, $\bfu$ contains precisely one disc component
 $u_0$ that is necessarily simple and of winding number $1$.
 In particular, $u_0|_{\partial\D}$ is an embedding positively transverse to $\xi$.
 
 If the image of $\bfu$ intersects $U_B$, then $\bfu$ does it along
 the disc component $u_0$ by the maximum principle,
 see Section \ref{subsec:psicon}.
 This implies that $u_0$ is one of the Bishop discs $u_{\varepsilon,b}$
 by Lemma \ref{lem:semiglobunique}, so that there are in fact no bubbles
 in this situation.
 Indeed, potential sphere bubbles must be null-homologous or subject to
 an argument using the maximum principle.
 Therefore, any sequence of holomorphic discs in $\MM$ that
 Gromov converges to $\bfu$ with $u_0(\D)$ intersecting $U_B$
 converges in $C^{\infty}$ to $u_0$ up to reparametrisation.
 
 Consequently, $\MM$ can be compactified to $\overline{\MM}$ by adding all
 Gromov limits to $\MM$.
 The resulting moduli space $\overline{\MM}$ is compact in the sense that any
 sequence in $\overline{\MM}$ admits a Gromov converging subsequence.
 The resulting limit objects $\bfu$ share the properties mentioned in the proceeding
 remark.
 In particular, we obtain {\bf uniform gradient bounds near the boundary} w.r.t.\ a
 given background metric:
 There exists constants $\rho\in(0,1)$ and $C>0$ such that for all $\bfu$ in
 $\overline{\MM}$ we have that $|\nabla u_0|<C$ restricted to $\D\cap\{|z|\geq\rho\}$,
 where $u_0$ is the disc component of $\bfu$ parametrised such that
 $\vartheta\circ u_0(\rmi^k)=\rmi^k$ for $k=0,1,2$.
\end{rem}


\section{Symplectic cobordisms}
\label{sec:symplcobord}

A {\bf symplectic cobordism} is a compact $2n$-dimensional symplectic manifold
$(W,\Omega)$ with boundary $M:=\partial W$.
We assume that $(W,\Omega)$ is {\it connected} and oriented via $\Omega^n$. 
The {\bf odd-symplectic} form $\omega:=\Omega|_{TM}$ is closed and
maximally non-degenerate with $1$-dimensional kernel $\ker\omega$.
The line bundle $\ker\omega$ on $\partial W$ is trivialised by the restriction
of the Hamiltonian vector field of a collar neighbourhood parameter to
$M=\partial W\subset W$.
Therefore, one finds a $1$-form $\alpha$ on $M$ that does not vanish on $\ker\omega$.
The sign of $\alpha$ defines an orientation on each component of
$\partial W$ via the volume form $\alpha\wedge\omega^{n-1}$.
If the boundary orientation (induced by the outward pointing normal)
coincides with the one given by $\alpha$ we call the
components of $M=\partial W$ {\bf positive}, otherwise {\bf negative}.
We will write
\[
\partial(W,\Omega)=
-(M_-,\omega_-,\alpha_-)
+(M_+,\omega_+,\alpha_+)
\]
accordingly.
Due to the symplectic neighbourhood theorem there exist collar neighbourhoods
$[0,\varepsilon)\times M_-$, resp., $(-\varepsilon,0]\times M_+$,
such that the symplectic form $\Omega$ can be written as
$\rmd(s\alpha_{\mp})+\omega_{\mp}$
with $s\in[0,\varepsilon)$, resp., $s\in(-\varepsilon,0]$.
This allows gluing of symplectic cobordisms along boundary components
of opposite sign that are orientation reversing odd-symplectomorphic.
The most prominent examples of symplectic cobordisms have {\bf contact type} boundary,
i.e.\ the $1$-form $\alpha$ on $M$ can be chosen such that $\omega=\rmd\alpha$,
see \cite{mcd91}.
In the contact type context positive boundary components are called {\bf convex};
negative ones {\bf concave}.


\subsection{A directed symplectic cobordism}
\label{subsec:adirsymplcobor}

We consider a symplectic cobordism $(W,\Omega)$.
We assume that the boundary $\partial(W,\Omega)$
decomposes into concave boundary components,
whose union we denote by $(M_-,\omega_-,\alpha_-)$,
and positive boundary components,
whose union we denote by $(M_+,\omega_+,\alpha_+)$.
In addition, we require the {\bf weak-filling condition}:
$\alpha_+$ can be chosen to be a contact form,
which according to our conventions implies that
$\alpha_+\wedge(\rmd\alpha_+)^{n-1}$ and
$\alpha_+\wedge\omega_+^{n-1}$ are positive,
such that
\[
\alpha_+\wedge\big(f_+\rmd\alpha_++\omega_+\big)^{n-1}
>0
\]
for all non-negative smooth functions $f_+$ on $M_+$.
We call $(W,\Omega)$ a {\bf directed symplectic cobordism},
cf.\ \cite{egh00}.

\begin{rem}
\label{rem:sympcobvsfilling}
 Observe that if $M_-$ is empty, $(W,\Omega)$ will be a
 weak symplectic filling of the contact manifold $(M_+,\xi_+)$
 with contact structure $\xi_+:=\ker\alpha_+$.
 If additionally $(M_+,\xi_+)$ is the convex boundary
 of the symplectic cobordism $(W,\Omega)$ with $M_-=\emptyset$
 then $(W,\Omega)$ is a {\bf strong symplectic filling} of $(M_+,\xi_+)$.
\end{rem}

As in \cite{gz12,sz17} such symplectic cobordisms $(W,\Omega)$ can be used to verify
the strong Weinstein conjecture for the contact manifold that appears
as the concave boundary component of $(W,\Omega)$.
The $3$-dimensional variant is related to the ball theorem,
see \cite[Theorem 2.2 and Corollary 3.8]{gz13}:

\begin{thm}
 \label{thm:maindirectsyplcoborthm}
 Let $(W,\Omega)$ be a directed symplectic cobordism. 
 Assume that the contact manifold $(M_+,\xi_+)$ contains a submanifold $N$
 with non-empty boundary such that $N$ supports a bordered Legendrian open book
 $(B,\vartheta)$ and such that $\omega_+|_{TN}$ is exact.
 Furthermore assume that one of the following conditions is satisfied:
 \begin{enumerate}
 \item [(i)]
  The symplectic cobordism $(W,\Omega)$ is semi-positive.
 \item [(ii)]
 The second Stiefel--Whitney class $w_2(TN^*)$ of $N^*=N\setminus(B\cup\partial N)$
 vanishes; or $N^*$ is orientable and there exists a class in $H^2(W;\Z_2)$ that restricts to
 $w_2(TN^*)$.
 \end{enumerate}
 Then $M_-$ necessarily is non-empty and for any $\xi_-$-defining
 contact form there exists a null-homologous Reeb link.
\end{thm}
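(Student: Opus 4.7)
My approach is a contradiction argument. Assume either that $M_-$ is empty, or that for some $\xi_-$-defining contact form $\alpha_-$ on $M_-$ there is no null-homologous Reeb link. The aim is to produce a compact $1$-dimensional moduli space of Bishop-type holomorphic discs whose total boundary consists of a single weighted point, an impossibility.

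For the ambient geometry, apply the deformed magnetic collar extension of Section \ref{subsec:deformthetrunc} to the positive end of $(W,\Omega)$, yielding $(\hat W,\hat\Omega,2s_o\alpha_+,\hat J)$ in which $N$ sits in the pushed-out copy $\{2s_o\}\times M_+$ as a bLOB admitting a local Bishop family near the binding (Lemma \ref{lem:semiglobunique}) and blocked at its boundary (Lemma \ref{lem:blockinglemma}). If $M_-\neq\emptyset$, further attach the negative symplectization $\big((-\infty,0]\times M_-,\rmd(\rme^s\alpha_-)\big)$ and extend $\hat J$ by an SFT-type almost complex structure that is $s$-translation invariant, preserves $\xi_-$, and sends $\partial_s$ to the Reeb vector field of $\alpha_-$. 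Let $\MM$ denote the moduli space of simple $\hat J$-holomorphic discs $u\co(\D,\partial\D)\to(\hat W,N^*)$ homologous to a local Bishop disc $u_{\varepsilon,b}$, with M\"obius freedom fixed by $\vartheta\circ u(\rmi^k)=\rmi^k$ for $k=0,1,2$, truncated at $\varepsilon=\delta/2$ near the binding.

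Section \ref{subsec:gromovcomp} now supplies the necessary estimates: exactness of $\omega_+|_{TN}$ yields the uniform energy bound, Lemmas \ref{lem:semiglobunique} and \ref{lem:blockinglemma} give $C^0$-confinement away from the binding and from $\partial N$, $\hat J$-convexity of $\{2s_o\}\times M_+$ keeps discs from escaping positively, and positive transversality pins boundaries to $N^*$. Under the contradiction hypothesis any sequence that escapes into the negative symplectization would, by Hofer--SFT compactness, produce a broken building whose lower levels consist of finite-energy holomorphic planes asymptotic to periodic Reeb orbits of $\alpha_-$; the collection of asymptotic orbits assembles into a null-homologous Reeb link, since the full broken curve is a relative cycle in $\hat W$ relative to $N^*\cup B$, contradicting the hypothesis. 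Hence $\overline\MM$ is Gromov compact and its elements are, as in Remark \ref{rem:gromovcomp}, stable maps consisting of one principal disc plus possibly interior sphere bubbles.

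To convert this into a cobordism argument without the semi-positivity assumption (i), I invoke the polyfold machinery built in Sections \ref{sec:adelignemumfordtypespace}--\ref{sec:polyfoldperturbations}: the Deligne--Mumford-type space of $3$-marked boundary un-noded stable discs and the polyfold of stable disc maps equip $\overline\MM$ with an $\mathrm{sc}$-Fredholm structure of expected dimension $\dim B+1=n-1$. Evaluating at a generic $b\in B$ to cut the expected dimension down to $1$, I choose a small abstract sc-multisection perturbation supported away from the Bishop slice $\{u_{\varepsilon,b}:\varepsilon<\delta/3\}$, so that the perturbed moduli space is a compact oriented weighted branched $1$-manifold whose boundary on the Bishop side is the single weighted point $\{u_{\delta/2,b}\}$ and whose other boundary, corresponding to Gromov degenerations, is empty by the previous step. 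No compact weighted branched $1$-manifold can have boundary of that form, producing the required contradiction. The main obstacle is precisely this last step: without semi-positivity one cannot rule out interior sphere bubbles classically, and polyfold sc-multisections must be arranged both to achieve transversality and to preserve the Bishop slice as an unperturbed geometric boundary --- the support localisation provided by the polyfold of Section \ref{sec:polyfoldperturbations} together with the topological orientation data from (ii) are what make this possible.
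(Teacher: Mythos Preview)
Your overall architecture matches the paper's: complete the cobordism with a negative cylindrical end and a deformed magnetic collar on the positive end, run Bishop discs off the binding, argue by contradiction that the moduli space is confined to a compact set (otherwise Gromov--Hofer/SFT breaking produces a null-homologous Reeb link on $M_-$), and then use polyfold multisection perturbations together with the orientation data from (ii) to obtain a compact oriented weighted branched $1$-manifold with a single boundary point. So the strategy is right.

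There is, however, a genuine gap in your dimension reduction. You write ``evaluating at a generic $b\in B$ to cut the expected dimension down to $1$''. There is no globally defined evaluation map from the moduli space to $B$: the $b$-coordinate of a Bishop disc $u_{\varepsilon,b}$ makes sense only on the Bishop collar near the binding, not on discs whose boundaries have moved deep into $N^*$. For a general bordered Legendrian open book (nontrivial monodromy, pages not of the form $B\times[0,1]$) there is no retraction $N^*\to B$ one could compose with $u(1)$. So the codimension-$(n-2)$ constraint you need is not specified. The paper fixes this by choosing an embedded curve $\gamma\subset\vartheta^{-1}(1)$ from the binding to $\partial N$ through a basepoint $b_o\in B$ and imposing $u(1)\in\gamma$; since the marked-point condition already forces $u(1)\in\vartheta^{-1}(1)$, this is a globally well-defined codimension-$(n-2)$ incidence condition. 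On the Bishop collar it singles out the one-parameter family $u_{\varepsilon,b_o}$, so the truncated perturbed solution set becomes a compact branched $1$-manifold with boundary exactly $\{\bfu_{\delta/2,b_o}\}$, yielding the contradiction (the paper phrases this either via Salamon's classification of branched $1$-manifolds or via a Stokes argument with a bump form on~$\gamma$). Replacing your ``evaluate at $b$'' by ``impose $u(1)\in\gamma$'' repairs the argument. Two smaller points: you should also say a word about case~(i), which the paper handles classically via semi-positivity (no polyfolds needed; bubbling is ruled out by a dimension count for simple underlying curves), and you should make explicit that condition~(ii) enters precisely to orient the determinant line of the Cauchy--Riemann section over the polyfold, via homotopically unique (stable) trivialisations of $u^*TN^*$ along $\partial\D$.
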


Theorem \ref{thm:maindirectsyplcoborthm} part (i) is contained in \cite{mnw13}.
We include a variant of the argument in Section \ref{subsec:thesemiposcase} as guideline for
the polyfold proof of part (ii).
Following \cite{mcsa04} we call a $2n$-dimensional symplectic manifold $(W,\Omega)$
{\bf semi-positive} if the first Chern class $c_1$ of $(W,\Omega)$ satisfies the following
condition: $c_1(A)\geq0$ for all spherical homology $2$-classes $A$ of $W$ with
$c_1(A)\geq3-n$ and $\Omega(A)>0$.
If $2n\leq6$ the condition is automatic.
The proof of part (ii) of Theorem \ref{thm:maindirectsyplcoborthm}
is postponed to Section \ref{sec:polyfoldperturbations}.

We remark that the inclusion $N^*\subset W$ induces in cohomology
the restriction map $H^2(W;\Z_2)\ra H^2(N^*;\Z_2)$.
The long exact cohomology sequence with $\Z_2$-coefficients yields,
that the restriction map is surjective if and only if the co-boundary operator
$H^2(N^*;\Z_2)\ra H^3(W,N^*;\Z_2)$ vanishes.
Furthermore the choice of an orientation on $N^*$
induces an orientation on each page $\vartheta^{-1}(\theta)$, $\theta\in S^1$,
via the co-orientation induced by $\vartheta$.
The binding is oriented via the boundary orientation of a page.
Therefore, $N$ is orientable if and only if $N^*$ is.

Observe that the inclusion map $f\co N^*\subset N$
induces $TN^*=f^*TN$, so that naturality of
the Stiefel--Whitney classes implies
$w_2(TN^*)=f^*w_2(TN)$.
In particular, if $w_2(TN)$ vanishes so does $w_2(TN^*)$.
Further, if the fibration $\vartheta$ on $N^*$ is trivial
(as it is the case for the plastikstufe),
e.g.\ a product of the page $\vartheta^{-1}(1)$ with $S^1$,
the second Stiefel--Whitney class $w_2(TN^*)$ is equal to
$w_2\big(T\vartheta^{-1}(1)\big)$
according to the Whitney cross product formula for the total Stiefel--Whitney class,
cf.\ \cite[Chapter 17]{bred93}.
In general, taking a connection on the fibration $\vartheta$
we obtain a splitting of $TN^*$ into the vertical $\ker T\vartheta$
and horizontal subbundle.
As the horizontal subbundle is isomorphic to the trivial bundle $\vartheta^*TS^1$
Whitney sum formula for the total Stiefel--Whitney class implies
$w_2(TN^*)=w_2\big(\!\ker T\vartheta\big)$.
The square product of the $2$-dimensional Klein bottle shows
that the latter not always equals the second Stiefel--Whitney class of a fibre.

\begin{rem}
\label{rem:weakfillingexcludedbysmallblob}
Consider a symplectic cobordism $(W,\Omega)$ that satisfies all the requirements
of Theorem \ref{thm:maindirectsyplcoborthm}.
Notice that if $M_-=\emptyset$, then $(W,\Omega)$ would be a weak symplectic filling
of $(M_+,\xi_+)$.
Hence, by Theorem \ref{thm:maindirectsyplcoborthm} no such weak symplectic filling
can exist.
\end{rem}

\begin{rem}
\label{rem:exactnessalongblob}
The exactness requirement for $\omega_+|_{TN}$
in Theorem \ref{thm:maindirectsyplcoborthm}
is fulfilled e.g.\ if $(M_+,\xi_+)$ is the convex boundary of $(W,\Omega)$
or if the Legendrian open book $N$ is small.
\end{rem}


\subsection{Completing the cobordism}
\label{subsec:complthecob}

We consider the directed symplectic cobordism $(W,\Omega)$ from
Theorem \ref{thm:maindirectsyplcoborthm}.
According to the contact type boundary condition along the negative 
boundary components of $(W,\Omega)$ the symplectic neighbourhood
theorem allows a description of a collar neighbourhood of $M_-\subset W$
as $[0,\varepsilon)\times M_-$ such that the symplectic form equals 
$\Omega=\rmd(\rme^s\alpha_-)$ with $s\in[0,\varepsilon)$.
Therefore, a partial completion over the concave boundary of $(W,\Omega)$
can be given by
\[
\Big((-\infty,0]\times M_-,\rmd(\rme^s\alpha_-)\Big)
\cup
(W,\Omega)
\]
via gluing along $\{0\}\times M_-\equiv M_-$.
Any other contact form defining $\xi_-=\ker\alpha_-$ on $M_-$
can be realised as the restriction to the tangent spaces of a graph over $M_-$
inside $(-\infty,\varepsilon)\times M_-$ up to a positive constant factor,
cf.\ \cite[Section 3.3]{gz12}.
A change of $(W,\Omega)$ by adding the super-level set of the graph
to the directed symplectic cobordism will allow the verification of the strong
Weinstein conjecture for a particular choice of contact form as announced in
Theorem \ref{thm:maindirectsyplcoborthm}.
In fact,
we will assume that $\alpha_-$ is a generic perturbation of a given $\xi_-$-defining
contact form which allows an application of the Gromov--Hofer compactness theorem
as formulated in \cite{behwz03}.
This is justifiable with the Arzel\`a--Ascoli theorem,
cf.\ \cite[Section 6.4]{gz12} and \cite[Section 6]{sz17}.

On the {\bf negative end} $\big((-\infty,0]\times M_-,\rmd(\rme^s\alpha_-)\big)$
we choose a shift invariant almost complex structure $J_-$
that sends the Liouville vector field $\partial_s$ to the Reeb vector field of $\alpha_-$
and leaves $\xi_-$ invariant such that $J_-$ is compatible with the symplectic structure
induced by $\rmd\alpha_-$ on $\xi_-$.
Extend $J_-$ to a tamed almost complex structure $J$ on $(W,\Omega)$ such that
$\xi_+=TM_+\cap JTM_+$ and the positive boundary $(M_+,\xi_+)$ of $(W,\Omega)$
is $J$-convex, see \cite[Theorem D]{mnw13}.
Further, we glue the deformed magnetic collar extension constructed in Sections
\ref{subsec:truncatiingmagcompl} and \ref{subsec:deformthetrunc} along
$M_+\equiv\{0\}\times M_+$ to build (keeping the notation) a symplectic manifold 
\[
\big(\hat{W},\hat{\Omega}\big):=
\Big((-\infty,0]\times M_-,\rmd(\rme^s\alpha_-)\Big)
\cup
(W,\Omega)\cup
\Big([0,2s_o]\times M_+,\rmd(s\alpha_+)+\eta\Big)
\,.
\]
The resulting almost complex structure is denoted by $\hat{J}$.
Notice that $\hat{J}$ equals $J_o$ on the neighbourhoods
$U_B$ and $U_{\partial N}$ of $B$ and $\partial N$, resp.
Here we think of $B$ and $\partial N$ as subsets of $M_+\equiv\{2s_o\}\times M_+$.
In particular, the results from Section \ref{subsec:gromovcomp} are available.


\subsection{The semi-positivity case}
\label{subsec:thesemiposcase}

We prove Theorem \ref{thm:maindirectsyplcoborthm} under assumption (i).
Recall the moduli space $\MM$ introduced in Section \ref{subsec:gromovcomp}.
To cut out the M\"obius reparametrisation group geometrically we define the moduli
space $\MM_{1,\rmi,-1}$ to be the set of all holomorphic discs $u\in\MM$
such that  $\vartheta\circ u(\rmi^k)=\rmi^k$ for $k=0,1,2$.
This allows to fix the disc reparametrisations for sequences in $\MM_{1,\rmi,-1}$
in the compactness formulation in Remark \ref{rem:gromovcomp}.

We choose a base point $b_o$ of $B$ and an embedded curve
$\gamma$ inside the page $\vartheta^{-1}(1)$ that connects $B$ and $\partial N$
such that $\gamma$ is given by $\{b_o\}\times\big(D^2\cap\R_{\geq0}\big)$ in the
model description in Section \ref{subsec:locmodnearthebind}.
We define the moduli space $\MM_{\gamma}$ to be the set of all holomorphic discs
$u\in\MM_{1,\rmi,-1}$ with $u(1)\in\gamma$.
In other words $\MM_{\gamma}$ consists of all $u\in\MM$ such that
\[
u(1)\in\gamma
\,,\quad
\vartheta\circ u(\rmi)=\rmi
\,,\quad
\vartheta\circ u(-1)=-1
\,.
\]

The Maslov index of the Fredholm problem defined by $\MM$ equals $2$
(see \cite[Proposition 8]{nie06} or \cite{gz10}).
With \cite[Theorem 4.4]{zeh15} there is a generic choice of $\hat{J}$
such that with the first dimension formula in \cite[Theorem 3.7]{zeh15}
(successively taking relations $R=\emptyset$,
$R=\vartheta^{-1}(1)\times\vartheta^{-1}(\rmi)\times\vartheta^{-1}(-1)$,
and $R=\gamma\times\vartheta^{-1}(\rmi)\times\vartheta^{-1}(-1)$)
the moduli spaces $\MM$, $\MM_{1,\rmi,-1}$, and $\MM_{\gamma}$
are smooth manifolds of dimension $n+2$, $n-1$, and $1$, resp.
By \cite[Remark 3.6]{zeh15} we can assume
that the generic perturbation of $\hat{J}$ is supported
in the complement of the union of the negative end $(-\infty,0]\times M_-$ of $\hat{W}$
and $U_B\cup U_{\partial N}$.
This is because all local Bishop discs are Fredholm regular by
\cite[Proposition 9]{nie06} and because all holomorphic discs that are contained
completely inside $\big((-\infty,0]\times M_-\big)\cup U_B\cup U_{\partial N}$ are
the local Bishop discs.

Moreover, the boundary component of $\MM_{1,\rmi,-1}$
that corresponds to the local Bishop filling $F$
has a collar neighbourhood in $\MM_{1,\rmi,-1}$
diffeomorphic to $[0,1)\times B$ via $F$.
This results in a collar neighbourhood
$[0,1)\times\{b_o\}$ in $\MM_{\gamma}$,
see Section \ref{subsec:gromovcomp}.
By the blocking property of $U_{\partial N}$ (see Section \ref{subsec:holomblockbound})
the evaluation map $\MM_{\gamma}\ra\gamma$, $u\mapsto u(1)$, is not surjective. 
Invariance of the mod-$2$ degree for proper maps, which counts the number of preimages
modulo $2$, implies that $\MM_{\gamma}$ cannot be compact.

\begin{proof}[{\bf Proof of Theorem \ref{thm:maindirectsyplcoborthm} part (i)}]
Arguing by contradiction we suppose that there is a compact subset $K$ of $\hat{W}$
such that the holomorphic discs $u(\D)$ are contained in $K$ for all
$u\in\MM_{\gamma}$.
In this situation $\MM_{\gamma}$ can be compactified in the sense of Gromov,
see Remark \ref{rem:gromovcomp}.
Observe that Gromov limiting stable holomorphic discs are contained in $K$ also
and have precisely one disc component that in addition must be simple.
With \cite{zeh15} we can assume by an additional {\it a priori} perturbation of $\hat{J}$
that the moduli spaces of simple stable maps that cover the stable maps in the Gromov
compactification of $\MM$ are cut out transversally.
This perturbation can be supported in the complement of the union of
$(-\infty,0]\times M_-$ and $U_B\cup U_{\partial N}$ because no holomorphic
sphere can stay inside $\big((-\infty,0]\times M_-\big)\cup U_B\cup U_{\partial N}$
completely by the maximum principle.
But, similarly to the computations in \cite[Section 6.3]{gz12}, the moduli spaces of the
covering simple stable holomorphic discs are of negative dimensions, hence, empty.
This argument uses the semi-positivity assumption.
Consequently, there is no bubbling off for the moduli space $\MM_{\gamma}$; in other
words $\MM_{\gamma}$ is compact.
This is a contradiction and therefore a compact subset $K$ of $\hat{W}$ that contains all
holomorphic discs that belong to $\MM_{\gamma}$ cannot exists.

Consequently, $M_-$ is necessarily non-empty.
Moreover, for any choice of $\xi_-$-defining contact form $\alpha_-$ there exists a
null-homologous Reeb link by the remarks made in Section \ref{subsec:complthecob}.
The relevant formulation of Gromov--Hofer convergence is obtained by combining
the convergences statements in \cite{hwz98,hwz03} with \cite{fz15}.
\end{proof}


\section{A Deligne--Mumford type space}
\label{sec:adelignemumfordtypespace}

In Section \ref{sec:polyfoldperturbations} the proof of Theorem \ref{thm:maindirectsyplcoborthm}
under assumption (ii) will be given.
In preparation we discuss moduli spaces of stable nodal boundary un-noded discs.
We follow \cite[Section 2.1]{hwz-gw17}, \cite{cie18,hwz-dm12} and indicate modifications necessary
in the presence of boundaries.


\subsection{Boundary un-noded nodal discs}
\label{subsec:boundunnodnoddisc}

Let $S$ be an oriented surface that is equal to the disjoint union of one closed disc
and a (possibly empty) finite collection of spheres.
All connected components of $S$ are provided with the standard orientation.
Let $j$ be an orientation preserving complex structure on $S$ turning $(S,j)$
into a Riemann surface with boundary, i.e.\ $(S,j)$ admits a holomorphic atlas
whose charts are given by open subsets of the closed upper half-plane.

We call a subset of $\Int(S)$ consisting of two distinct points a {\bf nodal pair}.
Each finite collection $D$ of pair-wise disjoint nodal pairs
defines an equivalence relation on $S$
calling two points equivalent if and only if they from a nodal pair.
The set $S/D$ of equivalence classes is provided with the quotient space topology.

Let $D$ be a finite collection of pair-wise disjoint nodal pairs such the quotient space
$S/D$ is simply connected.
We call $(S,j,D)$ a {\bf boundary un-noded nodal disc}.
A point of $S$ that belongs to a nodal pair in $D$ is called a {\bf nodal point}.
The set of nodal points is denoted by $|D|$.
Observe that $|D|$ and $\partial S$ are disjoint.

Let $m_0,m_1,m_2$ be pair-wise distinct {\bf marked points} on the boundary
$\partial S$ ordered according to the boundary orientation of $\partial S$.
We call $\big(S,j,D,\{m_0,m_1,m_2\}\big)$ a {\bf marked boundary un-noded nodal disc}.
Two marked boundary un-noded nodal discs
\[
\big(S,j,D,\{m_0,m_1,m_2\}\big)
\quad
\text{and}
\quad
\big(S',j',D',\{m'_0,m'_1,m'_2\}\big)
\]
are {\bf equivalent} if there exists a diffeomorphism $\varphi\co S\ra S'$
such that $\varphi^*j'=j$,
the injection $D\ra D'$ defined by
$\{\varphi(x),\varphi(y)\}\in D'$ for all $\{x,y\}\in D$
is onto, and
$\varphi(m_k)=m'_k$ for $k=0,1,2$.
Observe that $\varphi$ necessarily preserves orientations.


\subsection{Domain stabilisation\label{subsec:domainstabilisation}}

In Section \ref{subsec:boundunnodstabdiscs}
boundary un-noded nodal discs will appear
as the domain of stable maps.
If the domain nodal discs have sphere components,
the nodal discs will be unstable. 
In order to obtain a natural groupoidal structure on the space of marked boundary
un-noded nodal discs we have to stabilise these by adding marked points.
A point that is a marked point or a nodal point is called {\bf special}.
We call a connected component $C$ of $S$ {\bf stable} if the number of
special points on $C$ is greater or equal than $3$.
In particular the disc component of $S$ is stable.

We consider equivalence classes of stable discs $\big[S,j,D,\{m_0,m_1,m_2\},A\big]$
where $\big(S,j,D,\{m_0,m_1,m_2\}\big)$ is a marked boundary un-noded nodal disc
as in Section \ref{subsec:boundunnodnoddisc} that we provide with an additional finite
set of {\bf auxiliary marked points} $A\subset S\setminus\partial S$ in the
complement of $|D|$ so that $\#\big((A\cup|D|)\cap C\big)\geq3$ for each sphere
component $C$ of $S$.
In particular, all components $C$ of $S$ are stable.
The {\bf equivalence relation} is given by diffeomorphisms $\varphi\co S\ra S'$
as in Section \ref{subsec:boundunnodnoddisc} such that in addition $\varphi$ maps
$A$ bijectively onto $A'$.

The set of all equivalence classes $\RR$ is a {\bf nodal Riemann moduli space}.
Given a non-negative integer $N$ we denote by $\RR_N\subset\RR$ the subset
of stable nodal discs that are equipped with precisely $N=\#A$ auxiliary marked points
so that $\RR$ is the disjoint union over all $\RR_N$, $N\geq0$.
The elements in $\RR_N$ can be represented by stable nodal discs 
$\big(S,j,D,\{m_0,m_1,m_2\},A\big)$ of different {\bf stable nodal type} $\tau$, which is an
isomorphism class of weighted rooted trees.
The vertices are given by the components of $S$, where the root corresponds to the disc
component.
All vertices are weighted by the number of (auxiliary) marked points on the corresponding
component of $S$.
The edge relation is given by the nodes in $D$.
Observe that for given $N$ the number of stable nodal types corresponding to stable discs
$\big(S,j,D,\{m_0,m_1,m_2\},A\big)$ with $N=\#A$ is finite so that $\RR_N$ is a finite disjoint
union of subsets of stable discs $\RR_{\tau}\subset\RR$ of the same stable nodal type $\tau$.


\subsection{Groupoid as an orbit space\label{subsec:groupoidasorbitspace}}

Let $\tau$ be a stable nodal type.
In order to rewrite $\RR_{\tau}$ as an orbit space we choose natural representatives
of the stable nodal marked discs $\big[S,j,D,\{m_0,m_1,m_2\},A\big]$ in $\RR_{\tau}$ as
follows:
We fix the oriented diffeomorphism types of $\C P^1$ and $\D$ for the components of $S$
including the choice $\{1,\rmi,-1\}$ for the marked points $\{m_0,m_1,m_2\}$, i.e.\
$m_k=\rmi^k$, $k=0,1,2$.
We denote by $\sigma$ the {\bf area form} on $S$ that is the Fubini--Study form on
the $\C P^1$ components and equals $\rmd x\wedge\rmd y$ on $\D$
taking conformal coordinates $x+\rmi y$.
Let $\JJ\equiv\JJ_S$ be the space of orientation preserving complex structures on $S$,
which equals the space of almost complex structures on $S$ tamed by $\sigma$,
cf.\ \cite{abb14,sal11}.
Furthermore we fix a selection of special points $|D|$ and $A$.
By $\GG\equiv\GG(S,D,\{1,\rmi,-1\},A)$ we denote the group of orientation preserving
diffeomorphisms of $S$ that preserve $\{1,\rmi,-1\}$ point-wise and inject $A$ onto $A$
and $D$ onto $D$, resp.

We identify the nodal Riemann moduli space $\RR_{\tau}$ with the orbit space
\[
\RR_{\tau}=\JJ/\GG\,
\qquad\text{via}\quad
\big[S,j,D,\{1,\rmi,-1\},A\big]\equiv [j]\,,
\]
cf.\ \cite[p.~612]{rs06} or \cite[Section 4.2]{wen10}.
The action
\[
\GG\times\JJ\lra\JJ
\,,\quad(\varphi,j)\longmapsto\varphi^*j\,,
\]
of $\GG$ on $\JJ$ is given by the pull back
\[
\varphi^*j:=T_{\varphi}\varphi^{-1}\circ j_{\varphi}\circ T\varphi
\]
for $\varphi\in\GG$ and $j\in\JJ$.
By \cite[Lemma 7.5]{rs06} or \cite[Lemma 4.2.8]{wen10} this action is proper, see also
Remark \ref{rem:topologyfixedstabletype} below.
The action is free if and only if all isotropy subgroups $\GG_j$, $j\in\JJ$, of
$j$-holomorphic maps in $\GG$ are trivial.
The action is locally free because all isotropy subgroups $\GG_j$ are finite by the
following Remark \ref{rem:isotropyisfinite}:

\begin{rem}
 \label{rem:isotropyisfinite}
 Each connected component of $S$ is provided with at least $3$ special points,
 see Section \ref{subsec:domainstabilisation}.
 Hence, all {\bf isotropy subgroups} $\GG_j$ {\bf are finite}:
 
 To see this fix a biholomorphic identification of $(S,j)$ with the surface given by the
 disjoint union of $(\D,\rmi)$ and an at most finite number of copies of $(\C P^1,\rmi)$.
 This is possible by uniformisation, cf.\ \cite{abb14} and \cite[Theorem C.5.1]{mcsa04},
 \cite[Satz 5.33]{sal11} for boundary regularity. 
 For the marked points we can assume that $m_k=\rmi^k$, $k=0,1,2$.
 Then any automorphism in $\GG_j$ conjugates to an $\rmi$-holomorphic map that
 restricts to the identity on $(\D,\rmi)$ and defines M\"obius transformations of
 $(\C P^1,\rmi)$ corresponding to the maps induced between the not necessarily identical
 sphere components.
 We get a finite number of possibilities to obtain those M\"obius transformations each of
 which is permuting the set of special points that admits at least $3$ points by the stability
 condition.
\end{rem}

\begin{rem}
 \label{rem:topologyfixedstabletype}
 In order to describe the {\bf topology for fixed stable nodal type} provide $\JJ$ with the
 $C^{\infty}$-topology, which is metrisable, complete and locally compact by the
 Arzel\`a--Ascoli theorem.
 
 We provide $\RR_{\tau}=\JJ/\GG$ with the {\bf quotient topology} meaning that the open sets
 in $\JJ/\GG$ are precisely those, whose preimage under the quotient map
 $[\,.\,]\co\JJ\ra\JJ/\GG$ is open.
 In particular, $[\,.\,]$ is continuous by definition.
 The quotient map $[\,.\,]$ is open because for any open subset $\KK$ of $\JJ$ the
 $[\,.\,]$-preimage of $[\KK]$ is equal to the union of all $g\KK$, $g\in\GG$, which is open.
 Hence, a neighbourhood base of the topology on $\JJ/\GG$ is given by the family of subsets
 whose elements $[j]$ can be represented by complex structures $j$ belonging to an open
 subset of $\JJ$.
 Therefore, $\JJ/\GG$ is a second countable locally compact and,
 hence, paracompact topological space.
 
 In fact, $\RR_{\tau}=\JJ/\GG$ is Hausdorff.
 This follows from the {\bf properness argument}
 as follows:
 Consider a sequence $u_{\nu}$ of equivalences
 \[
 u_{\nu}\co
 \big(S,j_{\nu},D,\{1,\rmi,-1\},A\big)
 \lra
 \big(S,k_{\nu},D,\{1,\rmi,-1\},A\big)
 \]
 with $j_{\nu}\ra j$ and $k_{\nu}\ra k$ in $\JJ$.
 We claim that $u_{\nu}$ has a $C^{\infty}$-convergent subsequence whose limit $u$
 will be an equivalence
 \[
 u\co
 \big(S,j,D,\{1,\rmi,-1\},A\big)
 \lra
 \big(S,k,D,\{1,\rmi,-1\},A\big)
 \]
 also, cf.\ \cite[Proposition 3.25]{hwz-dm12}.
 
 It suffices to proof $C^{\infty}$-convergence because the limit $u$ will be automatically an
 equivalence as $u$ restricts to a degree $1$ map on each component of $S$.
 Restricting $u_{\nu}$ to the components of $S$ we obtain sequences of
 $k_{\nu}$-holomorphic diffeomorphisms $v_{\nu}$ one sequence for each component
 of $(S,j_{\nu})$.
 As the degree of each $v_{\nu}$ equals $1$ viewed as map onto its image,
 the area $\int_Cv_{\nu}^*\sigma=\pi$,
 $C$ being $\D$ or $\C P^1$,
 is uniformly bounded via the transformation formula.
 Hence, we find a subsequence of $u_{\nu}$ so that all corresponding sequences $v_{\nu}$
 converge in the sense of Gromov, see \cite{fz15}.
 Again using that the degree of all $v_{\nu}$ is $1$ we see that only one of the potential
 bubbles of each Gromov limit can intersect $0\in C$; the remaining bubbles would be
 necessarily constant as their area vanish.
 In other words, the chosen subsequence of $u_{\nu}$ converges in $C^{\infty}$, because
 the reparametrisations by $j_{\nu}$-holomorphic diffeomorphisms are fixed,
 i.e.\ equal $\id$,
 due to the stability condition.
 
 We verify the Hausdorff property, cf.\ \cite[Proposition 3.19]{hwz-dm12}:
 Assume that any pair of neighbourhoods of given points $[j]$ and $[k]$, resp.,
 intersects non-trivially.
 Taking shrinking neighbourhoods of the corresponding points $j$ and $k$ in $\JJ$
 we find $j_{\nu}\ra j$ and $k_{\nu}\ra k$ in $\JJ$ such that $[j_{\nu}]=[k_{\nu}]$ for all $\nu$.
 Then the above properness argument yields $[j]=[k]$.
\end{rem}


\subsection{Infinitesimal action\label{subsec:infinitesimalaction}}

We assume the situation of Section \ref{subsec:groupoidasorbitspace}.
Denote by $\Omega^0$ the Lie algebra given by the tangent space of $\GG$
at the identity diffeomorphism, which is the space of vector fields on $(S,\partial S)$
that are stationary on the set of all special points.
In other words, $\Omega^0$ is the set of all smooth sections of $TS$ that are
tangent to $\partial S$ and vanish on $|D|\cup\{1,\rmi,-1\}\cup A$.
We denote by $\Omega^{0,1}_j$ the space of all endomorphism fields of the
tangent bundle of $S$ that anti-commute with $j$.
Linearising the equation $j^2=-1$ we see that $\Omega^{0,1}_j$ is
the tangent space of $\JJ$ at $j$.
Viewing $\Omega^{0,1}_j$ as the set of all
$j$-complex anti-linear $TS$-valued differential forms on $S$,
the infinitesimal action is
\[
\Omega^0\times\Omega^{0,1}_j\lra\Omega^{0,1}_j
\,,\quad
(X,y)\longmapsto L_Xj+y
\,.
\]

A {\bf complex linear Cauchy--Riemann operator} $D$ is a $j$-complex linear operator
that maps smooth vector fields of $(S,j)$ to $\Omega^{0,1}_j$ such that
$D(fX)=\DB f\cdot X+fDX$ for all smooth vector fields $X$ and smooth functions $f$,
where $\DB$ is the composition of the exterior derivative with the projection of the space of
smooth $1$-forms onto those that anti-commute with $j$, see \cite[Appendix C.1]{mcsa04}.
Complex linearity can be expressed via $D(jX)=jDX$ for all vector fields $X$ of $S$.

In order to compute the Lie derivative $L_Xj$ we denote by $\CR j$ the uniquely
determined complex linear Cauchy--Riemann operator of the holomorphic line bundle
$(TS,j)$ (ignoring boundary points) that agrees with
\[
\DB X=\tfrac12\big(TX+\rmi\circ TX\circ\rmi\big)
\]
in local holomorphic coordinates $(\C,\rmi)$, cf.\ \cite[Remark C.1.1]{mcsa04}.
The local holomorphic representation $\DB$ of $\CR j$ implies that $\CR j$ induces a
{\bf real linear Cauchy--Riemann operator} on the bundle pair $\big((TS,T\partial S),j\big)$
taking local holomorphic coordinates in the closed upper half-plane.
Nevertheless the operator $\CR j\co\Omega^0\ra\Omega^{0,1}_j$
{\bf is not complex linear} because $j$ does not induce a complex linear vector space
structure on $\Omega^0$.
Indeed, $jX$ is not tangent to $\partial S$ for all boundary points of $S$
for which $X\in\Omega^0$ does not vanish.

With this preparation we compute
\[
L_Xj=\frac{\rmd}{\rmd t}\Big|_{t=0}\varphi_t^*j\,,
\]
where $\varphi_t$ is a smooth path in $\GG$ through $\varphi_0=\id$ with
\[
\frac{\rmd}{\rmd t}\Big|_{t=0}\varphi_t=X\,.
\]
In local holomorphic coordinates $\varphi_t^*j$ reads as
\[
T_{\varphi_t}\varphi_t^{-1}\circ\rmi\circ T\varphi_t\,.
\]
Taking the time derivative at $t=0$ and using $\varphi_t^{-1}=\varphi_{-t}$ yields
\[
-TX\circ\rmi+\rmi\circ TX=
\rmi\big(\rmi\circ TX\circ\rmi+TX\big)\,,
\]
so that the Lie derivative is equal to
\[
L_Xj=2j\CR jX
\,.
\]

\begin{rem}
 \label{rem:kernelistangentspace}
 Taking a path $\varphi_t$ in the isotropy subgroup $\GG_j$
 through $\varphi_0=\id$,
 meaning that the path $\varphi_t$ in $\GG$ satisfies $\varphi_t^*j=j$ for all $t$,
 we obtain $X\in\ker\CR j$ by taking time derivative.
 Conversely,
 assuming $\CR jX=0$ for a path $\varphi_t$ in $\GG$ through $\varphi_0=\id$
 yields
 \[
  \frac{\rmd}{\rmd t}\big(\varphi_t^*j\big)=
  \varphi_t^*\Big(2j\CR jX\Big)=0
  \,,
 \]
 i.e.\ that the path $\varphi_t^*j$ in $\JJ$ is constant, hence, equals $j$.
 In other words,
 the tangent space at the identity
 of the group of all $j$-holomorphic maps in $\GG$
 \[
 T_{\id}\GG_j=\ker\CR j
 \]
 equals the space of all $j$-holomorphic vector fields in $\Omega^0$.
\end{rem}

\begin{rem}
 \label{rem:automphiandbdcommute}
 The Cauchy--Riemann operator is {\bf conformally invariant}:
 Let $\varphi\co(S,j)\ra(S,k)$ be a holomorphic diffeomorphism.
 Then $\varphi^*\circ\CR k=\CR j\circ\varphi^*$.
 In particular, the Cauchy--Riemann operator $\CR j$
 commutes with the automorphisms
 of the Riemann surface with boundary $(S,j)$.
\end{rem}

\begin{rem}
 \label{rem:crvialc}
 Define a Riemannian metric $g_j$ on $(S,j)$ by setting
 \[
 g_j(v,w)=\tfrac12\big(\sigma(v,jw)+\sigma(w,jv)\big)
 \,,\quad v,w\in TS\,,
 \]
 where the area form $\sigma$ is the one chosen in
 Section \ref{subsec:groupoidasorbitspace}.
 In particular, the complex structure $j$ is orthogonal,
 so that the bilinear form $g_j$ is a {\bf Hermitian metric}.
 As any $2$-form on a surface is closed the $2$-form
 \[
 \sigma_j:=\tfrac12(\sigma+j^*\sigma)
 \]
 is symplectic and compatible with $j$.
 The latter means that $g_j(v,w)=\sigma_j(v,jw)$ for all $v,w\in TS$,
 so that the symplectic
 form $\sigma_j$ is compatible with $j$.
 
 Denoting the {\bf Levi-Civita connection} of $g_j$ by $\nabla\equiv\nabla^{g_j}$,
 \cite[Lemma 4.15]{mcsa98} says that closedness of $\sigma_j$ and integrability of $j$
 together are equivalent to $\nabla j=0$,
 cf.\ Remark \ref{rem:jorthogonalthenparallel} below.
 In particular, $\nabla(jX)=j\nabla X$ for all vector fields $X$ of $S$, so that $\nabla$ is a
 {\bf Hermitian connection}, i.e.\  $\nabla$ is a complex linear metric connection.
 Hence,
 \[
 \big(\nabla X\big)^{0,1}:=\tfrac12\big(\nabla X+j\circ\nabla X\circ j\big)
 \]
 defines a complex linear Cauchy--Riemann operator on $(TS,j)$,
 cf.\ \cite[Remark C.1.2]{mcsa04}.
 In local holomorphic coordinates we write $\nabla X=TX+\Gamma(\,.\,,X)$ with help
 of the Christoffel symbols, so that the map $X\mapsto\Gamma(\,.\,,X)$ is
 $\rmi$-complex linear.
 With symmetry of $\Gamma$ we get
 $
 \rmi\circ\Gamma(\rmi\,.\,,X)=
 \rmi\circ\Gamma(X,\rmi\,.\,)=
 -\Gamma(\,.\,,X)
 $,
 so that $\big(\nabla X\big)^{0,1}=\DB X$.
 Therefore,
 \[
 \big(\nabla X\big)^{0,1}=\CR jX\,.
 \]
 Furthermore we remark that the Hermitian connection $\nabla$ is uniquely determined
 by this equation, see \cite[Remark C.1.2]{mcsa04}.
 Consequently, $L_Xj=2j\CR jX$ can be obtained with the computations on
 \cite[Theorem C.5.1]{mcsa04} or \cite[p.~631]{rs06} as well.
\end{rem}

\begin{rem}
 \label{rem:jorthogonalthenparallel}
 We give an alternative argument for the fact that $j$ is parallel, which we used in Remark
 \ref{rem:crvialc}:
 For any non-vanishing tangent vectors $v,w$ at any given point of $S$ consider
 a curve $c$ tangent to $v$ and extend $w$ to a parallel vector field $X$ along $c$.
 As $X$ and $jX$ are orthogonal and as the length of $jX$ is constant $\nabla$ being metric
 implies that $\nabla_{\dot c}(jX)$ is perpendicular to the span of $\{X,jX\}$,
 and hence vanishes.
 Therefore, with the Leibnitz rule and parallelity of $X$ we get $(\nabla_{\dot c}j)X=0$.
 Consequently, $\nabla j=0$.
 
 Observe that the argument works for all metrics (and corresponding Levi-Civita connections)
 for which $j$ is orthogonal.
\end{rem}

\begin{rem}
 \label{rem:jautoareallisometries}
 All elements of $\GG_j$ are {\bf isometries} of $g_j$.
 Indeed, by finiteness of $\GG_j$ one finds for each $\psi\in\GG_j$ a natural number
 $k$ such that $\psi^k=\id$.
 On the other hand $\psi$ pulls $g_j$ back to $fg_j$ for some positive function $f$
 on $S$ by the description in Remark \ref{rem:crvialc} and the fact that all positive
 area forms on a surface are positively proportional.
 Therefore, the conformal factor of the pull-back of $g_j$ by $\psi^k$ becomes $f^k$,
 which necessarily is $1$.
 Hence, $f=1$.
\end{rem}

\begin{rem}
 \label{rem:incoisotropy}
 Let $(\varphi_t,j_t)$ be a smooth path in $\GG\times\JJ$ through
 $(\varphi_0,j_0)=(\psi,j)$ and denote the velocity vector field by
 \[
 \frac{\rmd}{\rmd t}(\varphi_t,j_t)=\big(X_t,y_t\big)
 \in T_{\varphi_t}\GG\times T_{j_t}\JJ
 \,.
 \]
 The corresponding velocity vector field of $\varphi_t^*j_t$ equals
 \[
 \frac{\rmd}{\rmd t}\big(\varphi_t^*j_t\big)=
 \varphi_t^*\Big(2j_t\CR {j_t}X_t+y_t\Big)
 \in T_{\varphi_t^*j_t}\JJ
 \,.
 \]
 If $\psi\in\GG_j$ we obtain with $(X,y)=(X_0,y_0)$ that
 \[
 \frac{\rmd}{\rmd t}\Big|_{t=0}\big(\varphi_t^*j_t\big)=
 \psi^*\Big(2j\CR jX+y\Big)
 \in T_j\JJ
 \,,
 \]
 which is equal to
 \[
 2j\CR j\big(\psi^*X\big)+\psi^*y
 \,.
 \]
 Indeed, this follows with Remark \ref{rem:automphiandbdcommute} or with Gau{\ss}'s
 {\it theorema egregium} which gives $\psi^*\circ\CR j=\CR j\circ\psi^*$ because all
 $\psi\in\GG_j$ are isometries of $g_j$, see Remark \ref{rem:jautoareallisometries}.
 Consequently, the corresponding infinitesimal action reads as
 \[
 T_{\psi}\GG\times\Omega^{0,1}_j\lra\Omega^{0,1}_j
 \,,\quad
 (X,y)\longmapsto2j\CR j\big(\psi^*X\big)+\psi^*y
 \]
 for all $\psi\in\GG_j$.
 Observe that the infinitesimal action $2j\CR j\oplus\mathbbm{1}$ at $(\id,j)$
 sends $\big(\psi^*X,\psi^*y\big)\in\Omega^0\times\Omega^{0,1}_j$
 to the same element in $\Omega^{0,1}_j$.
\end{rem}


\subsection{A Fredholm index\label{subsec:fredindofinfiact}}

We compute the Fredholm index of the $W^{1,3}$-Sobolev completed
Cauchy--Riemann operator $\CR j\co W^{1,3}\ra L^3$ induced by
the Cauchy--Riemann operator from Section \ref{subsec:infinitesimalaction}.

Ignoring zeros for the moment,
for each component $C$ of $S$
the Fredholm index is given by the Riemann--Roch \cite[Theorem C.1.10]{mcsa04}
applied to the $j$-complex $1$-dimensional bundle pair $(TC,T\partial C)$.
Namely, the Fredholm index
is the sum of the Euler characteristic of $C$
and the Maslov index of $(TC,T\partial C)$.
The Maslov index for $C=\D$ is $2$ by normalisation;
for $C=\C P^1$ twice the first Chern number,
i.e.\ twice the Euler characteristic, which gives $4$,
see \cite[Chapter C.3]{mcsa04}.
Hence, the Fredholm index is $3$ restricted to the disc component
and $6$ on the spheres.

Now we take the zeros into account.
Component-wise,
for each boundary zero we have to subtract $1$
from the computed Fredholm index;
for each interior zero we subtract $2$.
Adding up, we obtain that $\ind\CR j$ equals
\[
3-\#\{1,\rmi,-1\}-2\#\big((|D|\cup A)\cap\D\big)
+2\sum_C\Big(3-\#\big((|D|\cup A)\cap C\big)\Big)
\,,
\]
where the sum is taken over all sphere components $C$ of $S$.
This gives
\[
\ind\CR j=
-2\#|D|-2\#A
+6\big(\#\{C\}-1\big)
\,,
\]
where $\#\{C\}=\#D+1$ is the number of all components of $S$.
Using $\#|D|=2\#D$ we finally obtain
\[
\ind\CR j=2\big(\#D-\#A\big)
\,.
\]

On the other hand,
by a boundary version of the argument principle
(see \cite[Theorem A.5.4]{ah19}),
the Maslov index of $(TC,T\partial C)$
for each component $C$ of $S$
is the weighted sum of the
number of zeros counted multiplicities
of a non-zero element in the kernel of $\CR j$,
where interior zeros are counted twice.
As the corresponding Maslov index is $2$ on the disc component
and $4$ on the spheres, the kernel of $\CR j$ is trivial
due to the stability condition that the number of special points on each
component $C$ of $S$ is at least $3$.
This argument uses elliptic regularity
saying that the vector fields in $\ker\CR j$ are smooth,
see \cite{mcsa04}.
Therefore, one can show triviality of $\ker\CR j=T_{\id}\GG_j$
(see Remark \ref{rem:kernelistangentspace})
alternatively using finiteness of $\GG_j$
under the stability condition, see Remark \ref{rem:isotropyisfinite}. 

Using elliptic regularity as in \cite{mcsa04}
we see that the Cauchy--Riemann operator
$\CR j\co\Omega^0\ra\Omega^{0,1}_j$
is injective.
The image is closed and has codimension $2\big(\#A-\#D\big)$,
cf.\ \cite[Proposition 2.5]{hwz-gw17}.


\subsection{Interlude:\ Cayley transformation\label{subsec:cayleytransform}}

We provide $\Omega^{0,1}_j$ with the norm $|y|_j$
given by the maximum of point-wise
operator norms of $y$ w.r.t.\ the metric $g_j$.
Because $y$ is self-adjoint w.r.t.\ $g_j$
the norm $|y|_j$ is equal to the maximum of the square-root of the point-wise
eigenvalues of the $j$-complex linear endomorphism field $y\circ y$.
In particular, the resolvent $(1-y)^{-1}$ of $y$ at $1$ is defined provided $|y|_j<1$.
We obtain a homeomorphism
\[
\mathfrak{i}\co
\Omega^{0,1}_j\cap B_1(0)\lra\JJ
\,,\quad
0\longmapsto j
\,,
\]
defined on the open unit ball about zero via the conjugation
\[
y\longmapsto(1-y)j(1-y)^{-1}
\,,
\]
whose inverse is given by the
\[
k\longmapsto(k+j)^{-1}(k-j)
\,,
\]
cf.\ \cite[Proposition 1.1.6]{aud94}, \cite[Proposition 2.6.4]{mcsa17} or
\cite[p.~634]{rs06}.
Because $\GG_j$ acts by isometries of $g_j$
(see Remark \ref{rem:jautoareallisometries})
the conjugation map $\mathfrak{i}$ is $\GG_j$-equivariant.

We claim that $\JJ$ is a submanifold of the space $\Omega^1$
of all endomorphism fields of the tangent bundle of $S$
and that $\mathfrak{i}$ is a global chart.
The case of almost complex structures compatible with $\sigma$
essentially follows with the expositions in the above cited literature
\cite{aud94, mcsa17, rs06}.
We will follow \cite[Chapter I.7.3]{gz23}
taking the modifications for the case of almost complex structures
only tamed by $\sigma$ into account:

We call a not necessarily symmetric endomorphism field
$x\in\Omega^1$ positive and write $x>0$
provided that for all non-zero tangent vectors $v\in TS$
the quadratic form $g_j(v,xv)$ is positive.
As the kernel of a positive endomorphism field
$x\in\Omega^1$ is trivial
we see that the inverse $x^{-1}\in\Omega^1$ exists.
Therefore, for positive $x\in\Omega^1$
the endomorphism field $1+x$ is positive as well
such that the inverse $(1+x)^{-1}$ exists.
In fact, the half space of all positive
endomorphism fields $\Omega^1\cap\{x>0\}$
is an open cone in $\Omega^1$ closed under taking inverses.
As above we provide $\Omega^1$ with the norm $|x|_j$
given by the maximum of point-wise
operator norms of $x\in\Omega^1$ w.r.t.\ the metric $g_j$.

The {\bf Cayley transform}
\[
\CC(x):=(1-x)(1+x)^{-1}=:\tilde{y}
\]
defines a map
\[
\Omega^1\cap\{x>0\}\lra\Omega^1\cap B_1(0)
\,.
\]
Indeed, setting $v=(1+x)^{-1}w$ polarisation yields
$4g_j(v,xv)=|w|_j^2-|\tilde{y}w|_j^2$, so that $x$ is positive
if and only if $|\tilde{y}|_j<1$.
We remark that there is an alternative formula
\[
\CC(x)=(1+x)^{-1}(1-x)
\]
for the Cayley transform because $1+x$ and $1-x$ commute.
Setting
\[
\CC(\tilde{y}):=(1-\tilde{y})(1+\tilde{y})^{-1}
\]
we obtain a map
\[
\CC\co\Omega^1\cap B_1(0)\lra\Omega^1\cap\{x>0\}
\]
in the converse direction.
Again, this uses polarisation and
that $|\tilde{y}|_j<1$ implies triviality of the kernel of $1+\tilde{y}$.
Because the Cayley transform $\CC$ is involutive the map
$\CC\co\Omega^1\cap\{x>0\}\ra\Omega^1\cap B_1(0)$
is a diffeomorphism.

Extending the conjugation map
$\mathfrak{i}\co\tilde{y}\mapsto (1-\tilde{y})j(1-\tilde{y})^{-1}$
to the open set $\Omega^1\cap B_1(0)$
yields a smooth map
$\Omega^1\cap B_1(0)\ra\Omega^1$.
Observe, that $\mathfrak{i}(\tilde{y})$ is a complex structure
potentially reversing orientation.
Restricting $\mathfrak{i}$ to the $j$-complex anti-linear part
$\Omega^{0,1}_j$ of $\Omega^1$ we get
$\mathfrak{i}(y)=j\CC(-y)$ for all $y\in\Omega^{0,1}_j\cap B_1(0)$.
This defines an injection
\[
\mathfrak{i}=j\circ\CC\circ(-1)\co
\Omega^{0,1}_j\cap B_1(0)\lra\Omega^1
\,.
\]
In order to describe the image we observe that
\[
\CC\circ(-1)\co
\Omega^{0,1}_j\cap B_1(0)\lra\Omega^1\cap\{x>0\}\cap\{jx=x^{-1}j\}
\]
is a well defined homeomorphism with inverse $(-1)\circ\CC$.
Additionally, the multiplication map $j\co\Omega^1\ra\Omega^1$
is a diffeomorphism with inverse $-j$ and restricts to the homeomorphism
\[
j\co
\Omega^1\cap\{x>0\}\cap\{jx=x^{-1}j\}\lra\JJ
\,.
\]
Indeed, positivity of $x$ is equivalent to the positivity of
$\sigma_j(v,jxv)$ for all $v\in TS$.
Moreover, we find a smooth function $f$ on the surface $S$
such that the $2$-forms $\sigma$ and $\sigma_j$ satisfy $\sigma=f\sigma_j$.
Because $\sigma(w,jw)=f|w|_j^2$ is positive for all non-zero $w\in TS$
the function $f$ must be positive.
Hence, positivity of $x$ is equivalent to the positivity of
$\sigma(v,jxv)$ for all $v\in TS$,
as $\sigma_j$ and $\sigma$ are positively proportional.
It follows that the complex structure $jx$ is tamed by $\sigma$,
i.e.\ preserves the orientation of the Riemann surface $(S,j)$.
In total, the conjugation map
\[
\mathfrak{i}=j\circ\CC\circ(-1)\co
\Omega^{0,1}_j\cap B_1(0)\lra\JJ
\]
is a well defined homeomorphism.
The inverse is $k\mapsto-\CC(-jk)=(k+j)^{-1}(k-j)$,
where equality follows with the above alternative commuted formula
for the Cayley transform.
Furthermore $\mathfrak{i}=j\circ\CC\circ(-1)$
is the restriction of the diffeomorphism
obtained as the composite of
$\CC\circ(-1)\co\Omega^1\cap B_1(0)\ra\Omega^1\cap\{x>0\}$
with $j\co\Omega^1\ra\Omega^1$ where defined.
In other words,
the inverse $(-1)\circ\CC\circ(-j)$ of $j\circ\CC\circ(-1)$
serves as a global submanifold chart
of $\JJ\subset\Omega^1$ the model being
$\Omega^{0,1}_j\cap B_1(0)\subset\Omega^1$. 

In fact, $\JJ$ is a complex manifold:
A complex structure on the vector space $\Omega^{0,1}_j$
is given by $y\mapsto jy$.
An almost complex structure on $\JJ$
is given by
\[\mathfrak{i}(y)=(1-y)j(1-y)^{-1}\]
on the tangent space
$T_{\mathfrak{i}(y)}\JJ=\Omega^{0,1}_{\mathfrak{i}(y)}$
for all $y\in\Omega^{0,1}_j\cap B_1(0)$.
Taking derivative of the equation
\[\mathfrak{i}(y)(1-y)=(1-y)j\]
w.r.t.\ $y$ we get
$
T_y\mathfrak{i}(\dot y)(1-y)-\mathfrak{i}(y)\dot y=
-\dot y j
$
and, using $-\dot y j=j\dot y$, that
\[
T_y\mathfrak{i}(\dot y)=
\big(j+\mathfrak{i}(y)\big)\dot y(1-y)^{-1}
\]
for the linearisation of $\mathfrak{i}$ at $y\in B_1(0)$
for all tangent vectors $\dot y$ of
$B_1(0)\subset\Omega^{0,1}_j$.
Using
$\mathfrak{i}(y)\big(j+\mathfrak{i}(y)\big)=\big(j+\mathfrak{i}(y)\big)j$
we obtain
\[
\mathfrak{i}(y)\circ T_y\mathfrak{i}=T_y\mathfrak{i}\circ j
\,.
\]
In other words, the linearisation $T_y\mathfrak{i}$
is complex linear for all $y\in\Omega^{0,1}_j\cap B_1(0)$.
Therefore,
$\mathfrak{i}\co\Omega^{0,1}_j\cap B_1(0)\lra\JJ$
is biholomorphism.

\begin{rem}
 \label{rem:viahomospace}
 With \cite[Corollary 6.4]{abklr94} one finds a further action by conjugation via
 $y\mapsto\rme^yj\rme^{-y}$ for all $y\in\Omega^{0,1}_j$ defining a
 $\GG_j$-equivariant diffeomorphism $\Omega^{0,1}_j\ra\JJ$:
 To see this it is enough to argue fibre-wise.
 Identifying any given tangent space of $S$ with $\R^2$ we claim that the space of all
 orientation preserving complex multiplications $\JJ$ on $\R^2$
 is the homogeneous space $\PSl_2(\R)/\PSO_2$.
 Indeed, the group $\Gl_2^+(\R)$ of orientation preserving invertible linear maps on
 $\R^2$ acts transitively on $\JJ$ by conjugation $A\mapsto A\rmi A^{-1}$ and the isotropy
 subgroup at $\rmi$ is isomorphic to $\Gl_1(\C)$, cf.\ \cite[Proposition 2.48]{mcsa98}.
 Normalising via the determinant and dividing out $\pm 1$ this action descents to a
 transitive and faithful action of $\PSl_2(\R)$ with isotropy subgroup $\PSO_2$, as a
 conformal linear map that preserves the area necessarily preserves the metric.
 In particular, we see that $\JJ$ is the hyperbolic upper half-plane $\PSl_2(\R)/\PSO_2$,
 cf.\ \cite[Exercise 4.17]{mcsa98}.
 
 The Lie algebra of $\PSl_2(\R)$ decomposes as a vector space into the Lie algebra of
 $\PSO_2$ and the set $\Omega^{0,1}_{\rmi}$ of all $2\times 2$ matrices that
 anti-commute with $\rmi$.
 Observe that the elements of $\Omega^{0,1}_{\rmi}$ are symmetric and trace-free.
 Therefore, the exponential map of the tangent space of $\PSl_2(\R)/\PSO_2$ at $[1]$
 can be written as
 \[
 \Omega^{0,1}_{\rmi}\lra\PSl_2(\R)/\PSO_2
 \,,\quad\,
 Y\longmapsto\big[\rme^Y\big]
 \,.
 \]
 This map is a diffeomorphism because any $A\in\Sl_2(\R)$ can be written uniquely as
 $A=\rme^SR$ for $Y\in\Omega^{0,1}_{\rmi}$ and $R\in\SO_2$ by the polar form theorem.
 Therefore, the composition with $[A]\mapsto A\rmi A^{-1}$
 \[
 \Omega^{0,1}_{\rmi}\lra\JJ
 \,,\quad\,
 Y\longmapsto\rme^Y\rmi\rme^{-Y}
 \,,
 \]
 is the diffeomorphism we wanted.
 
 We remark that the composition with $[A]\mapsto A\cdot\rmi$, where $A\cdot\rmi$ denotes
 the action of $\PSl_2(\R)$ on the upper half-plane by M\"obius transformations (which
 preserve the hyperbolic metric) is the exponential map $Y\mapsto \rme^Y\cdot\rmi$ of the
 hyperbolic upper half-plane at $\rmi$.
\end{rem}


\subsection{The Kodaira differential\label{subsec:kodairadiff}}

We continue the considerations from Section \ref{subsec:fredindofinfiact}.
For all $j\in\JJ$ the operator $j\CR j\co\Omega^0\ra\Omega^{0,1}_j$ is injective and
\[
H^1_j:=\Omega^{0,1}_j/\im(j\CR j)
\]
has real dimension $2\big(\#A-\#D\big)$.
Moreover, the operator $j\CR j$ is $\GG_j$-equivariant
by Remark \ref{rem:automphiandbdcommute}.
Alternatively, argue with the {\it theorema egregium}
and with Remark \ref{rem:jautoareallisometries} 
as done in Remark \ref{rem:incoisotropy}.
Therefore, conjugation
\[
\psi^*y:=T_{\psi}\psi^{-1}\circ y_{\psi}\circ T\psi
\]
by elements $\psi\in\GG_j$ leaves $\im(j\CR j)$ invariant,
so that $\GG_j$ induces an action on $H^1_j$
via $\psi^*[y]:=[\psi^*y]$.

Observe, that the complex structure $y\mapsto jy$ on $\Omega^{0,1}_j$
introduced in Section \ref{subsec:cayleytransform}
does {\bf not} descent to a complex structure on the quotient $H^1_j$.
The reason is that the subspace $\im(j\CR j)$ of $\Omega^{0,1}_j$ is {\bf not}
$j$-invariant as the operator $j\CR j\co\Omega^0\ra\Omega^{0,1}_j$ is {\bf not} complex
linear, see Section \ref{subsec:infinitesimalaction}.

Choose a $\GG_j$-invariant complementary subspace $E_j\subset\Omega^{0,1}_j$
of $\im(j\CR j)$ so that the quotient map $y\mapsto[y]$ of
$\Omega^{0,1}_j$ onto $H^1_j$ restricts to a
$\GG_j$-equivariant linear isomorphism $E_j\ra H^1_j$.

\begin{exwith}
 \label{ex:l2othogonal}
 A $\GG_j$-invariant complementary subspace $E_j$
 can be defined as the orthogonal complement of $\im(j\CR j)$
 w.r.t.\ to the $L^2$-inner product on $\Omega^{0,1}_j$
 induced by a $\GG_j$-invariant metric on $S$.
 Such a metric can be obtained
 via averaging any given metric over the finite set $\GG_j$.
 The obtained $\GG_j$-invariant $L^2$-inner product on $\Omega^{0,1}_j$
 can be symmetrised as in Remark \ref{rem:crvialc}
 so that the action of $j$ will be orthogonal in addition.
 Alternatively,
 a natural choice for $\GG_j$-invariant metric on $S$
 would be the following:
 The area form $\sigma_j$ and the metric $g_j$,
 both being $\GG_j$-invariant,
 together determine a Hodge star operator
 on $TS$-valued differential forms on $S$,
 namely, via $y\mapsto-y\circ j$.
 Restricting to elements $y\in\Omega^{0,1}_j$
 this yields $y\mapsto jy$
 so that we obtain a $\GG_j$-invariant $L^2$-inner product
 on $\Omega^{0,1}_j$ by
 \[
 \langle y_1,y_2\rangle_j:=
 \frac12\int_Sy_1\wedge jy_2
 \,.
 \]
 The wedge product of two $TS$-valued differential forms
 is defined component-wise w.r.t.\ local $g_j$-orthonormal frames.
 Using conformal coordinates one shows that
 the integrand $y_1\wedge jy_2$ equals
 $2\Re(y_1\circ y_2)\,\sigma_j$,
 where, point-wise, $\Re(y_1\circ y_2)$ denotes the real part
 of the complex eigenvalue
 of the $j$-complex linear map $y_1\circ y_2\in\Omega^1$
 between complex lines.
 This shows $j$- and $\GG_j$-invariance of the metric
 \[
 \langle y_1,y_2\rangle_j=
 \int_S\Re(y_1\circ y_2)\,\sigma_j
 \,,
 \]
 whose induced norm $\|y\|_j$ is given by the square-root of
 the integral of the point-wise eigenvalues
 of the $j$-complex linear endomorphism field $y\circ y$
 over $S$ against $\sigma_j$.
 Nevertheless,
 the orthogonal complement $E_j$ of $\im(j\CR j)$ in $\Omega^{0,1}_j$
 of any $j$-invariant metric will not be invariant under $j$
 as $\im(j\CR j)$ is not invariant under $j$.
 In Section \ref{subsec:skyscraperdeformation} we will construct a complex linear
 complement $E_j$.
\end{exwith}

Consider a so-called {\bf deformation} of $\big(S,j,D,\{1,\rmi,-1\},A\big)$, which is a map
\[
\mathfrak{j}\co
(V_j,0)\lra(\JJ,j)
\,,\qquad
y\longmapsto j(y)
\,,
\]
defined on an open neighbourhood $V_j\subset E_j$ of $0$.
If $\mathfrak{j}$ is an embedding whose image $\mathfrak{j}(V_j)$ is transverse to the
orbits of $\GG$, then the deformation $\mathfrak{j}$ is called {\bf effective}, cf.\ the
\cite[Definition 4.2.13]{wen10} of a {\bf local slice} through $j$.
Transversality can be expressed via the invertibility of the so-called {\bf Kodaira differential}
\[
[T_y\mathfrak{j}]\co E_j\lra\Omega^{0,1}_{j(y)}\lra H^1_{j(y)}
\]
for all $y\in V_j$, which is the composition of the linearisation $T_y\mathfrak{j}$ with the
quotient map $[\,.\,]$.
Furthermore
we call the deformation $\mathfrak{j}$ {\bf symmetric}
if $V_j$ is $\GG_j$-invariant
(e.g.\ taking metric balls about $0\in E_j$
w.r.t.\ to the $L^{\infty}$-norm $|\,.\,|_j$
from the beginning of Section \ref{subsec:cayleytransform}
or the $L^2$-norm $\|\,.\,\|_j$ induced by $\langle \,.\,,\,.\,\rangle_j$
from Example \ref{ex:l2othogonal})
and $\mathfrak{j}$ is $\GG_j$-equivariant.
The latter means that for all $y\in V_j$ and $\psi\in\GG_j$ we have that
$\psi^*\big(\mathfrak{j}(y)\big)=\mathfrak{j}(\psi^*y)$ so that
\[
\psi\co
\big(S,j(\psi^*y),D,\{1,\rmi,-1\},A\big)
\lra
\big(S,j(y),D,\{1,\rmi,-1\},A\big)
\]
is a holomorphic isomorphism.
Finally, the deformation $\mathfrak{j}$ is called {\bf complex}
provided that the complementary subspace $E_j\subset\Omega^{0,1}_j$
is invariant under $j$, i.e.\ is a complex linear subspace,
and that $\mathfrak{j}$ is holomorphic
in the sense that the differential $T\mathfrak{j}$ is complex linear,
i.e.\
$\mathfrak{j}(y)\circ T_y\mathfrak{j}=T_y\mathfrak{j}\circ j$
for all $y\in V_j\subset E_j$.

\begin{exwith}
\label{ex:symmetricandeffective}
 The Cayley transformation from Section \ref{subsec:cayleytransform}
 or the conjugation by the exponential map studied in Remark \ref{rem:viahomospace}
 yield symmetric effective deformations $\mathfrak{j}$ restricting
 \[
 y\longmapsto(1+jy)j(1+jy)^{-1}
 \quad\text{or}\quad
 y\longmapsto\rme^{jy}j\rme^{-jy}
 \]
 to $V_j=E_j\cap\Omega^{0,1}_j\cap B_1(0)$
 or to $V_j=E_j$, resp.
 Indeed, the differentials at $0\in V_j$ in direction of $\dot y\in E_j$ are given by
 $\dot y\mapsto2\dot y$.
 For complex linear $E_j$ the corresponding deformation will be complex.
\end{exwith}

\begin{rem}
 \label{rem:naturalityofthekd}
 The {\bf Kodaira differential is natural} in the following sense:
 Consider deformations $\mathfrak{j}\co V_j\ni y\mapsto j(y)$ and
 $\mathfrak{k}\co V_k\ni z\mapsto k(z)$ of $\big(S,j,D,\{1,\rmi,-1\},A\big)$ and
 $\big(S,k,D,\{1,\rmi,-1\},A\big)$, resp.
 Choose a linear isomorphism $\zeta\co E_j\ra E_k$, which we will write as
 $\zeta\co y\mapsto z(y)$.
 Let $y\mapsto\varphi(y)$ be a smooth family of diffeomorphisms
 \[
 \varphi(y)\co
 \big(S,j(y),D,\{1,\rmi,-1\},A\big)
 \lra
 \Big(S,k\big(z(y)\big),D,\{1,\rmi,-1\},A\Big)
 \]
 defined on $V_j\cap\zeta^{-1}(V_k)$ and deforming the equivalence
 \[
 \varphi=\varphi(0)\co
 \big(S,j,D,\{1,\rmi,-1\},A\big)
 \lra
 \big(S,k,D,\{1,\rmi,-1\},A\big)
 \,.
 \]
 Then the Kodaira differentials at $0$ satisfy
 \[
 [T_0\mathfrak{j}]=
 \varphi^*\circ[T_0\mathfrak{k}]\circ\zeta
 \,,
 \]
 cf.\ \cite[Proposition 1.6]{hwz-dm12}.
 In particular, $\mathfrak{j}$ is effective if and only if $\mathfrak{k}$ is.
 
 Indeed, choose $\dot y\in E_j$ and write $\mathfrak{j}_t=\mathfrak{j}(t\dot y)$, 
 $\varphi_t=\varphi(t\dot y)$, and $\mathfrak{k}_t=\mathfrak{k}\big(\zeta(t\dot y)\big)$ for 
 $t\in(-1,1)$ and take the Lie derivative of $\mathfrak{j}_t=\varphi_t^*\mathfrak{k}_t$.
 By conformal invariance of the Cauchy--Riemann operator
 $\varphi^*\circ k\CR k=j\CR j\circ\varphi^*$ (see Remark \ref{rem:automphiandbdcommute})
 we obtain
 \[
 T_0\mathfrak{j}(\dot y)=
 2j\CR j(\varphi^*X)+\varphi^*\Big(T_0\mathfrak{k}\big(\zeta(\dot y)\big)\Big)
 \]
 similarly to Remark \ref{rem:incoisotropy}, where $X$ is the velocity vector of the path
 $t\mapsto\varphi_t$ in $\GG$ at $0$.
 As the restriction of $\varphi_t$ to the special points in $D\cup\{1,\rmi,-1\}\cup A$ is
 constant by continuity of $t\mapsto\varphi_t$ we obtain that $X\in\Omega^0$.
 Hence, $\varphi^*X\in\Omega^0$ and therefore $2j\CR j(\varphi^*X)$ represents the zero class.
 The claim follows now because $\varphi$ defines a well--defined isomorphism
 $\varphi^*\co H^1_k\ra H^1_j$ via $\varphi^*[y]:=[\varphi^*y]$,
 \[
 \varphi^*y:=T_{\varphi}\varphi^{-1}\circ y_{\varphi}\circ T\varphi
 \,,
 \]
 by the previous argument.
 Hence,
 \[
 [T_0\mathfrak{j}(\dot y)]=
 \varphi^*\Big[T_0\mathfrak{k}\big(\zeta(\dot y)\big)\Big]
 \]
 for all $\dot y\in E_j$.
 
 If both deformations $\mathfrak{j}$ and $\mathfrak{k}$ are effective,
 then there is a {\it tautological} choice for an isomorphism $\zeta$
 for the given diffeomorphism $\varphi$, namely
 $\zeta=[T_0\mathfrak{k}]^{-1}\circ(\varphi^*)^{-1}\circ[T_0\mathfrak{j}]$.
 On the other hand, in the situation of Example \ref{ex:l2othogonal}, where the local slices are
 constructed via orthogonal complements, we get $\varphi^*E_k=E_j$ because $g_j$ and
 $\varphi^*g_k$ are conformally equivalent.
 This allows the choice $(\varphi^*)^{-1}\co E_j\ra E_k$ for $\zeta$ and yields
 \[
 [T_0\mathfrak{j}]=
 \varphi^*\circ[T_0\mathfrak{k}]\circ(\varphi^*)^{-1}
 \,.
 \]
\end{rem}


\subsection{Orbifold structure -- fixed stable nodal type\label{subsec:orbifoldstrfixdstabtype}}

In the situation of Section \ref{subsec:kodairadiff} we assume for the moment that the
isotropy group $\GG_j$ is trivial.
Invertibility of the Kodaira differential $[T_y\mathfrak{j}]$ implies that the linear subspace
$T_y\mathfrak{j}(E_j)$ in $\Omega^{0,1}_{j(y)}$ is complementary to $\im\big(j(y)\CR {j(y)}\big)$.
With the arguments from \cite[p.~634/5]{rs06} and \cite[Theorem 4.2.14]{wen10}
we obtain that
\[
\GG\times V_j\lra\JJ
\,,\qquad
(\varphi,y)\longmapsto\varphi^*\big(j(y)\big)
\,,
\]
is a diffeomorphism onto a neighbourhood of the $\GG$-orbit of $j=\id^*\!\big(j(0)\big)$
for a sufficiently small open neighbourhood $V_j\subset E_j$ of $0$.
In other words, the map $(\id,y)\mapsto j(y)$ induces a homeomorphism
\[
(\id,y)\longmapsto\big[S,j(y),D,\{1,\rmi,-1\},A\big]=\big[j(y)\big]
\]
defined on $V_j$ onto a neighbourhood of $\big[S,j,D,\{1,\rmi,-1\},A\big]=[j]$ in
$\RR_{\tau}=\JJ/\GG$.
The inverse serves as a chart of a smooth manifold structure on $\RR_{\tau}$.

For $\GG_j$ non-trivial we give an equivariant version of the above construction:
Assuming the situation of Section \ref{subsec:kodairadiff} we consider a symmetric
effective deformation $\mathfrak{j}\co(V_j,0)\ra(\JJ,j)$, which we also denote by
$y\mapsto j(y)$.
We would like to find a $\GG$-equivariant diffeomorphism
\[
\GG\times_{\GG_j} V_j\lra\JJ
\,,\qquad
(\varphi,y)\longmapsto\varphi^*\big(j(y)\big)
\,,
\]
onto a neighbourhood of the $\GG$-orbit of $j=\id^*\!\big(j(0)\big)$, where
$\GG\times_{\GG_j} V_j$ is the quotient of $\GG\times V_j$ by the action
$(\varphi,y)\mapsto\big(\psi^{-1}\circ\varphi,\psi^*y\big)$, $\psi\in\GG_j$.

In order to do so we would like to find an isomorphism
\[
\varphi\co
\big(S,j(y),D,\{1,\rmi,-1\},A\big)
\lra
\big(S,k,D,\{1,\rmi,-1\},A\big)
\]
for given $k\in\JJ$ sufficiently close to $j\in\JJ$ and for some $y\in V_j$.
Holomorphicity of $\varphi$ translates to $\varphi^*k=j(y)$.
In other words, $(\varphi,y,k)$ is a zero of the {\bf non-linear Cauchy-Riemann operator}
\[
F(\varphi,y,k):=\frac12\Big(T\varphi+k\circ T\varphi\circ j(y)\Big)
\,,
\]
which is a section into the bundle $\EE\ra\GG\times V_j\times\JJ$, whose fibre
$\EE_{(\varphi,y,k)}$ is the vector space of sections of the bundle of complex anti-linear
bundle homomorphisms from $\big(TS,j(y)\big)$ to $(TS,k)$.

Setting $F_k:=F(\,.\,,\,.\,,k)$ we will write the solutions $(\varphi,y)$ of $F_k(\varphi,y)=0$ as
a function of $k$ via the implicit function theorem.
The linearisation
\[
T_{(\id,0)}F_j:\Omega^0\times E_j\lra\Omega^{0,1}_j
\]
of $(\varphi,y)\mapsto F_j(\varphi,y)$ at $(\id,0)$ equals
\[
(X,\dot y)\longmapsto
\CR jX+\tfrac12j\cdot\big(T_0\mathfrak{j}\big)(\dot y)
\,.
\]
With Sections \ref{subsec:fredindofinfiact} and \ref{subsec:kodairadiff}
the operator
\[
-2j\cdot T_{(\id,0)}F_j=
-2j\CR j\oplus T_0\mathfrak{j}
\]
is an isomorphism.

The {\bf implicit function theorem} combined with an intermediate Sobolev completion and a
subsequent elliptic regularity argument as in \cite[p.~634/5]{rs06} or in
\cite[Theorem 4.2.14]{wen10} implies:
There exists an open neighbourhood $\KK\subset\JJ$ of $j$, a possibly smaller
$\GG_j$-invariant open neighbourhood $V_j\subset E_j$ of $0$, an open neighbourhood
$\HH\subset\GG$ of $\id$ such that the sets $\psi^*\HH$ are pair-wise disjoint for all
$\psi\in\GG_j$, and a unique map
\[
\Phi\co
\big(\KK,j\big)
\lra
\Big(\HH\times V_j,(\id,0)\Big)
\]
such that
\[
F_k(\varphi,y)=0
\quad\Longleftrightarrow\quad
(\varphi,y)=\Phi(k)
\,,
\]
whenever $(\varphi,y,k)\in\HH\times V_j\times\KK$.
Notice, that uniqueness implies $\Phi\big(j(y)\big)=(\id,y)$
for all $y\in V_j$.

For all $\psi\in\GG_j$ define a map
\[
\Phi_{\psi}\co
\big(\KK,j\big)
\lra\Big(\psi^*\HH\times V_j, (\psi,0)\Big)
\]
setting
\[
\Phi_{\psi}(k):=
\psi^*\big(\Phi(k)\big)
\,.
\]
Observe that $\Phi_{\psi}\big(j(y)\big)=\big(\psi,\psi^*y\big)$
for all $y\in V_j$.
Moreover, by symmetry of the deformation $\mathfrak{j}$, which reads as
$\psi^*\circ\mathfrak{j}=\mathfrak{j}\circ\psi^*$ for all $\psi\in\GG_j$, we get
$\psi^*\circ F_k=F_k\circ\psi^*$, and hence
$F_k\big(\Phi_{\psi}(k)\big)=0$ for all $k\in\JJ$ and $\psi\in\GG_j$.
In other words, for all $\psi\in\GG_j$ there exists a unique map
$\Phi_{\psi}$ with the above properties such that
\[
F_k(\varphi,y)=0
\quad\Longleftrightarrow\quad
(\varphi,y)=\Phi_{\psi}(k)
\,,
\]
whenever $(\varphi,y,k)\in\psi^*\HH\times V_j\times\KK$.

We get the following {\bf global uniqueness statement}:
There exists potentially smaller neighbourhoods $V_j$ and
$\KK$ such that for all solutions $(\varphi,y,k)\in\GG\times V_j\times\KK$
of $F(\varphi,y,k)=0$ there exists a unique $\psi\in\GG_j$ such that
$\varphi\in\psi^*\HH$ and $(\varphi,y)=\Phi_{\psi}(k)$.
This follows arguing by contradiction with properness of the action
$\GG\times\JJ\ra\JJ$, $(\phi,j)\mapsto\phi^*j$, see Remark \ref{rem:topologyfixedstabletype}.

Based on the current results of the implicit function theorem an {\bf orbifold chart} about
$j\in\JJ$ can be obtained as follows:
Let $\UU_j$ be the image of $\KK$ under the projection
$[\,.\,]\co\JJ\ra\RR_{\tau}=\JJ/\GG$, i.e.\
\[\UU_j=[\KK]\,,\]
so that, in particular, $\UU_j$ is open according to the
quotient topology described in Remark \ref{rem:topologyfixedstabletype}.
With help of $\Phi$ we find $\varphi\in\HH$ and $y\in V_j$ for each given $k\in\KK$ such that
$\varphi^*k=j(y)$.
Hence, $[k]=\big[j(y)\big]$, so that $\UU_j$ is the set of all isomorphism classes
$\big[S,j(y),D,\{1,\rmi,-1\},A\big]=\big[j(y)\big]$ with $y\in V_j$.

The isotropy group $\GG_j$ acts linearly on $V_j$ by conjugation. In view of the metric
obtained by restriction of the metric described in Example \ref{ex:l2othogonal} this action is
orthogonal.
Hence, the action is {\bf effective}, i.e.\ only for $\id\in\GG_j$ all points of $V_j$ are fixed points.
The map
\[
\mathfrak{p}_j\co V_j/\GG_j\lra\UU_j
\,,\qquad
[y]\longmapsto\big[j(y)\big]
\]
is well-defined by symmetry of the deformation $\mathfrak{j}$; $\mathfrak{p}_j$ is continuous
because the deformation $\mathfrak{j}$ is and the respective quotient maps are open and
continuous as explained in Remark \ref{rem:topologyfixedstabletype}.

We claim that
\[
\UU_j\lra V_j/\GG_j
\,,\qquad
[k]\longmapsto\big[\Phi^2(k)\big]
\,,
\] 
is the inverse map of $\mathfrak{p}_j$, where $\Phi^2(k)$ denotes the second component of
$\Phi(k)$.
First of all the map is well-defined by the following {\bf compatibility condition for uniformisers}:
Write $\big[j(y_1)\big]=[k]=\big[j(y_2)\big]$ for $y_1,y_2\in V_j$ and choose an isomorphism
\[
\phi\co
\big(S,j(y_1),D,\{1,\rmi,-1\},A\big)
\lra
\big(S,j(y_2),D,\{1,\rmi,-1\},A\big)
\,,
\]
whose existence is guaranteed by the definition of the equivalence relation.
We claim that
\[
\phi\in\GG_j
\quad
\text{and}
\quad
y_1=\phi^*y_2
\,.
\]
Indeed, we get $F\big(\phi,y_1,j(y_2)\big)=0$, so that we find a unique $\psi\in\GG_j$ such that
$\phi\in\psi^*\HH$ and $(\phi,y_1)=\Phi_{\phi}\big(j(y_2)\big)$ by the above global uniqueness
statement.
Because of $\Phi_{\phi}\big(j(y_2)\big)=(\psi,\psi^*y_2)$ we obtain $\phi=\psi\in\GG_j$ and
$y_1=\psi^*y_2$.
Being well-defined follows now with the equation $\Phi\big(j(y)\big)=(\id,y)$ for $y\in\{y_1,y_2\}$.
Similarly, in order to verify the two-sided inverse property we obtain
\[
[y]\longmapsto
\big[j(y)\big]\longmapsto
\Big[\Phi^2\big(j(y)\big)\Big]=[y]
\]
and, writing $\varphi^*k=j(y)$ for $y\in V_j$,
\[
[k]\longmapsto
\big[\Phi^2(k)\big]=[y]\longmapsto
\big[j(y)\big]=[k]
\,.
\]
Therefore, the assignment $[k]\mapsto\big[\Phi^2(k)\big]$ is the inverse map
$\mathfrak{p}_j^{-1}\co\UU_j\lra V_j/\GG_j$.
The inverse $\mathfrak{p}_j^{-1}$ is continuous as well.
This follows because $k\mapsto\Phi^2(k)$ is is continuous and the involved quotient maps
are open and continuous, cf.\ Remark \ref{rem:topologyfixedstabletype}.
Consequently, the $\GG_j$-invariant map
\[
[\mathfrak{j}]\co V_j\lra\UU_j
\]
induces a homeomorphism $\mathfrak{p}_j$ form $V_j/\GG_j$ onto $\UU_j$.
In other words, $\big(V_j,\GG_j,\mathfrak{p}_j^{-1}\big)$ is an {\bf orbifold chart for}
$\RR_{\tau}$ {\bf about} $[j]=\big[S,j,D,\{1,\rmi,-1\},A\big]$ and
$\big(E_j,\GG_j,V_j,\UU_j,[\mathfrak{j}]\big)$ is a $\tau$-{\bf uniformiser}
by definition.

\begin{rem}
 \label{rem:anisoofosotropygroups}
 By the above implicit function theorem we find $k$-holomorphic maps
 $\varphi\in\GG$ defined on $\big(S,j(y)\big)$, where $k\in\KK$ and $y\in V_j$, so that
 $(\varphi,y)$ is a solution of $F_k=0$.
 By global uniqueness and a potential precomposition with the inverse of an element in $\GG_j$
 making use of the symmetry property of $\mathfrak{j}$ we can assume that $\varphi\in\HH$
 and write $\Phi(k)=(\varphi,y)$.
 By local uniqueness and $\GG_j$-invariance of the solution set we obtain that $\{F_k=0\}$
 is equal to the set of all $\big(\psi^*\varphi,\psi^*y\big)$, $\psi\in\GG_j$.
 Hence, for $y=\Phi^2(k)$ the solution set $\big\{F_k(\,.\,,y)=0\big\}$ is given by
 $\{\psi^*\varphi\,|\,\psi\in\GG_{j,y}\}$, where $\GG_{j,y}$ denotes the {\bf stabiliser at} $y\in V_j$
 of the induced action by $\GG_j$ on $E_j$.
 For that we will also write 
 \[
 \GG_{j,y}:=\big(\GG_j|_{E_j}\big)_y
 \]
 Notice that $\GG_{j,0}=\GG_j$.
 
 For all $\hat{\psi}\in\GG_k$ we have that
 $F_k(\hat{\psi}\circ\varphi,y)=T\hat{\psi}\circ F_k(\varphi,y)$, so that the set
 $\big\{\hat{\psi}\circ\varphi\,|\,\hat{\psi}\in\GG_k\big\}$ is contained in
 $\{\psi^*\varphi\,|\,\psi\in\GG_{j,y}\}$.
 For the converse observe that $\varphi\circ\psi\circ\varphi^{-1}$, $\psi\in\GG_{j,y}$, is
 an element of $\GG_k$ and $\big(\varphi\circ\psi\circ\varphi^{-1}\big)\circ\varphi=\psi^*\varphi$.
 Therefore, we obtain two descriptions
 \[
 \Big\{\hat{\psi}\circ\varphi\,\,\big|\,\,\hat{\psi}\in\GG_k\Big\}=
 \big\{\psi^*\varphi\,\,|\,\,\psi\in\GG_{j,y}\big\}
 \]
 for the solution set $\big\{F_k(\,.\,,y)=0\big\}$ and
 \[
\GG_{j,y}\lra\GG_k
 \,,\qquad
 \psi\longmapsto\varphi\circ\psi\circ\varphi^{-1}
 \]
 is an {\bf isomorphism of isotropy groups} with inverse
 $\hat{\psi}\mapsto\varphi^{-1}\circ\hat{\psi}\circ\varphi$.
\end{rem}

In order to describe the {\bf transformation behaviour of orbifold charts} we
consider symmetric effective deformations $\mathfrak{j}\co V_j\ni y\mapsto j(y)$ and
$\mathfrak{k}\co V_k\ni z\mapsto k(z)$ of $\big(S,j,D,\{1,\rmi,-1\},A\big)$ and
$\big(S,k,D,\{1,\rmi,-1\},A\big)$, resp.
About the respective $\tau$-uniformiser $\big(E_j,\GG_j,V_j,\UU_j,[\mathfrak{j}]\big)$ and
$\big(E_k,\GG_k,V_k,\UU_k,[\mathfrak{k}]\big)$ we assume that $\UU_j\cap\UU_k=\emptyset$.
Hence, we find $\varphi\in\GG$ such that $\varphi^*k=j$.

We claim that we can assume that $j=j(0)=k(0)=k$.
Indeed, define a deformation $\mathfrak{k'}\co V_{k'}\ni z\mapsto k'(z)$,
\[
k'(z)=\varphi^*\Big(k\big((\varphi^{-1})^*z\big)\Big)
\,,
\]
of $\big(S,j,D,\{1,\rmi,-1\},A\big)$, whose domain is the subset $V_{k'}:=\varphi^*V_k$ of
the complementary space $E_{k'}:=\varphi^*E_k$.
For the latter use that $g_{k'}$ and $\varphi^*g_k$ are conformally equivalent.
By the naturality of the Kodaira differential we have
\[
[T_0\mathfrak{k'}]=
\varphi^*\circ[T_0\mathfrak{k}]\circ(\varphi^{-1})^*
\,,
\]
so that $\mathfrak{k'}$ is effective, see Remark \ref{rem:naturalityofthekd}.
In view of Remark \ref{rem:anisoofosotropygroups} the deformation $\mathfrak{k'}$ is
symmetric because for all $\psi\in\GG_k$, for which we have
$\psi^*\circ\mathfrak{k}=\mathfrak{k}\circ\psi^*$, it follows that
$\big(\varphi^{-1}\circ\psi\circ\varphi\big)^*\circ\mathfrak{k'}=
\mathfrak{k'}\circ\big(\varphi^{-1}\circ\psi\circ\varphi\big)^*$.

Therefore, we consider deformations $\mathfrak{j}$ and $\mathfrak{k}$ such that
$j=j(0)=k(0)=k$.
In view of the implicit function theorem above there exists a $k(z)$-holomorphic map
$\varphi(z)\in\GG$ close to $\id\in\GG_j$ defined on $\big(S,j\big(y(z)\big)\big)$ via the locally
unique smooth map $\Phi\big(k(z)\big)=\big(\varphi(z),y(z)\big)$ such that $\Phi(j)=(\id,0)$.
Switching the roles results into a $j(y)$-holomorphic map $\hat{\varphi}(y)\in\GG$ close to
$\id\in\GG_j$ defined on $\big(S,k\big(z(y)\big)\big)$ via the locally unique smooth map
$\Phi\big(j(y)\big)=\big(\hat{\varphi}(y),z(y)\big)$ such that $\Phi(j)=(\id,0)$.
Comparing both solutions using uniqueness yields
\[
\hat{\varphi}\big(y(z)\big)=\big(\varphi(z)\big)^{-1}
\]
as well as $z=z\big(y(z)\big)$ and $y=y\big(z(y)\big)$.
Therefore, after shrinking the domains according to the implicit function theorem if necessary,
which results into $\UU_j=\UU_k$, we obtain maps
\[
V_j\lra V_k
\,,\quad
y\longmapsto z(y)
\qquad\text{and}\qquad
V_k\lra V_j
\,,\quad
z\longmapsto y(z)
\,,
\]
which are smooth and inverse to each other such that
$[\mathfrak{k}]\circ\big(y\mapsto z(y)\big)=\hat{\varphi}(y)^*[\mathfrak{j}]$.
This results into a coordinate change of an orbifold structure because the construction is done
in a $\GG_j$-equivariant fashion:
For that use Remark \ref{rem:anisoofosotropygroups} and observe that the solution set
$\big\{F_{k(z)}(\,.\,,y)=0\big\}$ is given by 
\[
\Big\{\hat{\psi}\circ\varphi(z)\,\,\big|\,\,\hat{\psi}\in\GG_{k(z)}\Big\}=
\big\{\psi^*\varphi(z)\,\,|\,\,\psi\in\GG_{j,y}\big\}
\,.
\]

As for manifolds we obtain:

\begin{prop}
 \label{prop:rtauisanorbifold}
 The above constructed orbifold charts provide the nodal Riemann moduli space $\RR_{\tau}$
 for the stable nodal type $\tau$ with the structure of an orbifold of real dimension
 $2\big(\#A-\#D\big)$, whose isotropy groups at $[j]\in\RR_{\tau}$ are given by $\GG_j$ up to
 conjugation.
\end{prop}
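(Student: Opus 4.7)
The strategy is to assemble the pieces already constructed in Sections \ref{subsec:infinitesimalaction}--\ref{subsec:orbifoldstrfixdstabtype} into a verification of the orbifold axioms, with essentially no new analytic input. First I would record the single-chart data: for each $[j] \in \RR_\tau$ a symmetric effective deformation $\mathfrak{j}\co (V_j, 0)\ra(\JJ, j)$ exists by Example \ref{ex:symmetricandeffective}; the isotropy group $\GG_j$ is finite by Remark \ref{rem:isotropyisfinite}; the induced action of $\GG_j$ on $V_j \subset E_j$ is orthogonal, and in particular effective, with respect to the $L^2$-inner product of Example \ref{ex:l2othogonal}; and the implicit-function-theorem argument of Section \ref{subsec:orbifoldstrfixdstabtype} identifies $V_j/\GG_j$ homeomorphically with an open neighbourhood $\UU_j$ of $[j]$ via $\mathfrak{p}_j$. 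The dimension of $V_j$ equals $\dim E_j = 2(\#A - \#D)$, which is the codimension of $\im(j\CR j)$ computed in Section \ref{subsec:fredindofinfiact}.

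Next I would verify the compatibility of any two orbifold charts with overlapping range. Given $[\ell]$ in $\UU_j \cap \UU_k$, one constructs a symmetric effective deformation $\mathfrak{l}\co (V_\ell, 0)\ra(\JJ, \ell)$ with range shrunk into the intersection. The reduction carried out in Section \ref{subsec:orbifoldstrfixdstabtype}, which transports an ambient deformation at $k$ to a deformation at $\varphi^*k = \ell$ via pullback (using that $g_\ell$ and $\varphi^* g_k$ are conformally equivalent, so $\varphi^* E_k = E_\ell$), together with the implicit function theorem, yields smooth $\GG_\ell$-equivariant open embeddings $V_\ell \ra V_j$ and $V_\ell \ra V_k$ which intertwine the projections to $\UU_\ell$. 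Naturality of the Kodaira differential (Remark \ref{rem:naturalityofthekd}) ensures that these are local diffeomorphisms onto open subsets of $V_j$ and $V_k$, and the isotropy isomorphism of Remark \ref{rem:anisoofosotropygroups} provides the required equivariance between $\GG_\ell$ and the stabiliser subgroups of $\GG_j$ and $\GG_k$ that fix the relevant base points.

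The cocycle condition for a triple of mutually overlapping charts reduces, via the global uniqueness clause of the implicit function theorem in Section \ref{subsec:orbifoldstrfixdstabtype}, to the observation that any two solutions of $F_k(\,\cdot\,, y) = 0$ in the prescribed neighbourhood differ by the action of an element of $\GG_j$. Combined with the symmetry equivariance $\psi^*\circ F_k = F_k\circ\psi^*$, this forces the threefold composition of transition embeddings to coincide with the direct transition embedding, up to the $\GG$-action. The underlying topological space is Hausdorff and paracompact by Remark \ref{rem:topologyfixedstabletype}, so all orbifold axioms are met. The identification of the isotropy at $[j]$ with the conjugacy class of $\GG_j$ is then exactly the content of Remark \ref{rem:anisoofosotropygroups}. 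I expect the main obstacle to be bookkeeping rather than analysis: ensuring that the local uniqueness and equivariance clauses of the implicit function theorem interact correctly under composition of three uniformisers, so that the transition data descend to a well-defined orbifold atlas rather than merely a pre-atlas.
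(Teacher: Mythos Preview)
Your proposal is correct and follows essentially the same route as the paper: the proposition is stated immediately after the constructions of Section~\ref{subsec:orbifoldstrfixdstabtype} with the phrase ``As for manifolds we obtain,'' so its proof is precisely the assembly of those pieces that you outline. The only cosmetic difference is in how chart compatibility is phrased: the paper takes two uniformisers with overlapping range, transports one to the base point of the other via $\varphi^*$ so as to reduce to $j=k$, and then produces the transition map $y\mapsto z(y)$ directly from the implicit function theorem; you instead introduce a third chart at a point $[\ell]$ of the overlap and embed it equivariantly into both. These are interchangeable formulations of the same argument, and your explicit mention of the cocycle condition for triple overlaps (which the paper leaves implicit) is a reasonable addition.
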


Referring to the current situation we define
\[
\mathbf{T}_{j,k}:=
\Big\{
 (\varphi,y,z)
 \,\,\big|\,\,
 F_{k(z)}(\varphi,y)=0
\Big\}
\subset\GG\times V_j\times V_k
\]
provided with the subspace topology and call the projection $s\co\mathbf{T}_{j,k}\ra V_j$ onto $V_j$ the {\bf source map}; the the projection $t\co\mathbf{T}_{j,k}\ra V_k$ onto $V_k$
the {\bf target map}.
These maps come with inverses $y\mapsto\big(\hat{\varphi}^{-1}(y),y,z(y)\big)$ and
$z\mapsto\big(\varphi(z),y(z),z\big)$, resp., where
$\hat{\varphi}\big(y(z)\big)=\big(\varphi(z)\big)^{-1}$
as above.
Hence, $s$ and $t$ are homeomorphisms
providing $\mathbf{T}_{j,k}$ with the structure of a smooth manifold of dimension
$2\big(\#A-\#D\big)$, and $t\circ s^{-1}$ and $s\circ t^{-1}$ correspond to the above transition
maps $y\mapsto z(y)$ and $z\mapsto y(z)$, resp.
By the properness argument in Remark \ref{rem:topologyfixedstabletype} the map
$s\times t\co\mathbf{T}_{j,k}\ra V_j\times V_k$ is proper.

In other words,
we obtain an {\bf \'etale proper Lie groupoid}
$(R_{\tau},\mathbf{R}_{\tau})$,
which means the following:
Take a sequence of $\tau$-uniformisers
$\big(E_{j_{\nu}},\GG_{j_{\nu}},V_{j_{\nu}},\UU_{j_{\nu}},[\mathfrak{j}_{\nu}]\big)$ such that
$\bigcup_{\nu}\UU_{j_{\nu}}$ covers $\RR_{\tau}$.
The {\bf objects} are given by
\[
R_{\tau}:=\bigsqcup_{\nu}V_{j_{\nu}}
\]
and the {\bf morphisms} are
\[
\mathbf{R}_{\tau}:=\bigsqcup_{\nu,\mu}\mathbf{T}_{j_{\nu},j_{\mu}}\,.
\]
Morphisms can be composed whenever the corresponding target and source coincide.
The resulting composition is smooth, has a unit and each morphism admits a smooth inverse.
Furthermore all mentioned structure maps are smooth.
The nodal Riemann moduli space $\RR_{\tau}$ for the stable nodal type $\tau$ appears now
as {\bf orbit space}
\[\RR_{\tau}=R_{\tau}/\!\sim\,,\]
where two objects between which there exists a morphism are considered to be equivalent.


\subsection{Skyscraper deformation\label{subsec:skyscraperdeformation}}

A symmetric effective deformation $\mathfrak{j}$ is called a {\bf skyscraper deformation}
if there exists a $\GG_j$-invariant neighbourhood $U\subset S$ of $\partial S$ together with
the special points $|D|\cup\{1,\rmi,-1\}\cup A$ on which the {\bf deformation is stationary},
i.e.\ if $j(y)=j$ restricted to $U$ for all $y\in V_j$.
In view of the examples of symmetric effective deformations in Section
\ref{subsec:cayleytransform} and Remark \ref{rem:viahomospace} skyscraper deformations
can be obtained by restriction of symmetric effective deformations to a complementary
$\GG_j$-invariant vector space $E_j$ of $\im(j\CR j)$ whose elements vanish on $U$.
 
In order to construct such a vector space $E_j$ we denote by $\XX$ the space of all
smooth vector fields on $S$ that are tangent to the boundary along $\partial S$ and
admit $3$ zeros on each connected component of $S$ equal to special points in
$|D|\cup\{1,\rmi,-1\}\cup A$; on the disc component the zeros are required to be
$1,\rmi,-1$.
The operator $j\CR j$ restricted to $\XX$ induces an isomorphism $\XX\ra\Omega^{0,1}_j$.
 
We begin with the un-isotropic situation $\GG_j=\{\id\}$.
In order to construct a complement of $\Omega^0$ in $\XX$ we write $z_k$ for the elements
of $|D|\cup A$ and choose local holomorphic charts $(\C,\rmi)$ for $(S,j)$ about the special
points $z_k$.
We require that the chart domains are mutually disjoint and contained in
$S\setminus\partial S$.
Let $f_k$ be smooth cut off functions on $S$ that have their supports in the interior
of $r$-disc neighbourhoods about $z_k$ w.r.t.\ $g_j$ contained in the chosen
chart domains; the $f_k$ are required to by constantly $1$ on the $r/2$-disc
neighbourhoods about $z_k$.
Given $X\in\XX$ we define vector fields $X_k$ on $S$.
We require that the $X_k$ are given by $X(z_k)f_k$ in the chosen charts extended by zero
to $S$.
Observe that the $X_k$ vanish for special points that correspond to the zeros defining $\XX$.
Moreover, the $X_k$ are holomorphic on the $r/2$-discs.

Let $P\co\XX\ra\XX$ be the projector, i.e.\ $P^2=P$, given by
 \[
 P(X):=X-\sum_kX_k
 \,.
 \]
Observe that $P$ restricts to the identity on $P(\XX)=\Omega^0$.
The desired complement of $\Omega^0$ in $\XX$ is $(1-P)(\XX)$ as $1-P$ is a projector
as well.
The elements of $j\CR j(1-P)(\XX)$ vanish on a neighbourhood of all special points
$|D|\cup\{1,\rmi,-1\}\cup A$ and of $\partial S$.
Moreover, the dimension of $(1-P)(\XX)$ equals
\[
2\big(\#A-\#D\big)
\]
by the result of the computation of Section \ref{subsec:fredindofinfiact}
multiplied by $-1$.

Finally we set $E_j:=j\CR j(1-P)(\XX)$.
The elements of $E_j$ vanish on a neighbourhood of the union of the special points
$|D|\cup\{1,\rmi,-1\}\cup A$ and of the boundary $\partial S$.
Furthermore the isomorphism $j\CR j\co\XX\ra\Omega^{0,1}_j$ sends the splitting
$(1-P)(\XX)\oplus\Omega^0$ of $\XX$ to the splitting $E_j\oplus\im(j\CR j)$ of
$\Omega^{0,1}_j$.

We treat the case of a non-vanishing isotropy group $\GG_j$, which acts by permutations
on $\{z_k\}=|D|\cup A$.
It suffices to change the above projector $P$ by replacing the vector fields $X_k$ by
$\GG_j$-invariant vector fields $\hat{X}_k$.
For that denote by $B_r(z_k)$, $r>0$, the interior of the above $r$-discs.
Observe that the disjoint union of the $B_r(z_k)$ is $\GG_j$-invariant as $\GG_j$ acts by
isometries on $(S,g_j)$.

For each $z_k$ we assign a {\bf partner point} $w_k\in B_{r/2}(z_k)\setminus\{z_k\}$
requiring that the $\psi(w_k)$ are pair-wise distinct for all $\psi\in\GG_j$ and all $k$.
Notice, that the distance between $z_k$ and its partner $w_k$ is $\GG_j$-invariant for
all $k$.
We choose $\varepsilon>0$ such that
$B_{\varepsilon}(w_k)\subset B_{r/2}(z_k)\setminus\{z_k\}$ for all $k$.
Furthermore, we reqiure that the $\psi\big(B_{\varepsilon}(w_k)\big)$ are pair-wise
disjoint for all $\psi\in\GG_j$ and for all $k$.
Modify the cut off functions $f_k$ so that $f_k$ has support in
\[
B_r(z_k)\,\,\setminus\quad
\bigsqcup_{\ell\neq k\,\,\text{and}\,\,\psi\in\GG_j}
\psi\big(\overline{B_{\varepsilon/2}(w_{\ell})}\big)
\]
and is equal to $1$ on
\[
B_{r/2}(z_k)\,\,\setminus\quad
\bigsqcup_{\ell\neq k\,\,\text{and}\,\,\psi\in\GG_j}
\psi\big(\overline{B_{\varepsilon}(w_{\ell})}\big)
\,.
\]
In particular, for all $k$, we get $f_k(w_k)=1$ and $f_k\big(\psi(w_{\ell})\big)=0$ for all
$\ell\neq k$ and $\psi\in\GG_j$.
With the cut off functions $f_k$ modified we define $X_k$ for given $X\in\XX$ as in the
un-isotropic case.

We define the symmetrisations via
\[
\hat{X}_k:=\sum_{\psi\in\GG_j}\psi^*X_k
\,.
\]
We have $\phi^*\hat{X}_k=\hat{X}_k$ for all $\phi\in\GG_j$ and for all $k$ because $\GG_j$
acts on itself via composition permuting $\GG_j$.
The $\hat{X}_k$ that are assigned to the zeros $z_k$ of $\XX$ vanish; the remaining
$\hat{X}_k$ span a $2\big(\#A-\#D\big)$-dimensional vector space because
\[
\hat{X}_k(w_{\ell})=X_{\ell}(w_{\ell})
\]
for all $k,\ell$.
A basis can be obtained by taking $X\in\XX$ with $X(z_k)$ non-zero, so that the corresponding
partners $X_k(w_k)$ do not vanish.

\begin{rem}
\label{rem:complexstructure}
 Observe that the elements of $(1-P)(\XX)$,
 which are linear combinations of the vector fields $\hat{X}_k$
 constructed above, are vector fields on $S$
 that vanish on the boundary $\partial S$.
 Therefore, the complex structure $j$ on $S$
 preserves $(1-P)(\XX)$ and defines a {\bf complex structure} on
 $E_j=\CR j(1-P)(\XX)$ as $\CR j$ commutes with $j$,
 so that $E_j$ is a complex vector space of complex dimension $\#A-\#D$.
 Consequently taking the complex deformations
 form Example \ref{ex:symmetricandeffective} w.r.t.\ $E_j$
 yields holomorphic skyscraper deformations.
\end{rem}

\begin{rem}
\label{rem:smalldiscstrandbasisoftop}
 In the above construction the radii $r_k$ of the discs
 \[
 D_{r_k/2}(z_k):=\overline{B_{r_k/2}(z_k)}
 \]
 are necessarily constant on the orbits of the $\GG_j$-action on the points $z_k\in |D|\cup A$
 because $\GG_j$ acts on the discs $D_{r_k/2}(z_k)$ by isometries of $(S,g_j)$; but the radii
 are allowed to vary on distinct orbits $\GG_jz_k$.
 For a selection of orbit-wise constant radii $r_k$ denoted by $\mathbf{r}$ and the disjoint union
 \[
 \mathbf{D}_{j,\mathbf{r}}:=\bigsqcup_{z_k\in |D|\cup A}D_{r_k/2}(z_k)
 \]
 a skyscraper deformation $\mathfrak{j}$ that is stationary on $\mathbf{D}_{j,\mathbf{r}}$ can be
 constructed by the above arguments.
 Given a neighbourhood $U$ of $|D|\cup A$ one can choose $\mathbf{r}$ so small such that
 $\mathbf{D}_{j,\mathbf{r}}\subset U$.
 This yields an example of a {\bf small disc structure} $\mathbf{D}_j$, which by definition is a
 $\GG_j$-invariant disjoint union of discs $D_z$, $z\in|D|\cup A$, contained in a given
 neighbourhood of $|D|\cup A$ such that $z\in D_z$ for all $z\in|D|\cup A$.
 Furthermore the $D_z$ are the image of a smooth embedding of the closed unit disc $\D$ into
 $S$.
 Observe that a $k$-holomorphic map $\varphi\in\GG$ defined on $(S,j)$ sends
 $\mathbf{D}_{j,\mathbf{r}}$ diffeomorphically onto a small disc structure $\mathbf{D}_k$ on
 which the $\varphi$-push-forward skyscraper deformation $\mathfrak{k}$ of $\mathfrak{j}$ is
 stationary.
 
 Using small disc structures $\mathbf{D}_j$ orbifold charts
 $\big(V_j,\GG_j,\mathfrak{p}_j^{-1}\big)$ and $\tau$-uniformiser
 $\big(E_j,\GG_j,V_j,\UU_j,[\mathfrak{j}]\big)$ for $\RR_{\tau}$ about
 $[j]=\big[S,j,D,\{1,\rmi,-1\},A\big]$ can be constructed as in Section
 \ref{subsec:orbifoldstrfixdstabtype} using skyscraper deformations
 $\mathfrak{j}$ that are stationary on $\mathbf{D}_j$ exclusively.
 The transformation behaviour encoded in the $\mathbf{T}_{j,k}$ is compatible with skyscraper
 deformations which are stationary on disc structures.
 Correspondingly, a neighbourhood base of the topology on $\RR_{\tau}=\JJ/\GG$ described
 in Remark \ref{rem:topologyfixedstabletype} can be given by the family of subsets
 whose elements $[k]$ can be represented by complex structures $k$ that belong to an open
 subset of $\JJ$ and that satisfy $k=j$ restricted to some small disc structure
 $\mathbf{D}_j$.
 This follows with the implicit function theorem formulated in Section
 \ref{subsec:orbifoldstrfixdstabtype}.
\end{rem}

\begin{rem}
\label{rem:complexsubatlas}
 In view of the proceeding Remarks \ref {rem:complexstructure} and
 \ref {rem:smalldiscstrandbasisoftop}
 the orbifold structure on $\RR_{\tau}$ ensured in Proposition \ref{prop:rtauisanorbifold}
 and the subsequently described \'etale proper Lie groupoid structure
 admit subatlases generated by complex skyscraper deformations, so that
 the respective substructures are complex.
 For that one needs to verify that the transition maps
 $t\circ s^{-1}$ and $s\circ t^{-1}$ are holomorphic.
 In terms of complex skyscraper deformations
 $\mathfrak{j}\co V_j\ni y\mapsto j(y)$ and
 $\mathfrak{k}\co V_k\ni z\mapsto k(z)$
 of $\big(S,j,D,\{1,\rmi,-1\},A\big)$ and
 $\big(S,k,D,\{1,\rmi,-1\},A\big)$, resp.,
 such that there exists $\varphi\in\GG$ with $\varphi^*k=j$
 the transition maps are given by $y\mapsto z(y)$ and $z\mapsto y(z)$, resp.
 With the description before Proposition \ref{prop:rtauisanorbifold} we get
 $[\mathfrak{k}]\circ\big(y\mapsto z(y)\big)=\hat{\varphi}(y)^*[\mathfrak{j}]$ and 
 $[\mathfrak{j}]\circ\big(z\mapsto y(z)\big)=\varphi(z)^*[\mathfrak{k}]$ for smooth maps
 $V_k\ra\GG$, $z\mapsto\varphi(z)$, and $V_j\ra\GG$, $y\mapsto\hat{\varphi}(y)$, with
 $\varphi(0)=\varphi$ and $\hat{\varphi}(0)=\varphi^{-1}$.
 By symmetry it will be sufficient to verify holomorphicity in the first situation:
 Taking the derivative w.r.t.\ $y\in V_j$ in $k\big(z(y)\big)=\hat{\varphi}(y)^*j(y)$
 we obtain
 \[
 T_{z(y)}\mathfrak{k}\circ T_yz(\dot y)=
 2 k\big(z(y)\big)\cdot
 \CR {k(z(y))}\Big(\hat{\varphi}(y)^*\big(T_y \hat{\varphi}(\dot y)\big)\Big)+
 \hat{\varphi}(y)^*\big(T_y\mathfrak{j}(\dot y)\big)
 \]
 as in Remark \ref{rem:naturalityofthekd}.
 Replacing $\dot y$ by $j\dot y$ and composing with $-k\big(z(y)\big)$ from the left
 yields
 \[
 -T_{z(y)}\mathfrak{k}\circ k\circ T_yz(j\dot y)=
 2 \cdot
 \CR {k(z(y))}\Big(\hat{\varphi}(y)^*\big(T_y \hat{\varphi}(j\dot y)\big)\Big)+
 \hat{\varphi}(y)^*\big(T_y\mathfrak{j}(\dot y)\big)
 \,.
 \]
 The second summand on the right stays the same
 because $k\big(z(y)\big)=\hat{\varphi}(y)^*j(y)$
 and $\mathfrak{j}(y)\circ T_y\mathfrak{j}=T_y\mathfrak{j}\circ j$
 by complexitiy, see Remark \ref{rem:complexstructure}.
 Similarly,
 to deal with the left hand side use
 $\mathfrak{k}(z)\circ T_z\mathfrak{k}=T_z\mathfrak{k}\circ k$.
 On the right hand side, the first summend is an element in
 \[
 \im
 \Big(
 k\big(z(y)\big)\cdot
 \CR {k(z(y))}
 \Big)
 \]
 because $j\dot y$ vanishes along the boundary $\partial S$,
 so that the vector field $\hat{\varphi}(y)^*\big(T_y \hat{\varphi}(j\dot y)\big)$
 vanishes along $\partial S$ as well,
 and because on boundary vanishing vector fields on $(S,\partial S)$
 the Cauchy--Riemann operator is complex linear,
 cf.\ Section \ref{subsec:infinitesimalaction}.
 Effectivity of $\mathfrak{k}$ yields the algebraic splitting
 \[
 \Omega^{0,1}_{k(z(y))}=
 \,T_{z(y)}\mathfrak{k}\,(E_k)
 \,\oplus\,
 \im
 \Big(
 k\big(z(y)\big)\cdot
 \CR {k(z(y))}
 \Big)
 \,.
 \]
 Modding out the contributions to the second summand the above two equations compare to
 \[
 T_{z(y)}\mathfrak{k}\circ T_yz(\dot y)=
 -T_{z(y)}\mathfrak{k}\circ k\circ T_yz(j\dot y)
 \,.
 \]
 As $T_{z(y)}\mathfrak{k}$ is injective this yields
 \[
 T_yz(\dot y)=-k\circ T_yz(j\dot y)
 \,,
 \]
 i.e.\ $k\circ T_yz= T_yz\circ j$ meaning that $y\mapsto z(y)$ is holomorphic.
\end{rem}

Consequently, we obtain a complex version of Proposition \ref{prop:rtauisanorbifold}, so
that, in particular, $\RR_{\tau}$ is orientable.

\begin{prop}
 \label{prop:rtauisacomplexorbifold}
 The nodal Riemann moduli space $\RR_{\tau}$
 admits the structure of a complex orbifold
 of complex dimension $\#A-\#D$.
\end{prop}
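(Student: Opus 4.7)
The plan is to upgrade the real orbifold structure from Proposition \ref{prop:rtauisanorbifold} to a complex one by restricting to a sub-atlas indexed by skyscraper deformations. Concretely, for each equivalence class $[j]\in\RR_\tau$ I would fix a small disc structure $\mathbf{D}_j$ as in Remark \ref{rem:smalldiscstrandbasisoftop} and realise the complementary space $E_j\subset\Omega^{0,1}_j$ by the construction of Section \ref{subsec:skyscraperdeformation} using symmetrised cut-off vector fields $\hat{X}_k$ supported away from $\partial S$ and away from the special points. By Remark \ref{rem:complexstructure} this $E_j$ is a complex vector space of complex dimension $\#A-\#D$, on which $\GG_j$ acts $\C$-linearly (the $\GG_j$-action is by pull-back, which is $j$-complex linear on endomorphism fields and preserves $E_j$ since $\GG_j$ acts by isometries of $g_j$). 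The associated skyscraper deformation $\mathfrak{j}$ is holomorphic, so the orbifold chart $(V_j,\GG_j,\mathfrak{p}_j^{-1})$ becomes a holomorphic chart once $V_j$ is opened as a $\GG_j$-invariant open neighbourhood of $0\in E_j$.

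Second, I would check that these skyscraper $\tau$-uniformisers actually cover $\RR_\tau$ and form an orbifold sub-atlas. Covering follows from Remark \ref{rem:smalldiscstrandbasisoftop}, which states that the sets $\UU_j$ arising from skyscraper deformations form a neighbourhood base for the quotient topology on $\RR_\tau$. The compatibility of the associated source/target pairs $\mathbf{T}_{j,k}$ with skyscraper choices, noted at the end of Remark \ref{rem:smalldiscstrandbasisoftop}, shows that the corresponding étale proper Lie groupoid sub-structure is well defined.

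Third, and this is the only real computation, I would invoke Remark \ref{rem:complexsubatlas} to conclude that the transition maps $y\mapsto z(y)$ and $z\mapsto y(z)$ between any two skyscraper charts are holomorphic. The essential mechanism there is that the naturality formula $[T_0\mathfrak{j}]=\varphi^*\circ[T_0\mathfrak{k}]\circ\zeta$ from Remark \ref{rem:naturalityofthekd}, combined with the fact that any infinitesimal direction of the form $\hat{\varphi}(y)^*(T_y\hat{\varphi}(j\dot y))$ lies in $\im\bigl(k(z(y))\CR{k(z(y))}\bigr)$ (because $j\dot y$, being an element of $E_j$, vanishes near $\partial S$, and $\CR j$ is complex linear on boundary-vanishing vector fields), forces $k\circ T_yz=T_yz\circ j$ after projecting to the complementary summand.

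Combining these three steps gives a complex orbifold atlas of complex dimension $\#A-\#D$, and the isotropy groups $\GG_j$ act biholomorphically on the charts. The main obstacle is already handled in Remark \ref{rem:complexsubatlas}; the rest is bookkeeping to see that the skyscraper sub-atlas is in fact an atlas (covering plus groupoid compatibility), together with the observation that the complex structure on $E_j$ coming from $y\mapsto jy$ is $\GG_j$-invariant. Orientability of $\RR_\tau$ then follows automatically from the existence of the underlying complex structure.
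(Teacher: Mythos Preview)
Your proposal is correct and follows exactly the paper's own route: the proposition is deduced from Remarks \ref{rem:complexstructure}, \ref{rem:smalldiscstrandbasisoftop}, and \ref{rem:complexsubatlas}, which respectively supply the complex structure on $E_j$, the covering by skyscraper uniformisers, and the holomorphicity of the transitions. One small slip in your summary of the third step: it is not the vector field $\hat{\varphi}(y)^*(T_y\hat{\varphi}(j\dot y))$ itself that lies in $\im\bigl(k(z(y))\CR{k(z(y))}\bigr)$, but rather $\CR{k(z(y))}$ applied to it (using that on boundary-vanishing vector fields the Cauchy--Riemann operator is complex linear, so $\CR{k}X=k\CR{k}(-kX)$); this is exactly how the paper phrases it in Remark \ref{rem:complexsubatlas}.
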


\begin{rem}
 \label{rem:fixinfcombdataandunifgivesthatalso}
 In order to derive Proposition \ref{prop:rtauisacomplexorbifold} we fixed in Section
 \ref{subsec:groupoidasorbitspace} the combinatorial data $\big(S,D,\{m_0,m_1,m_2\},A\big)$
 to represent stable nodal marked discs $\big[S,j,D,\{m_0,m_1,m_2\},A\big]$ and
 used deformations of $j$.
 A direct way to obtain a complex orbifold structure would be to change the roles.
 Apply uniformisation as in Remark \ref{rem:isotropyisfinite} in order to represent the classes
 $\big[S,j,D,\{m_0,m_1,m_2\},A\big]$ by nodal discs whose disc component equals
 $\big(\D,\rmi,\{1,\rmi,-1\}\big)$ and whose sphere components are given by $(\C P^1,\rmi)$.
 The complex orbifold structure can be read off from variations of the configurations of the
 nodal points $D$ and the marked points $A$ as such, resp.
\end{rem}


\subsection{Varying the stable nodal type via desingularisation\label{subsec:varstabnodtypeviadesing}}

For a complex number $a$ of modulus $|a|\leq1$ we consider the intersection of the planar
algebraic curve $\{zw=a\}\subset\C\times\C$ with the polydisc $\D\times\D$.
For $a=0$ this curve is the union of the discs $\{z=0\}=\{0\}\times\D$ and
$\{w=0\}=\D\times\{0\}$ that intersect in the singularity of the curve.
For $a\neq0$ the equation $zw=a$ can be solved by $w=a/z$ so that we obtain a cylinder
that has no singularities:
The restriction of the projection $(z,w)\mapsto z$ to the curve $\{zw=a\}$ yields a
biholomorphism onto the annulus $\{|a|\leq|z|\leq1\}$ in the first coordinate plane.
Interchanging $z$ and $w$ yields a biholomorphism onto $\{|a|\leq|w|\leq1\}$.
Both biholomorphisms constitute holomorphic charts of $\{zw=a\}$.
The transition map from the first annulus to the second is
\[z\longmapsto\frac{a}{z}\,.\]
Taking {\bf positive} and {\bf negative holomorphic polar coordinates} $(z,w)\mapsto -\ln z$
and $(z,w)\mapsto \ln w$, resp., i.e.\ writing
\[
z=\rme^{-(s+\rmi t)}
\qquad\text{and}\qquad
w=\rme^{u+\rmi v}
\,,
\]
the transition map gets
\[
\big[0,-\ln |a|\big]\times S^1\lra\big[\ln |a|,0\big]\times S^1
\,,\qquad
(s,t)\longmapsto\big(s+\ln |a|,t+\arg a\big)
\,,
\]
where $S^1=\partial\D$.
For the complex logarithm we use the main branch.

Observe that rotations $z\mapsto\rme^{-\rmi\theta_+}z$ and $w\mapsto\rme^{\rmi\theta_-}w$
for $\theta_+,\theta_-\in S^1$ of the coordinate planes, which correspond to
\[
(s,t)\longmapsto\big(s,t+\theta_+\big)
\qquad\text{and}\qquad
(s,t)\longmapsto\big(s,t+\theta_-\big)
\]
w.r.t.\ positive and negative holomorphic polar coordinates, resp., result into a change of the
defining equation to
\[
zw=\rme^{-\rmi(\theta_+-\theta_-)}a
\]
as the pull back along $(z,w)\mapsto\big(\rme^{-\rmi\theta_+}z,\rme^{\rmi\theta_-}w\big)$
yields $zw=a$.
The coresponding transition map is
\[
(s,t)\longmapsto\Big(s+\ln |a|,t+\arg a-\big(\theta_+-\theta_-\big)\Big)
\,.
\]
A switch of the coordinates $(z,w)\mapsto (w,z)$ does not effect the
proceeding consideration.

Given $\big[S,j,D,\{1,\rmi,-1\},A\big]\in\RR_{\tau}$ we describe a similar desingularisation about
a nodal pair $\{z_0,w_0\}\in D$ in terms of {\bf parametrised connected sum}.
Choose a small disc structure $\mathbf{D}_j$ on $\big(S,j,D,\{1,\rmi,-1\},A\big)$.
Denote the corresponding discs about the nodal points $z_0,w_0\in|D|$ by $D_{z_0}$ and
$D_{w_0}$, resp., and choose boundary points $z_{\partial}\in\partial D_{z_0}$ and
$w_{\partial}\in\partial D_{w_0}$.
We call the pair $\{z_{\partial},w_{\partial}\}$ a {\bf decoration} of the nodal pair $\{z_0,w_0\}$.
By \cite[Theorem C.5.1]{mcsa04} there exists unique biholomorphic identifications of
$\big((D_{z_0},z_0,z_{\partial}),j\big)$ and $\big((D_{w_0},w_0,w_{\partial}),j\big)$, resp., with
$\big((\D,0,1),\rmi\big)$.

For given {\bf gluing parameter} $a\in\D$, $a\neq0$, replace $-\ln|a|$ by the {\bf modulus}
\[
R=\rme^{1/|a|}-\rme
\]
in the discussion about the planar algebraic curve $\{zw=a\}$.
Identify the first annulus $\{\rme^{-R}\leq|z|\leq1\}$ with the second $\{\rme^{-R}\leq|w|\leq1\}$
via the transition map
\[
z\longmapsto\frac{\rme^{-R+\rmi\arg(a)}}{z}
\,,
\]
which w.r.t.\ positive and negative holomorphic polar coordinates reads as
\[
[0,R]\times S^1\lra[-R,0]\times S^1
\,,\qquad
(s,t)\longmapsto\big(s-R,t+\arg a\big)
\,.
\]
We obtain a surface $S_a$ from $S\setminus\big(\Int(D_{z_0})\cup\Int(D_{w_0})\big)$ by gluing
the finite cylinder $Z_a:=[0,R]\times S^1$, which is identified with $[-R,0]\times S^1$ via the
above transition map, along the respective boundary circles via the restrictions of the
biholomorphic identifications of $D_{z_0}$ and $D_{w_0}$, resp, with $\D$.

The construction of the surface $S_a$ defines a complex structure $j_a$ that coincides with $j$
on $S\setminus\big(\Int(D_{z_0})\cup\Int(D_{w_0})\big)$ and with $\rmi$ on the cylinder $Z_a$
of modulus $R$.
This results into an element $\big[S_a,j_a,D_a,\{1,\rmi,-1\},A\big]$ of $\RR_{\tau'}$ with stable
nodal type $\tau'$, which necessarily differs from $\tau$.
The respective special points are given by
\[
D_a:=D\setminus\big\{\{z_0,w_0\}\big\}
\,,
\]
$\{1,\rmi,-1\}$, and $A$ under the inclusion of
$S\setminus\big(\Int(D_{z_0})\cup\Int(D_{w_0})\big)$ into $S_a$.

A change of biholomorphic identifications of $D_{z_0}$ and $D_{w_0}$ with $\D$ is given by a
rotation of the boundary points $z_{\partial}$ and $w_{\partial}$, resp., which in coordinates
reads as $z\mapsto\rme^{-\rmi\theta_+}z$ and $w\mapsto\rme^{\rmi\theta_-}w$, say.
Gluing with the rotated identifications yields a biholomorphic copy
$\big(S_b,j_b,D_b,\{1,\rmi,-1\},A\big)$ of $\big(S_a,j_a,D_a,\{1,\rmi,-1\},A\big)$, where
\[
b=\rme^{-\rmi(\theta_+-\theta_-)}a
\,.
\]
To obtain a biholomorphic map take the identity map on
$S\setminus\big(\Int(D_{z_0})\cup\Int(D_{w_0})\big)$ and the rotated transition map
$Z_a\ra Z_b$ given by
\[
(s,t)\longmapsto\Big(s-R,t+\arg a-\big(\theta_+-\theta_-\big)\Big)
\]
on $Z_a$.

Such rotations naturally appear when a automorphism $\psi\in\GG_j$ for which the nodal points
$z_0$ and $w_0$ are fixed-points, i.e.\ $\psi(z_0)=z_0$ and $\psi(w_0)=w_0$, acts on $S$.
Indeed, $\psi$ preserves the complement of $D_{z_0}\cup D_{w_0}$ in $S$ and induces
rotations on $D_{z_0}\cup D_{w_0}$.
The rotations are measured by the change of decorations from $\{z_{\partial},w_{\partial}\}$
to $\psi\big(\{z_{\partial},w_{\partial}\}\big)$ in terms of angles $-\theta_+$ and $\theta_-$, say.
Therefore, we get a holomorphic diffeomorphism
\[
\big(S_a,j_a,D_a,\{1,\rmi,-1\},A\big)
\lra
\big(S_b,j_b,D_b,\{1,\rmi,-1\},A\big)
\]
as above which this time coincides with $\psi$ on
$S\setminus\big(\Int(D_{z_0})\cup\Int(D_{w_0})\big)$.

We denote by
\[
\D^D
\]
the set of all maps from the set of nodal points $D$ to the set $\D$ of complex numbers of
modulus less than or equal to $1$.
Choose a small disc structure $\mathbf{D}_j$ on $\big(S,j,D,\{1,\rmi,-1\},A\big)$
together with a decoration for each disc in $\mathbf{D}_j$.
The choice of decorations determine holomorphic diffeomorphisms of all discs of the disc
structure with $\D$ such that the nodal point is mapped to $0\in\D$ and the decoration to
$1\in\D$.
Given $\bfa\in\D^D$ we perform the described parametrised connected sum about each nodal pair
$\{z,w\}\in D$ with gluing parameter
\[
a_{\{z,w\}}:=\bfa\big(\{z,w\}\big)
\,.
\]
This is done by replacing the node $\{z,w\}$ with the cylinder $Z_{a_{\{z,w\}}}^{\{z,w\}}$.
In the case of a vanishing gluing parameter $a_{\{z,w\}}$ formally
\[
Z_0^{\{z,w\}}:=D_z\sqcup D_w
\]
is given by the disjoint union of half-infinite cylinders $[0,\infty)\times S^1$ and
$(-\infty,0]\times S^1$ of {\bf infinite modulus} after removing the nodal points $z$ and $w$.
In other words, if $a_{\{z,w\}}=0$ we do nothing and keep the nodal pair $\{z,w\}\in D_{\bfa}$,
so that $D_{\bfa}$ arises from $D$ by removing all nodal pairs $\{z,w\}$ with $a_{\{z,w\}}\neq0$.
The resulting surface is denoted by
\[
\big(S_{\bfa},j_{\bfa},D_{\bfa},\{1,\rmi,-1\},A\big)
\,.
\]
Starting off with a skyscraper deformation $V_j\ni y\mapsto j(y)$ and
a small disc structure of sufficiently small discs
we will get
\[
\big(S_{\bfa},j(y)_{\bfa},D_{\bfa},\{1,\rmi,-1\},A\big)
\]
by the same construction.

In order to describe the effect of the $\GG_j$-action of $\big(S,j,D,\{1,\rmi,-1\},A\big)$ on the
desingularisation we denote by $\kappa_{z,w}$, $\{z,w\}\in D$, the complex anti-linear map
$T_wS\ra T_zS$ that conjugated with the linearisations of the biholomorphic identifications of
the discs $D_w$ and $D_z$ with $\D$ is equal to the complex conjugation map
$x+\rmi y\mapsto x-\rmi y$ on $\C$.
Interchanging the role of $z$ and $w$ replaces $\kappa_{z,w}$ by its inverse
$\kappa_{w,z}=(\kappa_{z,w})^{-1}$.
We call $\kappa_{z,w}$ a {\bf compatible nodal identifier}.
Given $\psi\in\GG_j$ and $\{z,w\}\in D$ we define the {\bf phase function}
\[
\Theta_{\{z,w\}}(\psi)\co T_zS\lra T_zS
\]
by
\[
\Theta_{\{z,w\}}(\psi):=
\kappa_{z,w}\circ
T_{\psi(w)}(\psi)^{-1}\circ
\kappa_{\psi(w),\psi(z)}\circ
T_z\psi
\,.
\]
Taking positive and negative holomorphic polar coordinates about $z$ and $w$, resp., so that
$\psi$ acts in coordinates by multiplication with $\rme^{-\rmi\theta_+}$ and
$\rme^{\rmi\theta_-}$, resp., we get
\[
\big(\Theta_{\{z,w\}}(\psi)\big)(v)=
\rme^{-\rmi(\theta_+-\theta_-)}v
\]
for all $v\in T_zS$, which we simply declare to a multiplication operator
\[
\Theta_{\{z,w\}}(\psi)\equiv
\rme^{-\rmi(\theta_+-\theta_-)}
\,.
\]
This shows independence of the phase function
\[
\Theta\co D\times\GG_j\lra S^1
\,,\qquad
\big(\{z,w\},\psi\big)\longmapsto\Theta_{\{z,w\}}(\psi)
\,,
\]
of the chosen ordering of $\{z,w\}$ in the
definition of $\Theta_{\{z,w\}}(\psi)$ and of the chosen parity of the holomorphic polar
coordinates about $\psi(z)$ and $\psi(w)$.
This results in a $\GG_j$-action on $\D^D$ defined by $\psi_*\bfa=\bfb$ via
\[
b_{\{\psi(z),\psi(w)\}}:=
\Theta_{\{z,w\}}(\psi)\cdot a_{\{z,w\}}
\]
for all $\{z,w\}\in D$.
Consequently, for any skyscraper deformation $V_j\ni y\mapsto j(y)$,
a small disc structure of sufficiently small discs,
and $\psi\in\GG_j$ we get an isomorphism
\[
\psi_{\bfa}\co
\Big(S_{\bfa},j(\psi^*y)_{\bfa},D_{\bfa},\{1,\rmi,-1\},A\Big)
\lra
\Big(S_{\psi_*\bfa},j(y)_{\psi_*\bfa},D_{\psi_*\bfa},\{1,\rmi,-1\},A\Big)
\]
by the gluing construction and symmetry of $y\mapsto j(y)$.


\subsection{Topology and orbifold structure -- variable stable nodal type\label{subsec:topologyorbistrofrrvaryingnodaltype}}

 A neighbourhood base of a second countable paracompact Hausdorff topology on $\RR_N$,
 $N\geq0$, is given by the family of subsets of $\RR_N$, whose elements are of the form
 \[
 \big[S_{\bfa},k_{\bfa},D_{\bfa},\{1,\rmi,-1\},A\big]
 \]
 with $N=\#A$, which are obtained from a nodal disc $\big(S,k,D,\{1,\rmi,-1\},A\big)$ by the
 parametrised connected sum construction with given decorated small disc structure
 $\mathbf{D}_j$, with gluing parameter $\bfa\in\D^D$ with $|\bfa|<\varepsilon$ for some
 $\varepsilon\in(0,1)$, with complex structures $k$ that belong to an open neighbourhood
 of $j$ in $\JJ$ such that $k=j$ restricted to $\mathbf{D}_j$.
 This follows as in \cite[Proposition 2.4]{hwz-gw17} and
 \cite[Theorem 2.15 and Theorem 5.13]{hwz-dm12} because no extra argument for boundary
 un-noded nodal discs is needed caused by absence of boundary nodes.
 The Hausdorff property follows with Gromov compactness for stable holomorphic discs,
 see \cite{fz15}.
 
 The induced topology on $\RR_{\tau}$ in $\RR_N$ agrees with the one on $\RR_{\tau}$
 previously defined in Remark \ref{rem:topologyfixedstabletype}. 
 The induced notion of convergence of sequences in $\RR_N$ coincides with Gromov
 convergence as described in \cite[Chapter 1]{abb14}, \cite[Appendix B]{wen05} or in
 \cite[Section 4]{behwz03}, \cite[Chapter IV]{hum97} after Schwartz reflection along the
 boundary of the nodal discs for example.
 
In order to obtain an orbifold structure on $\RR_N$ we consider desingularisations
\[
\big(S_{\bfa},j(y)_{\bfa},D_{\bfa},\{1,\rmi,-1\},A\big)
\]
of $\big(S,j,D,\{1,\rmi,-1\},A\big)$ as described in Section
\ref{subsec:varstabnodtypeviadesing}.
For $\bfa_0\in\D^D$ consider the set $D\setminus D_{\bfa_0}$ of all nodal pairs
$\{z,w\}\in D$ on which the map $\bfa_0$ is non-zero.
Define a deformation
\[
\mathfrak{j}_{\bfa_0}\co
V_{j_{\bfa_0}}\times\D^{D\setminus D_{\bfa_0}}
\lra\JJ_{S_{\bfa_0}}
\,,\qquad
(y,\bfb)\longmapsto j(y)_{\bfa_0+\bfb}
\,,
\]
of
\[
\big(S_{\bfa_0},j(y)_{\bfa_0},D_{\bfa_0},\{1,\rmi,-1\},A\big)
\]
by setting $\bfa=\bfa_0+\bfb$.
For small deformation parameter $\bfb$ the deformed family of surfaces equals
\[
\Big(S_{\bfa_0+\bfb},j(y)_{\bfa_0+\bfb},D_{\bfa_0+\bfb},\{1,\rmi,-1\},A\Big)
\,.
\]
The nodal discs family is isomorphic to 
\[
\Big(S_{\bfa_0},j'(y)_{\bfb},D_{\bfa_0},\{1,\rmi,-1\},A\Big)
\]
with corresponding deformation
\[
\mathfrak{j}'_{\bfb}\co
V_{j_{\bfa_0}}\times\D^{D\setminus D_{\bfa_0}}
\lra\JJ_{S_{\bfa_0}}
\,,\qquad
(y,\bfb)\longmapsto j'(y)_{\bfb}
\,,
\]
via an isomorphism that is the identification map on the complement of the respective small
disc structure, so that the deformation is given by rotations and stretchings of the cylindrical
neck regions that correspond to the nodes, on which $\bfa_0$ not vanishes.
The {\bf partial Kodaira differential} of $\mathfrak{j}'_{\bfb}$ at $(y,0)$ is
\[
\big[T_{(y,0)}\mathfrak{j}'_{\bfb}\big]\co
E_{j_{\bfa_0}}\times\C^{D\setminus D_{\bfa_0}}
\lra
\Omega^{0,1}_{{j(y)}_{\bfa_0}}\lra H^1_{{j(y)}_{\bfa_0}}
\,.
\]

Similarly to \cite[Theorem 2.13]{hwz-gw17} one constructs uniformisers of an orbifold structure
on $\RR_N$ as the above desingularisations stay away from the boundary of the nodal discs
in $\RR_N$.
For given $\big[S,j,D,\{1,\rmi,-1\},A\big]\in\RR_N$ such a {\bf uniformiser}
is a deformation
\[
\VV\ni(y,\bfa)\longmapsto\big(S_{\bfa},j(y)_{\bfa},D_{\bfa},\{1,\rmi,-1\},A\big)
\]
of $\big(S,j,D,\{1,\rmi,-1\},A\big)$ for an open subset $\VV$ of $V_j\times\D^D$
such that the following holds:

\begin{itemize}
 \item
 The union of all equivalence classes $\big[S_{\bfa},j(y)_{\bfa},D_{\bfa},\{1,\rmi,-1\},A\big]$
 over all $(y,\bfa)\in\VV$ is an open subset of $\RR_N$.
 \item
 The map $\VV\ra\UU$ that assigns to $(y,\bfa)$ the class
 $\big[S_{\bfa},j(y)_{\bfa},D_{\bfa},\{1,\rmi,-1\},A\big]$ descends to a homeomorphism
 $\VV/\GG_j\ra\UU$.
 \item
 An isomorphism between the classes belonging to $(y,\bfa),(z,\bfb)\in\VV$
 is given by $\psi_{\bfa}$ for $\psi\in\GG_j$ and $(z,\bfb)=\big(\psi_*y,\psi_*\bfa\big)$.
 \item
 For all points in $\VV$ the partial Kodaira differential is an isomorphism.
\end{itemize}

Compatibility of uniformisers is expressed via the sets
\[
\mathbf{T}_{j,k}:=
\Big\{
 \big(\varphi,(y,\bfa),(z,\bfb)\big)
\Big\}
\subset\GG\times(V_j \times\D^D)\times(V_k \times\D^D)
\]
corresponding to all isomorphisms
\[
\varphi\co
\Big(S_{\bfa},j(y)_{\bfa},D_{\bfa},\{1,\rmi,-1\},A\Big)
\lra
\Big(S_{\bfb},k(z)_{\bfb},D_{\bfb},\{1,\rmi,-1\},A\Big)
\,,
\]
which are smooth manifolds of dimension $2\#A$, so that $\RR_N$ supports an \'etale proper Lie
groupoid structure as formulated after Proposition \ref{prop:rtauisanorbifold}.
This follows with the (anti-)gluing construction (\cite[Section 2.4] {hwz-gw17}) for the
non-linear Cauchy--Riemann operator
along the nodes (which take place away from the boundary) known from Floer theory, cf.\
\cite[Theorem 2.16]{hwz-gw17} and \cite[Theorem 2.24]{hwz-dm12}.
Similarly, the universal property of the construction stated in \cite[Theorem 2.16]{hwz-gw17}
translates into the present situation.
The involved variation of marked points can be treaded as in \cite[Remark 3.17]{hwz-dm12}.
Finally, using convex interpolation between the exponential gluing profile $\rme^{1/r}-\rme$ we used
in the gluing construction and the logarithmic gluing profile $-\ln r$ that appeared in the
desingularisation of the complex algebraic curve at the beginning of Section
\ref{subsec:varstabnodtypeviadesing} naturally yields an orientation on $\RR_N$ that extends the
complex orientation on $\RR_{\tau}$ given in Proposition \ref{prop:rtauisacomplexorbifold},
see \cite[Section 2.3.2]{hwz-dm12}.

\begin{thm}
 \label{thm:rnisaorientedorbifold}
 The nodal Riemann moduli space $\RR_N$
 of stable nodal boundary un-noded discs
 with $N=\#A$ interior marked points
 admits a naturally oriented orbifold structure of
 dimension $2\#A$.
\end{thm}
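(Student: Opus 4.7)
The plan is to assemble the structure directly from the uniformisers, transition data, and gluing analysis already described in Sections \ref{subsec:varstabnodtypeviadesing} and \ref{subsec:topologyorbistrofrrvaryingnodaltype}. First I would verify that each of the maps
\[
\mathfrak{p}\co\VV/\GG_j\lra\UU\,,\qquad[y,\bfa]\longmapsto\big[S_{\bfa},j(y)_{\bfa},D_{\bfa},\{1,\rmi,-1\},A\big]\,,
\]
coming from a skyscraper deformation $\mathfrak{j}\co V_j\ra\JJ$ (Section \ref{subsec:skyscraperdeformation}) combined with the decorated parametrised connected sum with gluing parameter $\bfa\in\D^D$, really defines a homeomorphism onto an open set. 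Openness in $\RR_N$ is immediate from the description of the neighbourhood base at the beginning of Section \ref{subsec:topologyorbistrofrrvaryingnodaltype}. The homeomorphism property is the main analytic step: it reduces to showing that after fixing the small disc structure, any nearby stable marked nodal disc arises uniquely up to the residual $\GG_j$-action from some $(y,\bfa)$; this is where the (anti-)gluing construction of Hofer--Wysocki--Zehnder is used, but only away from $\partial S$, so no new boundary estimates are required beyond the non-linear Cauchy--Riemann implicit function theorem already used in Section \ref{subsec:orbifoldstrfixdstabtype}.

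Second I would record the dimension count. The local slice $V_j\subset E_j$ has real dimension $2(\#A-\#D)$ by Section \ref{subsec:fredindofinfiact}, and $\D^D$ contributes $2\#D$ real parameters, so $\VV\subset V_j\times\D^D$ has real dimension $2\#A$, matching the claim. Effectivity at the reference point $(0,0)$ is precisely the hypothesis that the partial Kodaira differential
\[
\big[T_{(y,0)}\mathfrak{j}'_{\bfb}\big]\co E_{j_{\bfa_0}}\times\C^{D\setminus D_{\bfa_0}}\lra H^1_{{j(y)}_{\bfa_0}}
\]
is an isomorphism; by openness of the invertibility locus it remains an isomorphism in a neighbourhood, so the fourth bullet in Section \ref{subsec:topologyorbistrofrrvaryingnodaltype} is satisfied on $\VV$ after shrinking.

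Third I would establish compatibility of charts, i.e.\ that the sets $\mathbf{T}_{j,k}$ of isomorphisms
\[
\varphi\co\big(S_{\bfa},j(y)_{\bfa},D_{\bfa},\{1,\rmi,-1\},A\big)\lra\big(S_{\bfb},k(z)_{\bfb},D_{\bfb},\{1,\rmi,-1\},A\big)
\]
inherit a smooth manifold structure of dimension $2\#A$, with source and target being local diffeomorphisms. This is the combination of the implicit function argument from Section \ref{subsec:orbifoldstrfixdstabtype} with the HWZ gluing/anti-gluing construction \cite[Theorem 2.16]{hwz-gw17}, performed cylinder-by-cylinder on each decorated node; the universal property translates verbatim since all nodes lie in $\Int S$ and the boundary $\partial S\cup\{1,\rmi,-1\}$ is frozen by the skyscraper condition. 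Properness of $s\times t$ follows from Gromov compactness for stable nodal discs \cite{fz15} (or equivalently the Deligne--Mumford compactness after Schwartz reflection), and \'etaleness from the dimension equality, giving the \'etale proper Lie groupoid. Hausdorffness of $\RR_N$ is then the properness argument of Remark \ref{rem:topologyfixedstabletype} combined once more with Gromov compactness.

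Finally, for the orientation I would patch together the complex orientations on each stratum $\RR_{\tau}$ given by Proposition \ref{prop:rtauisacomplexorbifold} using convex interpolation between the exponential gluing profile $\rme^{1/r}-\rme$ employed in Section \ref{subsec:varstabnodtypeviadesing} and the logarithmic profile $-\ln r$ of the standard complex desingularisation of $\{zw=a\}$, following \cite[Section 2.3.2]{hwz-dm12}. This produces a well-defined orientation on the transition charts which restricts to the complex orientation on each top-dimensional stratum. The main obstacle in the whole argument is the compatibility step, since it requires the HWZ (anti-)gluing analysis to hand-deliver the smooth structure on $\mathbf{T}_{j,k}$; however, the bordered case is actually simpler than the closed case treated in \cite{hwz-gw17,hwz-dm12} precisely because there are no boundary nodes and the boundary data $\{1,\rmi,-1\}\subset\partial S$ is fixed throughout all deformations.
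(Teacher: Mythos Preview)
Your proposal is correct and follows essentially the same approach as the paper: the paper likewise builds uniformisers from skyscraper deformations combined with the parametrised connected sum, establishes compatibility via the sets $\mathbf{T}_{j,k}$ using the HWZ (anti-)gluing construction \cite[Theorem 2.16]{hwz-gw17} (noting, as you do, that the absence of boundary nodes means no extra arguments are needed), obtains properness and Hausdorffness via Gromov compactness, and produces the orientation by convex interpolation between the exponential and logarithmic gluing profiles as in \cite[Section 2.3.2]{hwz-dm12} so as to extend the complex orientation on each $\RR_{\tau}$ from Proposition~\ref{prop:rtauisacomplexorbifold}. Your dimension count $2(\#A-\#D)+2\#D=2\#A$ and your identification of the partial Kodaira differential as the effectivity condition also match the paper's treatment.
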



\section{Polyfold perturbations}
\label{sec:polyfoldperturbations}

We prove Theorem \ref{thm:maindirectsyplcoborthm} under assumption (ii).
For that we place ourselves into the situation of Section \ref{subsec:thesemiposcase}
and follow the line of reasoning of the proof of Theorem \ref{thm:maindirectsyplcoborthm}
part (i).
As we will not assume semi-positivity this time regularity of relevant moduli spaces
can only be achieved for simple nodal holomorphic discs via perturbing the almost
complex structure, cf.\ Section \ref{subsec:thesemiposcase}.
For non-simple nodal holomorphic discs we will use additional abstract polyfold
perturbations as introduced in \cite{hwz-gw17}.


\subsection{Boundary un-noded stable disc maps}
\label{subsec:boundunnodstabdiscmaps}

We consider the tame almost complex manifold $\big(\hat{W},\hat{\Omega},\hat{J}\big)$
defined in Section \ref{subsec:complthecob}.
For boundary un-noded nodal discs $(S,j,D)$ as introduced in Section
\ref{subsec:boundunnodnoddisc} we consider smooth maps
\[
u\co(S,\partial S)\lra(\hat{W},N^*)
\]
that descend to continuous maps on $S/D$.
If $D$ is empty we call $u$ {\bf un-noded}.
If in addition $Tu\circ j=\hat{J}(u)\circ Tu$ we call $u$ a {\bf nodal holomorphic disc map}.
Observe that we do not need to consider nodal points on the boundary due to the Gromov
compactification described in Remark \ref{rem:gromovcomp}.

More generally, we consider continuous maps $u\co(S,\partial S)\ra(\hat{W},N^*)$ defined on a
marked boundary un-noded nodal disc $\big(S,j,D,\{m_0,m_1,m_2\}\big)$ (see Section
\ref{subsec:boundunnodnoddisc}) such that $u$ descends to a continuous map on the quotient
$S/D$ and such that $u\big(\!\Int S\big)\subset\Int\hat{W} $.
Moreover, we require that $u$ is contained in the Sobolev space of square integrable maps
\[
H^{3,\sigma}(S,j)\equiv
H^{3,\sigma}\big(S,j,D,\{m_0,m_1,m_2\}\big)
\]
following \cite[Definition 1.1]{hwz-gw17}:
We require that $u$ is of class $u\in H_{\loc}^3\big(S\setminus|D|\big)$ and that w.r.t.\ positive
holomorphic polar coordinates $[0,\infty)\times S^1$, $S^1=\R/2\pi\Z$, about the nodal points
$|D|$ (see Section \ref{subsec:varstabnodtypeviadesing}) the map $u$ is of weighted Sobolev
class $H^{3,\sigma}$.
The weights are given by $\rme^{\sigma s}$, $s\in[0,\infty)$, for some $\sigma\in(0,1)$.
In other words, $u$ is contained in $H^{3,\sigma}$ precisely if all weak derivatives
$D^{\alpha}u$, $|\alpha|\leq3$, of $u$ on $[0,\infty)\times S^1$ exist and all
$D^{\alpha}u\cdot\rme^{\sigma s}$, $|\alpha|\leq3$, are square integrable on
$[0,\infty)\times S^1$.
The latter is equivalent to $u\,\rme^{\sigma s}\in H^3$ on $[0,\infty)\times S^1$.
In particular, by Sobolev embedding, $u$ is $C^1$ (up to the boundary $\partial S$)
restricted to $S\setminus|D|$.
But in general $u$ is not differentiable at the nodal points $|D|$ on $S$.
Consider for example the continuous function $u(z)=|z|^{\frac{1+\sigma}{2}}$ in holomorphic
coordinates $z\in\C$, which w.r.t.\ positive holomorphic polar coordinates reads as
$(s,t)\mapsto\rme^{-\frac{1+\sigma}{2}s}$.

The space $H^{3,\sigma}(S,j)$ is well defined, i.e.\ invariant under coordinate changes after
possibly shrinking the chart domains.
Away from the nodes $|D|$ this follows as for $H_{\loc}^3\big(S\setminus|D|\big)$ via
\cite[Theorem 3.41]{af03}.
Near the nodes we observe that the area form $\rme^{2\sigma s}\rmd t\wedge\rmd s$
w.r.t.\ positive holomorphic polar coordinates corresponds to the singular area form
$|z|^{-2(1+\sigma)}\frac{\rmi}{2}\rmd z\wedge\rmd\bar{z}$ in holomorphic coordinates
about the nodal point $0\in\C$.
The area form $\frac{\rmi}{2}\rmd z\wedge\rmd\bar{z}$ transforms under biholomorphic
coordinate changes via a conformal factor, which we can assume to be bounded above and
away from zero by shrinking the chart domains if necessary.
The coordinate change itself is of the form $z\mapsto zh(z)$, where $0$ corresponds to a nodal
point.
Here $h$ is a holomorphic function, whose absolute value can be assumed to be bounded
above and away from zero also.
Consequently, the singular area form $|z|^{-2(1+\sigma)}\frac{\rmi}{2}\rmd z\wedge\rmd\bar{z}$
transforms via a bounded above and away from zero conformal factor also.
Hence, the same holds true for $\rme^{2\sigma s}\rmd t\wedge\rmd s$.
In fact, the above coordinate change becomes
\[
(s,t)\longmapsto(s,t)-\ln\Big(h\big(\rme^{-(s+\rmi t)}\big)\Big)
\,,
\]
whose derivatives are bounded above and whose first derivative is bounded away from zero.
Therefore, invariance under coordinate changes near the nodes follows as in
\cite[Theorem 3.41]{af03}.
By the same arguments we see that locally defined norms on $H_{\loc}^3$ and $H^{3,\sigma}$
transform via the respective coordinate changes to equivalent norms.
This defines a topology on $H^{3,\sigma}(S,j)$; a neighbourhood base is given by the set of
those maps that restricted to one of the above charts belong to an open set in $H_{\loc}^3$
and $H^{3,\sigma}$, resp.

To each $u\in H^{3,\sigma}(S,j)$ we assign the {\bf symplectic energy integral}
\[
\int_Su^*\hat{\Omega}
\]
by approximating the continuous map $u$ by a $C^1$-map $v$ and defining the symplectic energy
integral via $\int_Su^*\hat{\Omega}:=\int_Sv^*\hat{\Omega}$.
This is well defined and, in fact, by Stokes theorem, independent of the choice of representative
of the homology class $[u]$ in $\hat{W}$ relative $u(\partial S)\subset N^*$.
This can be seen as follows:
Taking approximations $v$ of $u$ that are equal to $u$ restricted to the complement of disc
like neighbourhoods $B_r(0)$ in $S$ of the nodal points $0\in|D|$ the symplectic energy integral
is given by $\int_{S\setminus|D|}u^*\hat{\Omega}$.
Indeed, take $r>0$ so small such that the $B_r(0)$ are contained in pair-wise disjoint chart
domains of $S$ about the nodal points $0$ in $|D|$ and such that the $u\big(B_r(0)\big)$ are
contained in pair-wise disjoint ball like chart domains of $\hat{W}$.
By Stokes theorem decomposing
$B_r(0)=\big(B_r(0)\setminus B_{\varepsilon}(0)\big)\cup B_{\varepsilon}(0)$ it suffices to show
that the integrals
\[
\int_{B_{\varepsilon}(0)}u^*\hat{\Omega}
\qquad
\text{and}
\qquad
\int_{\partial B_{\varepsilon}(0)}u^*\lambda
\]
converge to zero as $\varepsilon\in(0,r)$ tends to $0$, where $\lambda$ is a local primitive of
$\hat{\Omega}$ defined on the ball like neighbourhoods of $u(|D|)$ in $\hat{W}$.
By the transformation formula we can compute the integrals w.r.t.\ positive holomorphic polar
coordinates via
\[
\int_{(R,\infty)\times S^1}\hat{\Omega}(u_s,u_t)\,\rmd s\wedge\rmd t
\qquad
\text{and}
\qquad
\int_{\{R\}\times S^1}\lambda(u_t)\,\rmd t
\]
for $R=-\ln\varepsilon$.
By the Sobolev inequality the $C^1$-norm of $u\,\rme^{\sigma s}$ on $[0,\infty)\times S^1$ is
bounded by $\|u\|_{3,\sigma}$, so that up to a positive constant the absolute value of the
integrals is bounded by
\[
\|u\|_{3,\sigma}^2\int_R^{\infty}\rme^{-2\sigma s}\rmd s
\qquad
\text{and}
\qquad
\|u\|_{3,\sigma}\,\rme^{-\sigma R}
\,,
\]
resp.
In both cases the first factor is bounded by assumption; the second tends to zero for $R\ra\infty$
and the claim follows, namely, that $\int_Su^*\hat{\Omega}$ is well defined.

\begin{rem}
\label{symplenerintisc0}
 The above arguments show that $u\mapsto \int_Su^*\hat{\Omega}$ is a continuous function
 on $H^{3,\sigma}(S,j)$.
\end{rem}

We call $\big(S,j,D,\{m_0,m_1,m_2\},u\big)$ a {\bf nodal disc map}
provided that the following conditions are satisfied
(cf.\ Section \ref{subsec:thesemiposcase}):

\begin{enumerate}
  \item
  $u\in H^{3,\sigma}(S,j)$,
  \item
  the {\bf symplectic energy integral} restricted to a connected component $C$ of $S$
	 \[
	 \int_Cu^*\hat{\Omega}\geq0
	 \]
  is non-negative for all spherical components $C$ of $S$; positive on the disc component,
  \item
  the continuous map on $S/D$ induced by $u$ is homologous to a local Bishop discs
  $u_{\varepsilon,b_o}$ relative $N^*$, so that $[u(S)]=[u_{\varepsilon,b_o}(\D)]$ in
  $H_2(\hat{W},N^*)$, and
  \item
  $u(m_0)\in\gamma$ and $\vartheta\circ u(m_k)=\rmi^k$ for $k=1,2$.
\end{enumerate}

\noindent
For given $\big(S,j,D,\{m_0,m_1,m_2\}\big)$ the space
\[
\HH^{3,\sigma}(S,j)
\]
of nodal disc maps is called the {\bf space of admissible maps}.

It follows that the degree of the $C^1$-map $\vartheta\circ u\co\partial S\ra S^1$ equals
$1$ for all nodal disc maps $\big(S,j,D,\{m_0,m_1,m_2\},u\big)$.
With the properties of the symplectic energy integral discussed above we obtain as in item
(2) of Section \ref{subsec:gromovcomp} that
\[
\int_Su^*\hat{\Omega}=
\int_{\partial S} u^*f\cdot(\vartheta\circ u)^*\rmd\theta
\,,
\]
where $f$ is a smooth function on $N$ that is positive on $N^*$ and vanishes on
$B\cup\partial N$.
As $u$ takes values in $N^*$ along the boundary $\partial S$ we get that
\[
\int_Su^*\hat{\Omega}
\in\big(0,2\pi\max f\big]
\]
for all nodal disc maps $\big(S,j,D,\{m_0,m_1,m_2\},u\big)$.
By non-negativity of the symplectic energy integral on each connected component $C$ of $S$ we
get that $\int_Cu^*\hat{\Omega}$ takes values in $\big[0,2\pi\max f\big]$.
Moreover, as $\int_Cu^*\hat{\Omega}$ only depends on the homology class represented by $u(C)$
for the spherical components $C$ of $S$ assumption (2) puts an open condition to the space
defined via $H^{3,\sigma}(S,j)$ and the constraints given by (3) and (4), so that the space of
admissible maps $\HH^{3,\sigma}(S,j)$ is an open subset.

In fact, $\HH^{3,\sigma}(S,j)$ is a Hilbert manifold whose tangent space
$\HH^{3,\sigma}\big(u^*T\hat{W}\big)$ at $u\in\HH^{3,\sigma}(S,j)$ is the space of
$H^{3,\sigma}$-sections into $u^*T\hat{W}$ that descent to continuous sections on $S/D$,
that are tangent to $N^*$ along $\partial S$ as well as tangent to $\gamma$ at $m_0$ and
to the page $\vartheta^{-1}(\rmi^k)$ at $m_k$ for $k=1,2$.
This follows with the exponential map taken w.r.t.\ a metric on $\hat{W}$ for which each of the
submanifolds $N$, $\vartheta^{-1}(\rmi^k)$, $k=1,2$, and $\gamma$ is totally geodesic.
The requirement for the sections to be of class $H^{3,\sigma}$ is understood as in
Section \ref{subsec:boundunnodstabdiscmaps}, so that a norm on
$\HH^{3,\sigma}\big(u^*T\hat{W}\big)$ as on \cite[p.~66]{hwz-gw17} can be defined.
This turns $\HH^{3,\sigma}(S,j)$ into a Riemannian Hilbert manifold.

By removal of singularities (see \cite{mcsa04})
a holomorphic $u\in H^{3,\sigma}(S,j)$,
which is continuous and has finite symplectic energy by the above discussion,
is holomorphic on $S$.
Therefore,
$u$ is smooth up to the boundary including all nodal points $|D|$
so that holomorphicity coincides with the notion of holomorphicity
from the beginning of this section.

Given a nodal disc map $\big(S,j,D,\{m_0,m_1,m_2\},u\big)$
we call a connected component $C$ of $S$
with vanishing symplectic energy integral a {\bf ghost bubble}.
Observe that a holomorphic nodal disc map restricted to a ghost bubble is constant.
If $\big(S,j,D,\{m_0,m_1,m_2\},u\big)$ is any nodal disc map
such that each ghost bubble admits at least $3$ nodal points,
then we call $\big(S,j,D,\{m_0,m_1,m_2\},u\big)$ a {\bf stable nodal disc map}.


\subsection{Boundary un-noded stable discs}
\label{subsec:boundunnodstabdiscs}

We call two stable nodal disc maps
\[
\big(S,j,D,\{m_0,m_1,m_2\},u\big)
\quad
\text{and}
\quad
\big(S',j',D',\{m'_0,m'_1,m'_2\},u'\big)
\]
{\bf equivalent} if there exists a diffeomorphism $\varphi\co S\ra S'$
such that $\varphi^*j'=j$,
the injection $D\ra D'$ defined by
$\{\varphi(x),\varphi(y)\}\in D'$ for all $\{x,y\}\in D$ is surjective,
$\varphi(m_k)=m'_k$ for $k=0,1,2$, and $u'\circ\varphi=u$.
The discussions in Section \ref{subsec:boundunnodstabdiscmaps} about $H^{3,\sigma}(S,j)$
imply that this equivalence relation is well defined.
The equivalence classes
\[
\mathbf u=\big[S,j,D,\{m_0,m_1,m_2\},u\big]
\]
are called {\bf stable nodal discs} in $\big(\hat{W},\hat{\Omega}\big)$ relative $N^*$.
The space of all equivalence classes is denoted by $\ZZ$.

Fixing the diffeomorphism type of $S$ and the combinatorial data
$\big(D,\{1,\rmi,-1\}\big)$ of $\big(S,j,D,\{1,\rmi,-1\},u\big)$ as at the beginning of
Section \ref{subsec:groupoidasorbitspace} we can write
\[
\ZZ=\Big\{
\bfu=[j,u]\,\,\text{stable}
\,\Big|\,
[u(S)]=[u_{\varepsilon,b_o}(\D)]
\,,\,\,
u(1)\in\gamma
\,,\,\,
\vartheta\circ u(\rmi^k)=\rmi^k
\,,
k=1,2
\Big\}
\]
for the {\bf space of all stable nodal discs}
\[
\bfu=\big[S,j,D,\{1,\rmi,-1\},u\big]\equiv[j,u]
\,,\quad
u\in \HH^{3,\sigma}(S,j)\,,
\]
in $\big(\hat{W},\hat{\Omega}\big)$ relative $N^*$.

We define the nodal type $\tau$ of $\big(S,D,\{1,\rmi,-1\}\big)$
as in Section \ref{subsec:domainstabilisation}.
Namely, the nodal type is the isomorphism class of the rooted tree
given as follows:
The vertices correspond to the components of $S$.
The root is given by the disc component.
The edge relation is induced by the nodes in $D$.
As this time there are no auxiliary marked points
all vertices different from the root are not weighted;
the root has weight $3$.
The induced nodal type $\tau$ is necessarily unstable
provided that there is at least one sphere component.
Indeed,
in this case, any end of a branch admits only one special point.

We denote by $\ZZ_{\tau}$
the {\bf space of all stable nodal discs of nodal type} $\tau$,
so that $\ZZ$ is the disjoint union of the $\ZZ_{\tau}$
where $\tau$ ranges over all nodal types just described.
Each of the subspaces $\ZZ_{\tau}$ of $\ZZ$ is the quotient of the total space of the fibration
over $\JJ\equiv\JJ(S)$ with fibre $\HH^{3,\sigma}(S,j)$ over $j\in\JJ$ by the action
$\varphi\mapsto(\varphi^*j,u\circ\varphi)$
of the group of orientation preserving diffeomorphisms $\varphi$ of $S$ preserving
$\big(D,\{1,\rmi,-1\}\big)$, cf.\ Section \ref{subsec:groupoidasorbitspace}.
This puts a topology to $\ZZ_{\tau}$ similarly to Remark \ref{rem:topologyfixedstabletype}.

Notice that the stabiliser of the action is finite by the stability condition formulated at the end of
Section \ref{subsec:boundunnodstabdiscmaps}:
Each automorphism of $\bfu$ acts via the identity map on the disc component due to the
ordered boundary marked points $\{1,\rmi,-1\}$.
If $\int_Cu^*\hat{\Omega}=0$ for a connected component $C$ of $S$, then the number of
nodal points $C\cap|D|$ on $C$ is at least $3$.
Furthermore, the automorphisms of $\bfu$ preserve those ghost components
due to the transformation formula.
If $\int_Cu^*\hat{\Omega}>0$, one finds $z\in C\setminus|D|$ such that $u$ is immersive on
$C\cap u^{-1}\big(u(z)\big)$ defining finitely many local branches via
$C\cap u^{-1}\big(B_r\big)\subset C\setminus|D|$ for a sufficiently small ball
$B_r\subset\Int\hat{W}$ around $u(z)$.
In fact, due to the positivity of the symplectic energy integral we can find $z\in C\setminus|D|$
and $r>0$ sufficiently small such that $u^*\hat{\Omega}$ is a positive area form on the branch
through $z$, which is oriented via $j$.
Observe that $u^*\hat{\Omega}$ is a positive area form on all branches through the points of
the orbit (of the automorphism group of $\bfu$) defined by $z$.
Identifying the sphere components with $(\C P^1,\rmi)$ as in Remark \ref{rem:isotropyisfinite}
the identity theorem yields that an automorphism of $\bfu$ acts by a permutation on the local 
branches.
This proves finiteness of the stabiliser.

In the following we describe a polyfold structure on $\ZZ$ that glues the components
$\ZZ_{\tau}$ together.
For any $\bfu=\big[S,j,D,\{1,\rmi,-1\},u\big]$ in $\ZZ$
one can choose a so-called {\bf stabilisation},
which is a finite set of auxiliary marked points $A\subset S$
disjoint from the special points $D\cup\{1,\rmi,-1\}$ 
such that the nodal disc $\big(S,j,D,\{1,\rmi,-1\},A\big)$ is stable
in the sense of Section \ref{subsec:domainstabilisation}.
Due to the stability condition
there is no need to provide the ghost bubbles with an auxiliary markt point.
In addition one can assume,
that the automorphisms of $\bfu$ preserve $A$,
$u(A)$ is disjoint from the $u$-image of $D\cup\{1,\rmi,-1\}$,
and the following two conditions hold:

\begin{enumerate}
  \item
  Whenever $z,w\in A$ are mapped to the same point $u(z)=u(w)$ in $\hat{W}$,
  then there exists an automorphism of $\bfu$ sending $z$ to $w$.
  \item
  For all $z\in A$ the $2$-form $(u^*\hat{\Omega})_z$ is positive on $(T_zS,j_z)$.
\end{enumerate}

\noindent
This follows with \cite[Lemma 3.2]{hwz-gw17} ignoring the disc component,
which already is stable:
Namely,
successively select finite orbits of the action of the automorphism group of $\bfu$
on local branches similarly to the above finiteness argument
until all components are stable.
Consequently,
the underlying stable nodal disc $\big[S,j,D,\{1,\rmi,-1\},A\big]$
possesses a uniformiser
as described in Section \ref{subsec:topologyorbistrofrrvaryingnodaltype}.

As in Section \ref{subsec:thesemiposcase}
we wish to achieve an index-$1$ Fredholm problem.
In view of Theorem \ref{thm:rnisaorientedorbifold}
we compensate the stabilising auxiliary marked points $A$ index-wise as follows:
We choose a finite collection of pairwise disjoint
codimension-$2$ symplectic discs in $(\Int\hat{W},\hat{\Omega})$
that intersect $u(S)$ along $u(A)$ transversally.
This is possible by condition (2) above.
Namely,
the image of $Tu$ at each auxiliary marked point in $A$
is a symplectic plane in $T\hat{W}$.
Integrating the respective symplectic normal subspaces
one finds symplectic embeddings of small discs of codimension $2$
that are normal to $u(S)$ at the images of the auxiliary marked points $u(A)$.
We call the union of the discs $H_{u,A}$ {\bf local transversal constraints}
if the intersection of $u(S)$ and $H_{u,A}$ equals $u(A)$
and if each component of $H_{u,A}$ intersects $u(S)$ in a single point.

We denote by
\[
E_{u,A}\subset\HH^{3,\sigma}\big(u^*T\hat{W}\big)
\]
the subspace of sections that are tangent to $H_{u,A}$ at the stabilising auxiliary points in $A$,
which is scale-linear w.r.t.\ to $(3+\nu,\sigma_{\nu})$, $\nu\in\N_0$, for a strictly increasing
sequence $\sigma_{\nu}$ in $(0,1)$ with $\sigma_0=\sigma$, see \cite[Section 2.6]{hwz-gw17}.
Uniformiser about any $\bfu=\big[S,j,D,\{1,\rmi,-1\},u\big]$ in $\ZZ$ of the desired polyfold
structure are obtained as in \cite[Section 3.1/3.2]{hwz-gw17}.
To adapt to our situation start off with uniformisers for the stabilised domain
$\big[S,j,D,\{1,\rmi,-1\},A\big]\in\RR$ from
Section \ref{subsec:topologyorbistrofrrvaryingnodaltype} and consider the deformation
\[
(y,\bfa,\eta)
\longmapsto
\big(S_{\bfa},j(y)_{\bfa},D_{\bfa},\{1,\rmi,-1\},\oplus_{\bfa}\exp_u(\eta)\big)
\,,
\]
where $(y,\bfa)\in\VV$ for an open subset $\VV$ of $V_j\times\D^D$, $\eta\in E_{u,A}$ is a
sufficiently small section that is a fixed point of the splicing projection $\pi_{\bfa}$ and
$\oplus_{\bfa}\exp_u(\eta)$ denotes the gluing operation both introduced in
\cite[Section 2.4/2.5]{hwz-gw17}.
Choosing $u$ to be a smooth approximation of an element in $H^{3,\sigma}(S,j)$ we obtain
scale-smooth gluing maps w.r.t.\ to the scale $(3+\nu,\sigma_{\nu})$,
see \cite[Section 2.2/2.6]{hwz-gw17}.
Using Remark \ref{rem:topologyfixedstabletype},
Section \ref{subsec:topologyorbistrofrrvaryingnodaltype} and \cite[Section 3.3/3.4]{hwz-gw17}
one obtains a natural second countable paracompact Hausdorff topology on $\ZZ$ similarly to
\cite[Theorem 1.6]{hwz-gw17}.
In the same way using this time modifications in \cite[Section 3.5]{hwz-gw17} the space
$\ZZ$ carries the structure of a polyfold as formulated in \cite[Theorem 1.7]{hwz-gw17} with
a scale-smooth evaluation map $\ZZ\ra\gamma$ sending $\bfu$ to $u(1)$,
cf.\ \cite[Theorem 1.8]{hwz-gw17}.


\subsection{A nodal moduli space\label{subsec:modulisp}}

We call $\bfu$ a {\bf stable nodal holomorphic disc} if $\bfu$ can be represented
by a stable nodal holomorphic disc map $u$. 
Notice, that all stable nodal disc maps $u$ that represent a stable nodal holomorphic disc
$\bfu$ are holomorphic.
Denote by
\[
\NN:=\big\{
\bfu\in\ZZ
\,\big|\,
\bfu
\,\,\,\text{is holomorphic}
\big\}
\]
the {\bf nodal moduli space} of all stable nodal holomorphic discs.

Using uniformisation it is convenient to represent the classes $\mathbf u\in\NN$ by
holomorphic maps $u\in\HH^{3,\sigma}(S,j)$ whose disc component has domain
$\big(\D,\rmi,\{1,\rmi,-1\}\big)$ and for which the sphere components are given by
$(\C P^1,\rmi)$, cf.\ Remark \ref{rem:isotropyisfinite}.
If $\bfu$ is un-noded, then we obtain $\bfu=\big[\D,\rmi,\emptyset,\{1,\rmi,-1\},u\big]$.
We abbriviate the elements $\mathbf u=[\rmi,u]\in\NN$ (noded or un-noded)
simply by $[u]$ for the following discussion:

The boundary conditions for $\HH^{3,\sigma}(S,j)$ formulated in Section
\ref{subsec:boundunnodstabdiscmaps} are the boundary conditions used in
Sections \ref{subsec:gromovcomp} and \ref{subsec:thesemiposcase}.
In particular, all properties formulated in the un-noded case for holomorphic discs
in Section \ref{subsec:gromovcomp} continue to hold in the noded case,
hence, for all $\bfu=[u]\in\NN$ in the following sense:

\begin{enumerate}
\item
 The {\bf winding number} of $\bfu=[u]\in\NN$, which by definition is the degree of
 the map $\vartheta\circ u\co\partial S\ra S^1$, is equal to $1$.
 In particular, $u(\partial S)$ is an embedded curve in $N^*$ positively transverse
 to $\xi$ and the restriction of $u$ to the disc component of $S$ is a simple holomorphic
 map.
\item
 The {\bf symplectic energy} $\int_{S}u^*\hat{\Omega}$ of $\bfu=[u]\in\NN$, which is well
 defined and positive by Section \ref{subsec:boundunnodstabdiscmaps}, is uniformly bounded.
\item
 The boundary circle $u(\partial S)$ of $\bfu=[u]\in\NN$ is disjoint from $U_{\partial N}$
 because the restriction of $u$ to the disc component of $S$ must be disjoint from
 $U_{\partial N}$ by Lemma \ref{lem:blockinglemma}.
 If $u(S)$ intersects $U_B$ then $\bfu$ is un-noded and equivalent to a local
 Bishop disc $u_{\varepsilon,b_o}$ by Lemma \ref{lem:semiglobunique} combined with the final
 paragraph of Remark \ref{rem:gromovcomp}.
\end{enumerate}

\noindent
The local Bishop discs $u_{\varepsilon,b_o}$, $\varepsilon\in(0,\delta)$, represent
elements
\[
\bfu_{\varepsilon,b_o}=
\big[\D,\rmi,\emptyset,\{1,\rmi,-1\},u_{\varepsilon,b_o}\big]
\]
in $\NN$.
The corresponding local Bishop filling can be identified with $(0,\delta)$.
We truncate the nodal moduli space $\NN$ via
\[
\NN_{\cut}=\NN\setminus (0,\delta/2)\,.
\]

\begin{rem}
\label{rem:compactimageunderev}
 If there exists a compact subset $K$ of $\hat{W}$ such that $\bfu(S)$ is
 contained in $K$ for all $\bfu\in\NN$, then the Gromov compactification
 of $\MM_{\gamma}$ can be identified with a subset of $\NN$ by taking equivalence
 classes, see \cite{fz15}.
\end{rem}


\subsection{Cauchy--Riemann section\label{subsec:cauriemset}}

The moduli space $\NN$ is the zero set
\[
\NN=\big\{
\bfu\in\ZZ
\,\big|\,
\CR{\!\hat{J}}\mathbf u=\mathbf 0
\big\}
\]
of the Cauchy--Riemann operator $\CR{\!\hat{J}}$, which appears as a section into the bundle
\[
p\co\WW\lra\ZZ
\]
over $\ZZ$.
The fibre of $p$ over $\mathbf u=\big[S,j,D,\{1,\rmi,-1\},u\big]\in\ZZ$ consists of
equivalence classes $\boldxi=\big[S,j,D,\{1,\rmi,-1\},u,\xi\big]$ of continuous sections $\xi$
of $\Hom\big(TS,u^*T\hat{W}\big)$ so that for each $z\in S$ the map
$\xi(z)\co T_zS\ra T_{u(z)}\hat{W}$ is complex anti-linear with respect to $j(z)$ and
$\hat{J}\big(u(z)\big)$.
Moreover, $\xi$ is of Sobolev class $H^2_{\loc}$ on $S\setminus|D|$ and of weighted
Sobolev class $H^{2,\sigma}$ near $|D|$ similarly to the description at the beginning of
Section \ref{subsec:boundunnodstabdiscmaps}, see \cite[Section 1.2]{hwz-gw17}.
Two such sections $\big(S,j,D,\{m_0,m_1,m_2\},u,\xi\big)$ and
$\big(S',j',D',\{m'_0,m'_1,m'_2\},u',\xi'\big)$ are {\bf equivalent}, if there exists an equivalence
$\varphi$ of stable nodal disc maps $\big(S,j,D,\{m_0,m_1,m_2\},u\big)$ and
$\big(S',j',D',\{m'_0,m'_1,m'_2\},u'\big)$ as described at the beginning of Section
\ref{subsec:boundunnodstabdiscs} such that $\xi'\circ T\varphi=\xi$.
By adapting \cite[Theorem 1.9]{hwz-gw17} to the situation of the current Sections
\ref{sec:adelignemumfordtypespace} and \ref{sec:polyfoldperturbations} we obtain a natural
second countable paracompact Hausdorff topology on the total space $\WW$ and the
bundle projection $p\co\WW\ra\ZZ$ that maps $\big[S,j,D,\{1,\rmi,-1\},u,\xi\big]$ to
$\big[S,j,D,\{1,\rmi,-1\},u\big]$ is continuous.
Furthermore $p\co\WW\ra\ZZ$ constitutes a strong polyfold bundle in view of
\cite[Theorem 1.10]{hwz-gw17}.

The {\bf Cauchy--Riemann operator} $\CR{\!\hat{J}}$ is the section of $p$ given by
\[
\CR{\!\hat{J}}\mathbf u
:=
\Big[
  S,j,D,\{1,\rmi,-1\},u,\tfrac12\big(Tu+\hat{J}(u)\circ Tu\circ j\big)
\Big]
\]
for all $\mathbf u=\big[S,j,D,\{1,\rmi,-1\},u\big]\in\ZZ$.
For a representative we write $\CR{\!\hat{J}}u$ also.
As in \cite[Theorem 1.11]{hwz-gw17} the Cauchy--Riemann operator
$\CR{\!\hat{J}}\co\ZZ\ra\WW$ is a scale-smooth component-proper Fredholm section that
admits a natural orientation which we describe in Remark \ref{rem:crisorienbyrelspinstr} below.
The Fredholm index of $\CR{\!\hat{J}}\co\ZZ\ra\WW$ is $1$ by the index computation in
Section \ref{subsec:thesemiposcase} taking local transversal constraints from Section
\ref{subsec:boundunnodstabdiscs} in view of Theorem \ref{thm:rnisaorientedorbifold} into
account.
As in \cite[Section 5.3]{sz17} the vertical differential of a local representation of $\CR{\!\hat{J}}$ near
the local Bishop discs $\bfu_{\varepsilon,b_o}$, $\varepsilon\in(0,\delta)$ has a right-inverse.
The same holds true for all simple stable nodal holomorphic discs in $\ZZ$ due to the generic
choice of $\hat{J}$, see Section \ref{subsec:thesemiposcase}.

\begin{rem}
 \label{rem:homotopunitrivw2=0}
 Preparing the orientation considerations in Remark \ref{rem:crisorienbyrelspinstr} we will
 establish {\bf homotopically unique trivialisations} under the assumption that the second
 Stiefel--Whitney class of $N^*$ vanishes.
 This approach requires to build up the spaces $\HH^{3,\sigma}(S,j)$ with continuous maps on
 $S/D$ {\it homotopic} in $(\hat{W},N^*)$ to a local Bisphop disc, see item (3) in Section
 \ref{subsec:boundunnodstabdiscmaps}.
 As the relative homotopy class is preserved under Gromov convergence (see \cite{fz15})
 this is not a restriction.
 
 Consider the space of continuous maps $(\D,\partial\D)\ra\big(\hat{W},N^*\big)$ sending 
 the marked points $\{1\}$ and $\{\rmi^k\}$ into $\gamma$ and $\vartheta^{-1}(\rmi^k)$, $k=1,2$,
 resp.
 Denote by $\CC$ the connected component of the Bishop disc $u_0=u_{\delta/2,b_o}$.
 We claim that for all $u\in\CC$ the pull back bundle $u^*TN^*$ has a canonical trivialisation.
 
 In order to specify what is meant by this we describe the situation for $u_0$.
 By Section \ref{subsec:agermofbishopdisfill} the base point $u_0$ of $\CC$ is the map
 \[
 (\D,\partial\D)\lra
 \Big(
  (-\infty,0]\times\R\times\C\times T^*B,
  \{0\}\times\{0\}\times\C^*\times B
 \Big)
 \]
 given by
 \[
 u_0(z)=
 \Big(
 \tfrac{\delta^2}{4}\big(|z|^2-1\big),0,\delta\cdot z,b_o
 \Big)
 \;.
 \]
 in the local model $U_B$.
 The embedded path $\gamma$ corresponds to $\{0\}\times\{0\}\times\R^+\times\{b_o\}$
 as oriented curve and the pages $\vartheta^{-1}(\rmi^k)$, $k=1,2$, correspond to
 $\{0\}\times\{0\}\times\R^+\rmi^k\times B$, resp.
 The co-orientation of the pages $\vartheta^{-1}(\rmi^k)$, $k=1,2$, given in Section
 \ref {subsec:legopenbooks} is represented by the normal vectors $(0,0,\rmi^{k+1},0)$, resp.
 The local model defines a local frame
 $\partial_s,\partial_t,\partial_x,\partial_y,\partial_{\bfp},\partial_{\bfq}$ of the tangent bundle
 $T\hat{W}$ near $(-\infty,0]\times\R\times\C\times\{b_o\}$ inducing a trivialisation
 $\Phi\co u_0^*T\hat{W}\ra\D\times\R^{2n}$ of the pull back bundle $u_0^*T\hat{W}$.
 The trivialisation $\Phi$ restricts to a trivialisation $\Phi\co u_0^*TN^*\ra\partial\D\times\R^n$
 of $u_0^*TN^*$, which corresponds to the sub-frame $\partial_x,\partial_y,\partial_{\bfp}$.
 Further, $\Phi$ restricts to isomorphisms $\Phi\co T_{u_0(1)}\gamma\ra\{1\}\times\R$ via
 the vector field $\partial_x$ and
 $\Phi\co T_{u_0(\rmi^k)}\big(\vartheta^{-1}(\rmi^k)\big)\ra\{\rmi^k\}\times\R^{n-1}$, $k=1,2$, via
 the sub-frames $\partial_y/-\partial_x,\partial_{\bfp}$.
 The co-orientations of the pages $\vartheta^{-1}(\rmi^k)$ correspond to
 $-\partial_x/-\partial_y$, resp.
 We remark that $\Phi$ is not a complex trivialisation of the complex bundle pair
 $\big(u_0^*T\hat{W},u_0^*TN^*\big)$ as used to compute the Maslov index to be $2$, see
 \cite[Proposition 8]{nie06}.
 
 Given any $u\in\CC$ we claim that the pull back bundle $u^*TN^*$ admits a homotopically
 unique trivialisation $\Phi_u$ with the properties listed for $\Phi_{u_0}:=\Phi|_{u_0^*TN^*}$.
 To see this let $u_{\tau}$, $\tau\in[0,1]$, be a path in $\CC$ connecting $u_0$ with $u_1=u$ and
 define $U\co[0,1]\times\partial\D\ra\hat{W}$ by $U(\tau,z):=u_{\tau}(z)$.
 By \cite[Corollary 3.4.5]{hus94} there exists a trivialisation
 $\Phi_U\co U^*TN^*\ra\big([0,1]\times\partial\D\big)\times\R^n$ that extends $\Phi_{u_0}$.
 As above we denote the coordinates of $\R^n$ by $(x,y,\bfp)$.
 We can assume that $\Phi_U$ restricts to isomorphisms
 $\Phi_U\co\big(U(\,.\,,1)\big)^*T\gamma\ra\big([0,1]\times\{1\}\big)\times\R$ with $\R$ provided
 with the coordinate $x$ as well as
 $\Phi_U\co\big(U(\,.\,,\rmi^k)\big)^*T\big(\vartheta^{-1}(\rmi^k)\big)\ra
 \big([0,1]\times\{\rmi^k\}\big)\times\R^{n-1}$, $k=1,2$, with $\R^{n-1}$ provided
 with coordinates $(y/x,\bfp)$ and co-oriantations $-\partial_x/-\partial_y$, resp.
 The claimed trivialisation $\Phi_u$ is $\Phi_{u_1}=\Phi_U|_{\{1\}\times\partial\D}$.
 
 It remains to show homotopic uniqueness of $\Phi_u$, i.e.\ that $\Phi_u$ is independent of the
 chosen path $u_{\tau}$ up to homotopy:
 We consider a loop $u_{\tau}$ in $\CC$ for $\tau\in T^1=\R/2\Z$ extending a path $u_{\tau}$,
 $\tau\in[0,1]$, in $\CC$ as above and define $\hat{U}\co T^1\times\partial\D\ra\hat{W}$ by
 $\hat{U}(\tau,z):=u_{\tau}(z)$.
 The claim will follow by constructing a trivialisation $\Phi_{\hat{U}}$ that shares the triviality
 properties established for $\Phi_U$.
 
 Restricted to $[0,1]\times\partial\D$ we define $\Phi_{\hat{U}}$ to be equal to $\Phi_U$.
 As $\hat{U}(T^1\times\{1\})$ is a subset of the embedded interval
 $\gamma=[0,1]$ and the tangent bundle $TN^*$ is trivialised by
 $\partial_x,\partial_y,\partial_{\bfp}$ along $\gamma\cap U_B$ there exists by
 \cite[Corollary 3.4.8]{hus94} a trivialisation
 $\Phi_{\gamma}\co  T_{\gamma}N^*\ra[0,1]\times\R^n$ that extends the canonical trivialisation
 over $\gamma\cap U_B$ such that $(x,y,\bfp)$ are coordinates on $\R^n$ and such that
 $\Phi_{\gamma}\co T\gamma\ra[0,1]\times\R$ is provided with the fibre coordinate $x$.
 
 Gluing the trivialisations $\Phi_{\hat{U}}$ and $\Phi_{\gamma}$ via the identity along the overlap
 we obtain a trivialisation (still denoted by) $\Phi_{\hat{U}}$ over
 $\big([0,1]\times\partial\D\big)\cup\big(T^1\times\{1\}\big)$.
 In other words, $\Phi_{\hat{U}}$ trivialises $\hat{U}^*TN^*$ over the boundary of the $2$-disc
 \[
 \big(T^1\times\partial\D\big)\setminus
 \Big(\big([0,1]\times\partial\D\big)\cup\big(T^1\times\{1\}\big)\Big)
 \,.
 \]
 By the assumption that the second Stiefel--Whitney class of $N^*$ vanishes this trivialisation
 extends to a trivialisation of $\hat{U}^*TN^*$, see \cite[p.~75 and Section 3.3]{hat17}.
 Hence, $\Phi_{\hat{U}}\co \hat{U}^*TN^*\ra\big(T^1\times\partial\D\big)\times\R^n$ is a
 trivialisation with fibre coordinates $(x,y,\bfp)$.
 By construction we have a trivialisation
 $\Phi_{\hat{U}}\co\big(\hat{U}(\,.\,,1)\big)^*T\gamma\ra\big(T^1\times\{1\}\big)\times\R$
 with fibre coordinate $x$.
 Further, because a co-oriented linear subspace of $\R^n$ of codimension $1$ is determined
 by the normal vector and $S^{n-1}$, $n\geq3$, is simply connected we can assume that we
 have trivialisations $\Phi_{\hat{U}}\co\big(\hat{U}(\,.\,,\rmi^k)\big)^*
 T\big(\vartheta^{-1}(\rmi^k)\big)\ra\big(T^1\times\{\rmi^k\}\big)\times\R^{n-1}$, $k=1,2$, with
 fibre coordinates $(y/x,\bfp)$ and co-oriantations $-\partial_x/-\partial_y$, resp.
 
 Consequently, $u_1^*TN^*$ shares the same triviality properties as $u_0$ independently of the
 chosen path $u_t$ such that $\Phi_{u_1}$ is homotopically unique as claimed.
\end{rem}

\begin{rem}
 \label{rem:homotopunitrivrelspin}
 If the second Stiefel--Whitney class $w_2(TN^*)$ of $N^*$ is not trivial a variant of Remark
 \ref{rem:homotopunitrivw2=0} gives {\bf homotopically unique stable trivialisations}
 assuming $N^*$ to be orientable and that $w_2(TN^*)$ lifts to a class in $H^2(\hat{W};\Z_2)$.
 
 Following \cite[Chapter 8.1]{fooo09b} we choose a triangulation of $\hat{W}$ such that $N$ will
 be a subcomplex and $B\cup\partial N$ a subcomplex of $N$.
 The assumptions made allow the choice of a {\bf relative spin structure} on $(\hat{W},N^*)$
 which is a choice of orientation on $N^*$, an oriented vector bundle $V$ over the $3$-skeleton
 $\hat{W}_{[3]}$ of $\hat{W}$ such that $w_2(V)$ restricts to $w_2(TN^*)$, and a spin structure
 on the vector bundle $TN^*\oplus V$ over the $2$-skeleton $N^*_{[2]}$ of $N^*$.
 Such a choice of a spin structure is possible because $w_2$ of $TN^*\oplus V$ over
 $N^*_{[2]}$ vanishes, see \cite{bh58}.
 
 As in Remark \ref{rem:homotopunitrivw2=0} we consider the space of continuous maps
 $(\D,\partial\D)\ra\big(\hat{W},N^*\big)$ that map $\{1\}$ and $\{\rmi^k\}$ into $\gamma$ and
 $\vartheta^{-1}(\rmi^k)$, $k=1,2$, resp.
 By simplicial approximation (see \cite[Theorem IV.22.10]{bred93})
 we can replace all maps and homotopies of maps by simplicial
 representatives $u$ and $u_t$ up to homotopy.
 Therefore, the proof of \cite[Theorem 8.1.1]{fooo09b} yields homotopically unique trivialisations
 of $u^*(TN^*\oplus V)$ and $u^*V$.
 Similarly to Remark \ref{rem:homotopunitrivw2=0} we can achieve that $T\gamma$ and
 $T\big(\vartheta^{-1}(\rmi^k)\big)$, $k=1,2$, correspond to $\{1\}\times\R$ and
 $\{\rmi^k\}\times\R^{n-1}$, resp., in the trivialisation $\D\times\R^{n+v}$ of $u^*(TN^*\oplus V)$,
 where $v$ denotes the rank of the vector bundle $V$.
 Moreover, by possibly changing the spin structure on $TN^*\oplus V$ over $N^*_{[2]}$ we can
 assume that the obtained trivialisation of $u_0^*(TN^*\oplus V)$ for the Bishop disc $u_0$ is
 homotopic to the canonical one induced by $\Phi_{u_0}$, see Remark
 \ref{rem:homotopunitrivw2=0}.
\end{rem}

\begin{rem}
 \label{rem:crisorienbyrelspinstr}
 The canonical trivialisations of the involved pull back bundles in Remark
 \ref{rem:homotopunitrivw2=0} and \ref{rem:homotopunitrivrelspin} orient the
 Cauchy--Riemann section $\CR{\!\hat{J}}\co\ZZ\ra\WW$ in a natural way.
 This is based on \cite[Lemma 8.1.4]{fooo09b}.
 
 Namely, given a complex bundle pair $(E,F)$ over $(\D,\partial\D)$ such that the real
 sub-bundle $F$ is trivial over $\partial\D$ each trivialisation orients the associated linear
 Cauchy--Riemann operator.
 The complexification of the trivialisation extends to a complex trivialisation of $E$ over an
 annulus neighbourhood of $\partial\D$.
 Collapsing the inner boundary component of a slightly smaller annulus neighbourhood of
 $\partial\D$ yields a complex bundle pair over a one-noded disc.
 Over the sphere component the Cauchy--Riemann operator admits the complex orientation,
 which is canonical.
 Over the disc component the Cauchy--Riemann operator is onto with kernel consisting of
 constant sections.
 Hence, the kernel is isomorphic to an Euclidean space canonically,
 so that the Cauchy--Riemann operator is canonically oriented over the disc component.
 Incorporating the matching condition of the bundles over the two components the functoriality
 properties of the determinant line bundle canonically determine an orientation
 of the Cauchy--Riemann operator on $(E,F)$, see \cite[Lemma 8.1.4]{fooo09b} and cf.\ 
 \cite[Section 5.10]{hwz-gw17}.
 
 Observe that this construction is compatible with point-wise boundary conditions and also
 allows to begin with a complex bundle pair $(E,F)$ with matching conditions over a noded disc.
 
 In order to orient the linearised Cauchy--Riemann operator at an un-noded element $u$ of
 $\HH^{3,\sigma}(S,j)$ apply the above construction to the complex bundle pair
 $\big(u^*T\hat{W},u^*TN^*\big)$ in the context of Remark \ref{rem:homotopunitrivw2=0}
 (restricting to the connected component of discs homotopic to a local Bishop disc), and to
 the complex bundle pairs $\big(u^*(T\hat{W}\oplus V_{\C}),u^*(TN^*\oplus V)\big)$ and
 $(u^*V_{\C},u^*V)$, where $V_{\C}:=V\otimes\C$, in the context of Remark
 \ref{rem:homotopunitrivrelspin}, resp.
 For the latter use the arguments in the proof of \cite[Theorem 8.1.1]{fooo09b} and the
 observation that the noded discs in $\HH^{3,\sigma}(S,j)$ are at least of codimension $2$.
 In fact, we obtain canonical orientations of the linearised Cauchy--Riemann operator at
 noded elements of $\HH^{3,\sigma}(S,j)$ also with the above construction.
 
 With the proceeding remarks a canonical orientation of the Cauchy--Riemann section
 $\CR{\!\hat{J}}\co\ZZ\ra\WW$ is obtained as in \cite[Section 5.11]{hwz-gw17}.
 Simply, replace the {\it complex orientation} of the sphere case by the canonical orientation
 induced by boundary trivialisations of pull back bundles in the arguments of
 \cite[Section 5.11]{hwz-gw17}.
 Furthermore observe that preservation of orientations of the partial Kodaira differentials on
 the Riemann moduli spaces is ensured by Theorem \ref{thm:rnisaorientedorbifold},
 $\ZZ_{\tau}$ is at least of codimension $2$ for all non-trivial nodal types $\tau$ by
 Proposition \ref{prop:rtauisacomplexorbifold}, automorphisms of nodal discs in $\ZZ$
 restrict to the identity on the disc component as well as that we can collapse the interior
 boundary component of a small collar annulus in \cite[Lemma 8.1.4]{fooo09b} such that
 auxiliary marked points are contained on the resulting sphere components exclusively.
\end{rem}

\begin{proof}[{\bf Proof of Theorem \ref{thm:maindirectsyplcoborthm} part (ii)}]
  We place ourselves in to the situation of Section \ref{subsec:complthecob} and
  \ref{subsec:thesemiposcase}; but this time we do not assume semi-positivity as
  in Theorem \ref{thm:maindirectsyplcoborthm} part (i).
  Instead, we assume the vanishing of $w_2(TN^*)$ or the relative spin condition as formulated
  in Theorem \ref{thm:maindirectsyplcoborthm} part (ii) so that Remark
  \ref{rem:crisorienbyrelspinstr} applies.
  The aim is to derive a contradiction to the existence of a compact subset $K$ of $\hat{W}$
  such that $u(S)\subset K$ for all $\bfu=\big[S,j,D,\{1,\rmi,-1\},u\big]\in\NN$.
  Theorem \ref{thm:maindirectsyplcoborthm} part (ii) will then follow as in the proof of part (i).
  
  We argue by contradiction assuming that such a compact subset $K$ as above exists.
  The arguments form Remark \ref{rem:gromovcomp} under the assumed $C^0$-bounds on
  $\NN$ combined with Section \ref {subsec:modulisp} show compactness of $\NN_{\cut}$,
  see Remark \ref{rem:compactimageunderev}.
  Let $W_{\!K}$ be a relative compact open neighbourhood of $K$ in $\hat{W}$.
  Using Sobolev embedding we choose a neighborhood $\UU\subset\ZZ$ of
  $\NN_{\cut}$ such that $u(S)\subset W_{\!K}\setminus\big(U_B'\cup U_{\partial N}\big)$ for all
  $\bfu=\big[S,j,D,\{1,\rmi,-1\},u\big]$ in $\UU$, where $U_B'\subset U_B$ is defined as $U_B$
  but with $\delta$ replaced by $\delta/2$ in the proof of Lemma \ref{lem:semiglobunique}.
  
  Let $\lambda\co\WW\ra\Q\cap [0,\infty)$ be a {\bf scale$^+$-multisection} of $p\co\WW\ra\ZZ$,
  i.e.\ $\lambda$ is a groupoidal functor which in a local presentation is given by finitely many
  weighted local scale$^+$-sections $(s_i,w_i)$, $w_i\in\Q\cap(0,\infty)$, of total weight
  $\sum w_i=1$ such that $\lambda(\boldxi)$ is the sum of those weights $w_i$ for which the
  corresponding sections $s_i$ satisfy $s_i\big(p(\boldxi)\big)=\boldxi$; we set
  $\lambda(\boldxi)=0$ if there is no such section among the $s_i$, cf.\
  \cite[Definition 3.34]{hwz-III09}.
  The {\bf support} of $\lambda$ is the smallest closed set in $\ZZ$ outside which $\lambda$
  is trivial in the sense that $\lambda(0_{\bfu})=1$ for these $\bfu\in\ZZ$, see
  \cite[Definition 3.35]{hwz-III09}.
  The {\bf solution set}
  \[
  \SS=\Big\{\bfu\in\ZZ\;\Big|\;\lambda\big(\CR{\!\hat{J}}\bfu\big)>0\Big\}
  \]
  of the pair $\big(\CR{\!\hat{J}},\lambda\big)$ is the set of all
  $\bfu=\big[S,j,D,\{1,\rmi,-1\},u\big]\in\ZZ$ for which in a local presentation of $\lambda$ there
  exist at least one $s_i$ such that $\CR{\!\hat{J}}u=s_i(u)$ and
  $\lambda\big(\CR{\!\hat{J}}\bfu\big)$ is the sum of all the weights $w_i$ for which the
  corresponding $s_i$ satisfy such an equation.
  The solution set $\SS$ is equipped with the {\bf weight function}
  \[
  \lambda_{\CR{\!\hat{J}}}\co\ZZ\lra\Q\cap(0,\infty)
  \,,
  \quad
  \bfu\longmapsto\lambda\big(\CR{\!\hat{J}}\bfu\big)
  \,,
  \]
  see \cite[Section 4.3]{hwz-III09}.
  
  With \cite[Theorem 4.17]{hwz-III09} we choose $\lambda$ such that the support of $\lambda$
  is contained in $\UU$ and that $\big(\CR{\!\hat{J}},\lambda\big)$ is {\bf transverse}.
  The latter means that the vertical differentials
  \[\big(\CR{\!\hat{J}}\big)'(u)-s'_i(u)\]
  of local presentations $\CR{\!\hat{J}}u$ of $\CR{\!\hat{J}}\bfu$ and $s_i$ of $\lambda$ are
  surjective for all $\bfu\in\SS$ and for all (the finitely many) $i$, see
  \cite[Definition 4.7(1)]{hwz-III09}.
  If $\big(\CR{\!\hat{J}}\big)'(u)$ is onto for an un-noded $\bfu\in\NN$, which is representable by
  a necessarily simple holomorphic disc map, we choose $\lambda$ to be a single local section
  $s_1$ that is identically $0$ in a neighbourhood of $\bfu$ in $\ZZ$.
  This is possible in view of the proof of \cite[Theorem 4.17]{hwz-III09}.
  In particular, $\lambda$ is trivial over those $\bfu$.
  As observed right before Remark \ref{rem:homotopunitrivw2=0} this applies to all local Bishop
  discs $\bfu_{\varepsilon,b_o}$, $\varepsilon\in[\delta/2,\delta)$, so that $\lambda$ is trivial
  over all local Bishop discs $\bfu_{\varepsilon,b_o}$.
  Consequently, the truncated solution set
  \[
  \SS_{\cut}=\SS\setminus(0,\delta/2)
  \]
  of $\big(\CR{\!\hat{J}},\lambda\big)$ is a $1$-dimensional oriented compact branched
  {\it suborbifold} with boundary $\partial\SS_{\cut}$ given by the single Bishop disc
  $\bfu_{\delta/2,b_o}$, see \cite[Theorem 4.17]{hwz-III09} or \cite[Section 1.4]{hwz-gw17}.
  A collar neighbourhood of $\partial\SS_{\cut}$ in $\SS_{\cut}$ is equal to a collar
  neighbourhood of $\partial\NN_{\cut}$ in $\NN_{\cut}$ given by the local Bishop discs
  $\bfu_{\varepsilon,b_o}$, $\varepsilon\in[\delta/2,\delta)$.
  
  Furthermore observe that by compactness of $\SS_{\cut}$
  the intersection $\SS_{\cut}\cap\ZZ_{\tau}$
  is not empty only for finitely many nodal types $\tau$.
  Therefore,
  we choose $\big(\CR{\!\hat{J}},\lambda\big)$ to be transverse along the subpolyfolds
  $\ZZ_{\tau}$ for these nodal types $\tau$ turning the subsets $\SS_{\cut}\cap\ZZ_{\tau}$
  into suborbifolds of $\SS$.
  As the codimensions will be at least $2$ whenever the nodal type $\tau$ is non-trivial,
  the resulting suborbifolds $\SS_{\cut}\cap\ZZ_{\tau}$ have negative dimension,
  hence, are empty.
  Therefore, all elements in $\SS_{\cut}$ are un-noded
  and have trivial isotropy as they can be represented
  by un-noded stable nodal disc maps with trivial automorphism group.
  In other words,
  $\SS_{\cut}$ is a $1$-dimensional oriented compact branched {\it manifold} with precisely
  one boundary point, which has weight $1$.
  This contradicts the fact that by \cite[Lemma 5.11]{sal99}
  the oriented sum of the weights taken over
  all boundary points vanishes.
\end{proof}

\begin{rem}
  \label{rem:alternativeargument}
  We give an alternative argument to obtain a contradiction which does not use the classification of
  $1$-dimensional oriented compact branched manifolds with boundary given in \cite[Section 5.4]{sal99}:
  We identify $\gamma$ with the interval $[0,3\delta]$ such that $(0,\delta)$ corresponds to the local
  Bishop family and $[2\delta,3\delta]$ is not contained in the image of the evaluation map
  $\ev\co\SS\ra\gamma$ that evaluates $\bfu$ at the first boundary marked point $1$.
  Let $f$ be a smooth function on $[0,3\delta]$ with support in $(\delta/2,\delta)$ such that
  $\int_0^{3\delta}f(x)\rmd x=1$.
  Because $\ev$ restricts to a degree $1$ map on the local Bishop discs,
  \[
  \int_{(\SS_{\cut},\lambda _{\DB})}\ev^*(f\rmd x)=1
  \]
  writing $\lambda_{\DB}$ for the weight function $\lambda_{\CR{\!\hat{J}}}$.
  Denote by $f_1$ the function obtained from $f$ by shifting $f$ by $2\delta$ and observe that the
  closed $1$-form $(f-f_1)\rmd x$ has a primitive $g(x)=\int_0^x\big(f(t)-f_1(t)\big)\rmd t$
  with support in $(\delta/2,3\delta)$.
  Hence, $\ev^*\big((f-f_1)\rmd x\big)$ has primitive $\ev^*g$ and
  \[
  \int_{(\SS_{\cut},\lambda _{\DB})}\ev^*(f_1\rmd x)=0
  \]
  as the support of $f_1$ is contained in $(5\delta/2,3\delta)$.
  With Stokes theorem \cite[Theorem 1.27]{hwz-int10} for weighted integrals
  \[
  1=\int_{(\SS_{\cut},\lambda _{\DB})}\ev^*\big((f-f_1)\rmd x\big)=
  \int_{(\partial\SS_{\cut},\lambda _{\DB})}\ev^*g=
  g(\delta/2)\,.
  \]
  As $g(\delta/2)=0$ we reach the desired contradiction.
\end{rem}


\begin{ack}
  Part of the research of this article was initiated while S.S.\ and K.Z.'s stay at the
  {\it Institut for Advanced Studies} Princeton.
  We would like to thank Helmut Hofer for the invitation and for introducing us to polyfolds.
  K.Z. would like to thank the {\it Mathematisches Forschungsinstitut Oberwolfach} for the support.
  We would like to thank Katrin Wehrheim for her constant support and motivation.
  We thank Peter Albers, Matthew Strom Borman, Joel Fish, Hansj\"org Geiges,
  Stefan Nemirovski, and Sven Pr\"ufer
  for enlightening discussions.
\end{ack}

\begin{coi}
  Not Applicable
\end{coi}



\end{document}